\documentclass[12pt,reqno]{amsart}
\usepackage[headings]{fullpage}

\usepackage{amssymb,amsmath,amscd}

\usepackage{bbm}
\usepackage{tikz}
\usepackage{url}
\usepackage[bookmarks=true,%
colorlinks=true,%
linkcolor=blue,%
citecolor=blue,%
filecolor=blue,%
menucolor=blue,%
urlcolor=blue,%
breaklinks=true]{hyperref}

\usepackage{verbatim}
\usepackage{mathtools}
\usepackage[normalem]{ulem}

\graphicspath{{figures/}}

\usepackage{mathtools} 

\usepackage{multirow}

\usepackage{amsfonts}

\usepackage{float}

\usepackage{enumerate}

\usepackage{array}

\usepackage{abbrCKKY2021}

\newtheorem{thm}{Theorem}[section]

\theoremstyle{definition}
\newtheorem{prop}[thm]{Proposition}
\newtheorem{lem}[thm]{Lemma}

\newtheorem{cor}[thm]{Corollary}
\newtheorem{defn}[thm]{Definition}
\newtheorem{exam}[thm]{Example}
\newtheorem{rmk}[thm]{Remark}

\newtheorem{conj}{Conjecture}

\newtheorem{nota}{Notation}

\usepackage{setspace}
\setstretch{1.05}

\usepackage[foot]{amsaddr}  
\makeatletter
\renewcommand{\email}[2][]{%
	\ifx\emails\@empty\relax\else{\g@addto@macro\emails{,\space}}\fi%
	\@ifnotempty{#1}{\g@addto@macro\emails{\textrm{(#1)}\space}}%
	\g@addto@macro\emails{#2}%
}
\makeatother

\newcommand\restr[2]{{
  \left.\kern-\nulldelimiterspace 
  #1 
  \vphantom{\big|} 
  \right|_{#2} 
  }}

  \allowdisplaybreaks

\begin{document}

\title{Parabolic representations and generalized Riley polynomials}

\date{\today}

\author[Y.Cho]{Yunhi Cho}
\address[Y.Cho]{Department of Mathematics, University of Seoul, Seoul, 02504, Korea (Korea Institute for Advance Study, Seoul, 02455, Korea)}
\email[Y.Cho]{yhcho@uos.ac.kr}

\author[H.Kim]{Hyuk Kim}
\address[H.Kim]{Department of Mathematical Sciences, Seoul National University, Seoul, 08826, Korea}
\email[H.Kim]{hyukkim@snu.ac.kr}

\author[S.Kim]{Seonhwa Kim} 
\address[S.Kim]{Department of Mathematics, Natural Science Research Institute, University of Seoul, 02504, Seoul, Korea}
\email[S.Kim]{seonhwa17kim@gmail.com}

\author[S.Yoon]{Seokbeom Yoon}
\address[S.Yoon]{Departament de Matem\`atiques, Universitat Aut\`onoma de Barcelona, 08193 Cerdanyola del Vall\`es, Spain}
\email[S.Yoon]{sbyoon15@gmail.com}
	
	\date{\today}

\begin{abstract}
	We generalize R. Riley's study about parabolic representations of two bridge knot groups to the general knots in $S^3$.
We utilize the parabolic quandle method for general knot diagrams and adopt symplectic quandle for  better investigation, which gives such representations and their complex volumes explicitly.
For any knot diagram with a specified crossing $\cc$, we define a generalized Riley polynomial $\Riley_\cc(y) \in \Qbb[y]$ 
whose roots correspond to the conjugacy classes of parabolic representations of the knot group.
The \emph{sign-type} of parabolic quandle is newly introduced and we obtain a formula for the obstruction class to lift to a boundary unipotent $\sl$-representation. Moreover, we  define another polynomial $g_\cc(u)\in\Qbb[u]$, called $u$-polynomial, and prove that $\Riley_\cc(\u^2)=\pm g_\cc(\u)g_\cc(-\u)$. 
Based on this result, we introduce and investigate \emph{Riley field} and \emph{$u$-field} which are closely related to the invariant trace field. 
This method eventually leads to the complete classification of parabolic representations of knot groups along with their complex volumes and cusp shapes up to 12 crossings. 

\end{abstract}


\maketitle

{\footnotesize
	\tableofcontents
}


	\section{Introduction}\label{sec:introduction}

\subsection{Background on parabolic representations}


Parabolic representations of two-bridge links were intensively studied by R. Riley in the 1970s \cite{riley_parabolic_1972, riley_parabolic_1975} and his far-reaching computations  produced  lots of discrete faithful representations of knot groups, which contributed to the earlier development of 3-dimensional geometry and topology by W. Thurston \cite{thurston_geometry_1977,riley_personal_2013}. 
A \emph{parabolic representation} of a knot group is an $\sl$ or $\psl$ representation which sends a meridian to a parabolic element.
The parabolicity condition is algebraically the same  as the completeness condition of Thurston gluing equations for a complete (pseudo-) hyperbolic structure.


One advantage of boundary-parabolic representations is that 
 the complex volume is well-defined
\cite{neumann_extended_2004,zickert_volume_2009}. The Chern-Simons invariant for 
$\sl$-representations has a subtle ambiguity to be defined and requires additional data about a logarithmic branch in the boundary \cite{marche_geometric_2012} although the hyperbolic volume is defined regardless of the boundary condition \cite{francaviglia_hyperbolic_2004}.

 Parabolic representations naturally produce  elements in the extended Bloch group  coming from the image of the fundamental class of knot exterior, and hence the volumes of parabolic representations would be related to Neumann's conjecture \cite{neumann_realizing_2011}. 
Arithmetic invariants like the invariant trace field and quaternion algebras \cite{maclachlan_arithmetic_2003}  can be also defined for any parabolic representation just as for the holonomy of a complete hyperbolic manifold of finite volume.
These arithmetic invariants have been well-studied for the past several decades, in particular, with the aid of  computer software like SnapPy. 
Our work is an extension of these previous studies of the holonomy representation  to general parabolic representations and 
provides a diagrammatic framework that allows us various examinations and computations directly related to a knot diagram.

On the other hand,
parabolic representations naturally appear in the study of the volume conjecture \cite{murakami_volume_2018}.
An optimistic limit of a colored Jones polynomial (resp.  Kashaev invariant) is a critical value of a potential function  $W(w_1,\dots,w_{N+2})$ (resp. $V(z_1,\dots,z_{2N})$ ) made up of dilogarithm function $\Li$, where $w_i$ (resp. $z_i$) $\in \Cbb\setminus\{0\}$ and $N$ is the number of crossings in the diagram. Here, parabolic representations are given  as the critical points of $W(w)$ (resp.  $V(z)$) since they are  solutions to Thurston gluing equation of the corresponding ideal triangulation called \emph{octahedral decomposition}. 

 A remarkable point is that  the diagrammatic formulas for the explicit computation of parabolic representations as well as   their hyperbolic invariants can be derived in this setting.
For example, 
in \cite{cho_optimistic_2013,cho_optimistic_2014, kim_octahedral_2018}, complex volume, cusp shape, and representation matrices are obtained in terms of $z$- or $w$-variables suggesting the direct connection to quantum invariants. 
Cho and Murakami \cite{cho_optimistic_2013, cho_optimistic_2016, cho_quandle_2018}  express these $w$-variables or $z$-variables 
by using  parabolic quandle for vector coloring introduced by A. Inoue and Y.Kabaya \cite{inoue_quandle_2013}.
Moreover, Ptolemy coordinates \cite{zickert_volume_2009} of octahedral decomposition
for parabolic representations can also be described by these $z$- or $w$-variables \cite{kim_octahedral_2019}. 
 An interesting recent result of C. McPhail-Snyder \cite{mcphail-snyder_hyperbolic_2022} is that  Kashaev-Reshetikhin's quantum holonomy invariant can be written in terms of this $z$- and $w$- variables. 
The $u$-variable introduced in this paper is equivalent to such $z$- or $w$- variables which are in general much harder to compute than $u$-vaiable.

One of the themes in this paper is to study further about these previous works and the classification of the parabolic representation and their invariants by focusing on the parabolic quandle using $u$-variable. Note that this setting  suggests a practical framework to obtain a complete list of the conjugacy classes of
parabolic representations. The parition funtions in Chern-Simons theory usually involve a complete set of gauge equivalence classes of flat connections  by stationary phase approximation of the path integral, and finding all the parabolic representations could be essential.

\subsection{Parabolic quandles and Riley polynomials}
The system of parabolic quandle equations in \cite{inoue_quandle_2013,jo_symplectic_2020} is essentially equivalent to that of the Wirtinger relation, but is simpler and has a different flavor.
 It may be more tractable and interesting as we can reduce it to a symplectic quandle. 
From this consideration, Jo and Kim have employed such an idea for the case of two bridge links \cite{jo_symplectic_2020, jo_continuant_2022}. 
They not only reproduce  Riley's results but also  find some new algebraic properties of the Riley polynomial. In particular,  in \cite[Theorem 5.1]{jo_symplectic_2020} they showed that  the Riley polynomial admits a \emph{square-splitting} property, that is, $\Riley(\u^2)=\pm g(\u)g(-\u)$ for some $g \in \Zbb[\u]$.
One of the motivations of this paper is to answer the question of whether the square-splitting property  holds for general knots not just restricted to two-bridge knots. 

To do this, we first introduce a generalization of  Riley polynomial to non-two-bridge cases. We observe that the roots of Riley polynomial $\Riley(\y)$ are nothing but a trace of $\rho(\meri_1 \meri_2)$ where $\rho$ is a parabolic representation and $\meri_1$ and $\meri_2$ are the two meridians at the  top-most( or bottom-most) crossing in Conway normal form. Riley proved that $\Riley(\y)$ has distinct roots and hence there is a natural bijection between the set of roots of $\Riley(\y)$ and the conjugacy classes of parabolic representations. 

A two bridge diagram (of Conway form) has a kind of canonical choice of crossing i.e., the bottom-most or the top-most crossing. However,   there is no canonical choice of a crossing in general link diagram  and hence we just consider a knot diagram $D_c$ with a specified \emph{base crossing} $c$.
Then we  define \emph{generalized Riley polynomial} $\Riley_c(\y)$ for $D_c$ to be a polynomial whose roots are the trace of $\rho(\meri_i \meri_j)$ where $\meri_i$ and $\meri_j$ are two arcs at the based crossing $c$. 

An important observation is that each root of $\Riley_c(\y)$ is the square of the determinant of two parabolic quandle coloring vectors at the crossing $c$. It naturally follows that $\Riley_\cc(\u^2)$ can be written  in the form of $g_\cc(\u)g_\cc(-\u)$. Thus, the natural question arises whether  
$\Riley_\cc(\u^2)$ splits in $\Qbb[\u]$.  It turns out that $g(\u)$ has rational coefficients only for knots  and we can check this does not hold for link examples (see Section \ref{sec:whiteheadlink}). 

\subsection{Sign-types and obstruction classes}
To clarify when $g_\cc(\u)$ has rational coefficients, we study a more precise relation between the system of parabolic quandle equations and parabolic representations. 
The natural correspondence between parabolic quandles and  Wirtinger generators has a certain sign-ambiguity, but 
we can obtain an unambiguous solution to the quandle equation by introducing a \emph{sign-type} as follows. 

Let  the sign of each coloring vector be fixed. Then we define a \emph{sign-type} $\ee$ of the parabolic quandle that is  a sign-assignment at each crossing, i.e.,
\[
\ee:\{\text{crossings in $\D$} \} \longrightarrow \{\pm1\},
\]
which is  the sign of the quandle equation at each crossing in Definition \ref{def:signedrel}.
Then, we obtain  that 
 the product of $\ee(i)$   over all crossings  doesn't depend on the choice of the sign-type (Proposition \ref{prop:totalsignconstant}).
  It would be natural to ask what the unchanged value is. We prove that it coincides with \emph{obstruction class} $\ob(\rho)$ of the associated representation $\rho$ in Theorem \ref{thm:obs} as follows,
$$\ob(\rho)=\prod\limits_{c\in\text{crossings}} \ee(c) = \text{the eigen value of a longitude}. $$  
In other words, it determines whether $\rho$ (or a lifting of $\psl$-representation $\rho$) is a boundary-unipotent $\sl$-representation or not.

Remark that  $9_{29}$  admits a parabolic representation of positive obstruction. See the computation in Section \ref{sec:9_29}. As looking at the datasets of \cite{curveproject} and  \cite{diagramsite}, we can observe that  almost all parabolic representations  have negative obstruction. It turned out that the geometric  representation must have negative obstruction \cite{calegari_real_2006}.

\subsection{$\u$-polynomials and their properties}
Furthermore,
if we fix any sign-type $\ee$ for a knot diagram then we have an exact bijection between the conjugacy classes of parabolic representations and the solution of parabolic quandle equation with the obstruction $\ob$. Based on this setting,  we can prove that $g_c(\u)$ has  $\Qbb$-coefficients by  elementary Galois theory and we call $g_\cc(u)$  \emph{$\u$-polynomial}. 
As an immediate corollary, we prove  the square-splitting property of generalized Riley polynomial, i.e. $\Riley_c(\u^2)=\pm g_c(\u)g_c(-\u)$ with $g(\u)\in \Qbb[\u]$.  

We remark that the $\u$-polynomial has more information than Riley polynomial especially when the generalized Riley polynomial is reducible. 
Let us suppose $\Riley(\y)=\Riley_1(\y)\Riley_2(\y)$, then we have the corresponding $g(\u)=g_1(\u)g_2(\u)$ with $\Riley_i(\u^2)=\pm g_i(\u) g_i(-\u)$ for $i=1,2$. But the choice of individual $g_i(\u)$ or $\pm g_i(-\u)$ for each $i$ cannot be chosen  arbitrarily since  all factors of $g_i(\u)$ are affected   simultaneously as we change the sign-type. Hence we have a subtle ``sign-coupling'' phenomenon interrelating irreducible factors of $g(\u)$. At the moment the meaning of this coupling is not clear but this is quite remarkable since the interrelation between all irreducible components of a  character variety is rare, while 
many known properties
about a character variety are essentially related to a single irreducible component.  
It would be interesting if the sign-coupling phenomenon can be revealed in a further study.
Remark that  another example interrelating all irreducible components of character variety is the vanishing property of adjoint Reidemeister torsion. See  \cite{gang_adjoint_2019},\cite{tran_adjoint_2021}.

\subsection{Riley field, $u$-field, and trace field}
Instead of choosing a base crossing $\cc$,
a Riley polynomial $\Riley_\ij(y)$ (resp. $u$-polynomial
$g_\ij(u)$) can be naturally considered for a pair of arcs $\arc_i$ and $\arc_j$ as well.
Similarly, each root of $\Riley_\ij(y)$ (resp. $g_\ij(u)$) corresponds to a parabolic representation $\rho$ and hence we define \emph{Riley field}  (resp. \emph{$u$-field} ) of $\rho$  as the number field $\yfield$ (resp. $\ufield$) generated by the corresponding $y_\ij$'s (resp. $u_\ij$'s) over all pairs of arcs.

At first glance, the Riley field seems to be a more natural object than the $u$-field since the generators are the traces of all words of length two. An obvious fact is that $\yfield$ is a subfield of  $\trfield$, but it seems to be difficult to see whether it is a knot invariant or how it is related to  the trace field $\trfield$. 
However, in contrast to the Riley field, it can be naturally proved that  $\ufield$ is a knot invariant and $\ufield \supset \trfield$ (see Theorem \ref{thm:invufield} and Theorem \ref{thm:inclusionFields}.). 
Therefore, the $u$-field seems to be a better object to study than the Riley field. 

Based on a lot of computer experiments, we conjecture that the $u$-field is always the same as the Riley field (see Conjecture \ref{conj:ufield=trfield} and Conjecture \ref{conj:yfield=ufield}). If the conjecture is true, then the Riley fields, $u$-field, and the trace field, all three are identical and in particular, the trace field is generated by only the words of length two, which is a special property of parabolic representation of a knot group. It could be quite surprising because  the trace relation usually requires the word of length three.

\subsection{Homomorphisms between knot groups} 
An interesting application is that
Riley polynomial gives a necessary condition to admit a homomorphism $\phi:\Gk \to \Gk' $ such that $\phi(\alpha_\cc)=\alpha_{\cc'}$  for a crossing loop $\alpha_\cc$ of $K$ and $\alpha_{\cc'}$ of $K'$. 
If there is such a homomorphism then each irreducible factor of $R_\cc'(y)$ should divide $R_{\cc}(y)$. In particular, $R_{\cc'}(y) \mid R_{\cc}(y)$ if $\phi$ is an epimorphism and $R_{\cc'}(y) = R_{\cc}(y)$ if $\phi$ is an automorphism.
This result may help study  epimorphism or automorphism between knot groups, in particular, for meridian preserving cases.

\subsection{Computations}

We give  several expository computations as examples by using parabolic quandle for the cases of $4_1$, $7_4$, $8_{18}$, $9_{29}$, and $5_1^2$(Whitehead link). 
We provide a detailed procedure to obtain  a complete list of parabolic representations and compute their obstruction classes, Riley polynomials, and $\u$-polynomials. The complex volumes and cusp shapes are also computed by a new diagrammatic method developed recently, which is  inspired by the volume conjecture (\cite{cho_optimistic_2014}, \cite{cho_optimistic_2016-1}, \cite{kim_octahedral_2018}, \cite{kim_octahedral_2019}). 
All these computations are cross-checked by SnapPy \cite{SnapPy} and CURVE project \cite{curveproject}  data. 

Remark that Chern-Simons invariants in the CURVE project are only computed modulo $\frac{\pi^2}{6}$ but our computation is modulo $\pi^2$ as the Chern-Simons invariant  is originally defined by modulo $\pi^2$. 
Moreover, we stress that the list of parabolic representations by our method is complete, i.e. there is no missing irreducible representation. For example, we found that $9_{29}$ knot has a representation of positive obstruction which is missing in the CURVE project. 

The $7_4$ is the first example which has two irreducible components of $\Xpar$,\emph{ the character variety of parabolic representations}, and moreover, we give an example of a generalized Riley polynomial whose constant term is not $\pm1$. 
The $9_{29}$ is known as the first example having a non-integral trace \cite{reid_infinitely_2021}. Note that non-integral Riley polynomial for this example also provides a non-integral trace.

Remark that, in the case of $4_1$, $7_4$,  and $5_1^2$, we can compute $\Xpar$ essentially by hands without the aid of a computer since it uses only long-division of polynomials of small degree. But the other examples require the aid of an elimination algorithm like  Gr\"oebner basis to solve the system.
 
The computation method  works for link cases as well. But the square-splitting property doesn't hold for link cases and we can see this for the case of Whitehead link $5_1^2$. We point out  several differences  for link cases with the $5_1^2$ example in Section \ref{sec:whiteheadlink}.

\subsection{$8_{18}$ knot}
In the case of  $8_{18}$, the structure of $\Xpar$ is fairly complicated and the whole set of the parabolic representations   is not known yet so far. Although the CURVE project \cite{curveproject} provides a list of parabolic representations of $8_{18}$, it contains only 20 representations. By our method, we assert that \emph{there are exactly 26 parabolic representations up to conjugation}. Furthermore, we verify that the Riley polynomials at each crossing respect the rich symmetry of the $8_{18}$ diagram by  the necessary condition of Riley polynomials  in Theorem \ref{thm:homomorfactor} to admit a knot group automorphism.  Using  these together, we obtain that  $8_{18}$ has essentially different  12 parabolic representations and they are completely classified by the complex volumes. See Theorem \ref{thm:818essential}.

%
%
%

\subsection{Classification and final remark}

We would like to highlight that the parabolic quandle system in this paper provides a very efficient system of equations to obtain all parabolic representations. For example,
if one tries to solve the system of equation directly from the Wirtinger relation of $8_{18}$, it would be quite difficult even with  the help of a computer. Note that  Thurston gluing equation used by SnapPEA is much more efficient to solve by computer but it never guarantees that all parabolic representations are obtained.  
 The parabolic quandle method in this paper, however, can compute the complete list of parabolic representations for $8_{18}$ easily by a low-performance laptop. 
%
Based on this method we actually  provide a complete list of parabolic representations for the whole Rolfsen knot table and beyond, which can be  accessed in \cite{diagramsite}.    One may tabulate parabolic representations up to 9 crossings just following the computations in Section \ref{sec:examplecomputations}, but additional computational difficulties begin to emerge for the knots beyond 10 crossings. The  technical details in performing the practical calculations will be addressed in a seperate subsequent paper. The computational data has currently been completed up to 12 crossings and is continuously being updated. 

Moreover, the parabolic quandle method studied in this paper is not only effective but also conceptually interesting,  as can be seen in the advent of sign-type or $u$-field. 
%
Moreover, these approaches using quandle vector can be generalized to non-parabolic representations and still produce an effective system to solve and it helps to understand the whole representation variety, character variety, and $A$-polynomial. We will see these studies in the  forthcoming papers.

Finally, we ask  a basic and fundamental question on the existence of parabolic representations of knot group. 
For irreducible $SU(2)$-representations (and hence  irreducible $\sl$-representations as well),
its existence for any nontrivial knot is known due to Kronheimer and Mrowka \cite{kronheimer_dehn_2004}. On the other hand,
 the existence of (non-abelian)  parabolic representation doesn't seem to be discussed yet in the literature  despite its fundamental importance.
We formulate the following conjecture in a little stronger statement. 
\begin{conj}
A knot $K$ is nontrivial  if and only if 	$\Xpar(K)$ is nonempty. Furthermore it always has  a zero-dimensional component. 	
\end{conj}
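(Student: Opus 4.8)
The plan is to treat the two directions separately. The ``only if'' direction is immediate and I would dispose of it first: the unknot has knot group $\Gk\cong\Zbb$, which is abelian and therefore admits no non-abelian---in particular no parabolic---representation contributing a point to $\Xpar$; equivalently, every generalized Riley polynomial of the trivial diagram is constant. Hence $\Xpar(K)\ne\emptyset$ forces $K$ to be nontrivial.

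The heart of the matter is the ``if'' direction, the existence of a non-abelian parabolic representation for every nontrivial $K$, and I would approach it through the $\sl$-character variety. Starting from the theorem of Kronheimer and Mrowka \cite{kronheimer_dehn_2004}, every nontrivial $K$ carries an irreducible $SU(2)\subset\sl$ representation $\rho_0$, whose meridian trace $\operatorname{tr}\rho_0(\meri)$ lies in the open interval $(-2,2)$. By Thurston's dimension bound for a one-cusped manifold, the component $X_0$ of the $\sl$-character variety through $[\rho_0]$ has dimension at least one. I would then try to deform $[\rho_0]$ within $X_0$ until the meridian trace reaches $2$ (or $-2$): at such a character the meridian is either parabolic or central, and the central case is excluded by irreducibility, since all meridians are conjugate and a central meridian would make the entire representation central, hence reducible. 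Any irreducible character on $X_0$ with meridian trace $\pm2$ is thus automatically boundary-parabolic and supplies the desired point of $\Xpar(K)$. Closer to the machinery of this paper, the same statement can be phrased algebraically through the correspondence between roots and parabolic characters (Lemma~\ref{lem:lambdaalpha}): $\Xpar(K)\ne\emptyset$ amounts to some generalized Riley polynomial $\Riley_\cc(\y)$ having a root affording a non-abelian coloring, for which it would suffice to establish $\deg\Riley_\cc\ge1$ at some base crossing. The analytic and algebraic formulations run into the same difficulty described below.

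For the ``furthermore'' clause I would argue zero-dimensionality in two regimes. For hyperbolic $K$ the complete hyperbolic structure provides a boundary-parabolic character that is locally rigid (Mostow--Prasad rigidity), hence an isolated, zero-dimensional point of $\Xpar(K)$. In general I would tie the claim to the length-two trace phenomenon of this paper: for every pair of arcs the function $[\rho]\mapsto y_\ij=\operatorname{tr}\rho(\meri_i\meri_j)$ is a root of the nonzero polynomial $\Riley_\ij(\y)$, so it takes only finitely many values on $\Xpar(K)$ and, being a regular function with finite image, is constant on each irreducible component. If the length-two traces generate the coordinate ring of $\Xpar(K)$---a global form of Conjecture~\ref{conj:ufield=trfield} and Conjecture~\ref{conj:yfield=ufield}---then any component on which all $y_\ij$ are constant collapses to a single point, and $\Xpar(K)$ is entirely zero-dimensional once it is nonempty. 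Thus the ``furthermore'' clause would follow formally from nonemptiness together with the length-two generation conjecture.

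The step I expect to be the main obstacle is reaching meridian trace $\pm2$ \emph{through irreducible characters}. The meridian trace need not be constant on $X_0$, but as it is driven to the boundary value $\pm2$ the character may run off to an ideal point of the Culler--Shalen compactification or limit to a reducible character, in which case no genuine parabolic representation is produced. Controlling this degeneration uniformly over all nontrivial knots is the real difficulty---most acutely for satellite knots, where neither hyperbolic geometry nor an explicit Seifert-fibered model is available and one would instead have to glue representations across the JSJ decomposition while simultaneously preserving irreducibility and the trace-$\pm2$ condition. In the algebraic formulation the same obstacle appears as the absence of any general lower bound $\deg\Riley_\cc\ge1$; the extensive computations of this paper and the data of \cite{diagramsite} corroborate the conjecture but furnish no uniform mechanism. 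I would expect a complete resolution of existence to require gauge-theoretic input in the spirit of \cite{kronheimer_dehn_2004}, adapted to the noncompact target $\sl$ and to the parabolic constraint, rather than the formal dimension count alone.
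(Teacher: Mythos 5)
The statement you are addressing is stated in the paper as a \emph{conjecture}: the authors offer no proof, only the remark that it is consistent with all known computations. Your proposal does not close it either, and to your credit you say so; but it is worth naming the gaps precisely. The ``only if'' direction is fine (the unknot group is infinite cyclic, hence admits no non-abelian representation), and the hyperbolic case of the ``furthermore'' clause is essentially correct. Everything else remains open. For existence, the deformation argument has three failure modes, of which you name only one: (a) the meridian trace function $t_\meri$ may be \emph{constant} on the component $X_0$ through the Kronheimer--Mrowka character, in which case it never reaches $\pm 2$; (b) if it is nonconstant, its image on the affine variety $X_0$ is only cofinite in $\Cbb$, so the value $2$ may be attained only at an ideal point of the smooth projective model, where there is no actual character of $\Gk$; (c) even if a character with $t_\meri=\pm 2$ exists on the closure of the irreducible locus, it may be reducible, and your observation that a \emph{central} meridian forces reducibility does not exclude a reducible non-central limit. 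None of these is controlled by the formal dimension count, and no argument in the paper supplies the missing control.

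The ``furthermore'' clause as you argue it is additionally circular: the generalized Riley polynomial $\Riley_\ij(\yv)$ is \emph{defined} in the paper only under the standing assumption that $\Xpar$ is zero-dimensional (see the remark following Definition~\ref{def:RileyL}), so you cannot invoke ``$\yv_\ij$ is a root of the nonzero polynomial $\Riley_\ij$'' to conclude that the $\yv_\ij$ take finitely many values on a potentially positive-dimensional $\Xpar$. Without that, the correct general statement is only that each $\yv_\ij$, being a regular function, is constant on each irreducible component --- which yields nothing unless you also assume the length-two generation property, itself a strengthening of the paper's Conjectures~\ref{conj:ufield=trfield} and~\ref{conj:yfield=ufield}. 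Note also that the conjecture asks only that $\Xpar(K)$ \emph{have} a zero-dimensional component, not that it be zero-dimensional, so conditioning the whole clause on an unproven global generation statement proves more than is needed from a hypothesis that is not available. In short: your proposal is a reasonable research program, but it establishes only the trivial direction; the paper itself leaves the statement as an open conjecture, so there is no proof to compare against.
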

%
This question looks quite plausible and is true for all the known computations. 

\section*{Acknowledgments}
The authors express their gratitude to Professor Dong-il Lee
at Seoul Women's University for consulting about Gro\"ebner basis technique,  Dr. Dong Uk Lee for the valuable  comments about algebraic and arithmetic geometry, and Phillip  Choi for computer calculation support. The work of YC was supported by the 2021 sabbatical year research grant of the University of Seoul.
The work of HK and SK was supported by the National Research Foundation of Korea (NRF) grant funded by the Korean government (MSIT) respectively (NRF-2018R1A2B6005691) and (No. 2019R1C1C1003383).

\section{Preliminaries}
Let us review some basic definitions and facts to set the notation and terminology in this paper. We will follow common usage in most textbooks  if the precise definition is not given here.
%

Let $\K$ be an oriented knot in $S^3$. A \emph{knot group} $\Gk$ is the fundamental group $\pi_1(S^3\setminus \K)$ of the knot complement $S^3\setminus \K$. 
Sometimes we deal with   links in $S^3$, but we always consider only knots unless specified.
Let  $\D$ be an oriented knot diagram of $\K$ with $\n$ crossings. 
Let $\cc_1 \dots \cc_\n$ denote the crossings in $\D$ and  $\arc_1,\dots,\arc_\n$ denote the arcs, where an \emph{arc} (i.e., \emph{over-arc}) is a connected curve of $\D$ between two under-passing crossings, that is through only over-passing crossings.

\subsection{Parabolic representation}

\emph{Wirtinger presentation} of $\Gk$ is
a well-known diagrammatic group presentation for a given $D$  defined as
\[\Gk=\gen{ \wg_1,\dots,\wg_\n \mid \wr_1,\dots \wr_{\n} },\]	
where the generators $\wg_i$ are meridian loops  corresponding to each arc $\arc_i$,    
 and the relators $\wr_n$ 
 corresponding to each crossing $\cc_n$ are given as in Figure \ref{fig:WirtingerRelation} as usual.
\begin{figure}[H]
	$${\begin{tikzpicture}
		\draw [thick,->,blue] (1.1,-0.3)-- (1.1,0.3);
		\draw [line width=4pt, white] (0,0)--(2,0);
		\draw [thick,-stealth] (0,0)--(2,0);
		\node  at (2,0) [right] {$ \arc_i$~,};
		\node at (1.1,0.5)  {$ \wg_i$};
\end{tikzpicture}}$$
	\begin{equation*}
		\vcenter{\hbox{\begin{tikzpicture}
					\draw[-stealth,thick] (0,0) -- (1,1);
					\draw[line width=5pt,white]  (0,1)--(1,0);
					\draw[-stealth,thick]  (0,1)--(1,0) ;
					\node at (0.5,0.5) [above,yshift=0.5ex] {$\cc_n$};
					\node at (1,1) [xshift=1.5ex] {$\arc_j$};
					\node at (0,0) [xshift=-1ex] {$\arc_i$};
					\node at (1,0)  [xshift=1.5ex] {$\arc_k$};
				\end{tikzpicture}
		}}
		: \wr_n = \wg_j^{-1} \wg_k^{-1} \wg_i \wg_k ,~ 
		\vcenter{\hbox{\begin{tikzpicture}
					\draw[-stealth,thick]  (0,1)--(1,0) ;
					\draw[line width=5pt,white]  (0,0)--(1,1);
					\draw[-stealth,thick] (0,0) -- (1,1);
					
					\node at (0.5,0.5) [above, yshift=0.5ex] {$\cc_n$};
					\node at (1,1) [xshift=1.5ex] {$\arc_k$};
					\node at (0,1) [xshift=-1ex] {$\arc_i$};
					\node at (1,0)  [xshift=1.5ex] {$\arc_j$};
				\end{tikzpicture}
		}}
		: \wr_n = \wg_j^{-1} \wg_k \wg_i \wg_k^{-1}
	\end{equation*}
	\caption{Wirtinger relation}
	\label{fig:WirtingerRelation}
\end{figure}

Let $\sl$ be a matrix group of $\sm{ a &b \\ c& d}$ with $ a d - b c =1$ and $\psl$ be the quotient group factored by $\Set{ \pm \id }$. Here, we choose  the coefficient ring to be  $\Cbb$ of complex numbers,  but almost all the arguments in this paper hold for arbitrary algebraically closed fields.

\begin{defn}
	An $\sl$ (resp. $\psl$)  representation of a link $\K$ is a  group homomorphism $\rho$ from $\Gk$ to $\sl$ (resp. $\psl$). A non-abelian  representation $\rho$ is \emph{parabolic} if   $\tr(\rho(\meri))=2$(resp. $\pm 2$) and $\rho(\meri)\neq \id$ (resp. $\pm\id$)  for any meridian $\meri$. 	
\end{defn}
\begin{rmk}
The definition  requires that $\rho$-image is non-abelian, as the original Riley's definition does \cite{riley_parabolic_1972}. However, 
we sometimes say about \emph{abelian parabolic representations}  just ignoring non-abelian requirement. 
In fact, abelian parabolic representations are very simple and we will briefly think of these in the next section and compute an example of  link case in Section \ref{sec:whiteheadlink}. 
\end{rmk}
 \begin{rmk}\label{rmk:bdparabolic}
	If the condition requiring  non-trivial meridian is omitted then such a  representation may not be boundary-parabolic  for a link  case. In spite of  $\tr(\rho(m))=2 $, the peripheral image of a link component may have the trivial (parabolic) meridian of $\id$ and a non-parabolic longitude. So we require the non-triviality for any meridian. 
 \end{rmk}


An $\sl$-representation $\rho$ naturally induces a  $\psl$-representation $[\rho]$ by the composition of the canonical projection $\sl\to\psl$. Moreover, for parabolic representations, $\rho\mapsto[\rho]$ is a bijection as follows.
\begin{lem}\label{lem:pslsllifting}
	Any parabolic $\psl$ representation $\rho$ lifts  to a parabolic $\sl$ representation $\tilde\rho$ uniquely.
\end{lem}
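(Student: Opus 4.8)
The plan is to exploit two facts: a parabolic element of $\psl$ has a \emph{canonical} lift to $\sl$, and the Wirtinger relations are conjugations, which preserve trace. First I would observe that if $g\in\psl$ is parabolic, then its two lifts $\pm A\in\sl$ have traces $\pm\tr(A)$ with $\tr(A)=\pm2$, so exactly one of them has trace $+2$. This trace-$+2$ lift is well-defined and is moreover the unique lift of $g$ that is parabolic in the $\sl$-sense. Applying this to each Wirtinger generator $\wg_i$ (a meridian, hence parabolic by hypothesis, and not $\pm\id$), I would set $\tilde\rho(\wg_i):=A_i$, the trace-$+2$ lift of $\rho(\wg_i)$, and claim that this assignment extends to the desired homomorphism.

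For existence I would verify that the $A_i$ satisfy the Wirtinger relations in $\sl$. Reading off Figure \ref{fig:WirtingerRelation}, each relation has the form $\wg_j=\wg_k^{\pm1}\wg_i\wg_k^{\mp1}$, i.e.\ it merely asserts that $\wg_j$ is conjugate to $\wg_i$. Now $A_k^{\pm1}A_iA_k^{\mp1}$ has the same trace as $A_i$, namely $+2$, and it projects to $\rho(\wg_j)$ in $\psl$; since $A_j$ also projects to $\rho(\wg_j)$ and has trace $+2$, the two must coincide, because the only alternative is that they differ by a sign, which would flip the trace to $-2$. Hence every Wirtinger relation holds \emph{exactly} in $\sl$, so $\tilde\rho$ is a genuine homomorphism lifting $\rho$. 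That every meridian $\meri$---not just the generators---maps to a trace-$+2$ element follows because all meridians are conjugate to some $\wg_i$, and $\tilde\rho(\meri)\neq\id$ is inherited from $\rho(\meri)\neq\pm\id$; likewise $\tilde\rho$ is non-abelian because $\rho$ is. Thus $\tilde\rho$ is parabolic.

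For uniqueness I would take any parabolic $\sl$-lift $\tilde\rho'$ of $\rho$ and compare it with $\tilde\rho$ on generators: both $\tilde\rho'(\wg_i)$ and $A_i$ lift $\rho(\wg_i)$, so they differ by a factor $\pm1$, but the parabolic condition forces $\tr\tilde\rho'(\wg_i)=2=\tr A_i$, which rules out the $-1$. Hence $\tilde\rho'=\tilde\rho$ on the generators, and therefore everywhere.

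The only place where something could fail in a general lifting problem is the vanishing of the sign obstruction that normally lives in $H^2(\Gk;\Zbb/2)$, and this is the conceptual heart of the argument. What lets the proof avoid any cohomology is precisely the trace-$+2$ normalization: it makes every conjugation relation hold on the nose, so the parabolic constraint trivializes the obstruction cocycle directly. I therefore expect no genuine obstacle beyond the routine bookkeeping of the two forms of the Wirtinger relation and the trace signs.
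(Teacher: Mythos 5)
Your proposal is correct and follows essentially the same route as the paper: fix the trace-$+2$ lift of each Wirtinger generator, note that the conjugation form of the Wirtinger relations preserves trace so the relations hold exactly in $\sl$, and deduce uniqueness from the trace normalization. You simply spell out the verification that the paper leaves implicit.
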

\begin{proof}
	Consider $\rho$-images of the Wirtinger generators as $\psl$  matrix which satisfy the Wirtinger relations of Figure \ref{fig:WirtingerRelation}. There are two choices of lifting at each $\tilde\rho(\wg_i)$, i.e., $\tr(\tilde\rho(\wg_i))=$ $+2$ or $-2$. Take a lifting of $\tr=+2$, then  all Wirtinger relations are also satisfied in $\sl$. The uniqueness is obvious since the trace is fixed as  $+2$.
\end{proof}
Therefore,  we may focus on $\sl$ representations in many practical cases. 
From now on, a \emph{representation}  always means a parabolic $\sl$-representation unless specified otherwise.

\subsection{Character variety}\label{sec:charactervariety}
Let $\P$ be the set of $\sl$ matrices of $\tr=2$, i.e.,
$$\P :=\{\left(\begin{smallmatrix}
	a &b \\ c &d
\end{smallmatrix} \right) \in \Cbb^4  \mid a d - b c = 1 \text{ and } a + d = 2\}.$$

Once we have a knot diagram $D$, let us define the following set
$$\Rparst :=\Rparst(K):= 
\Set{ (\bm\wg_1,\dots,\bm\wg_\n) \in \P^{\n}\subset\Cbb^{4n} \mid \bm\wr_1,\dots, \bm\wr_{\n} }
$$
where each bold  $\bm\wg_i$ stands for a matrix corresponding to each Wirtinger generator $\wg_i$ of $D$ and each $\bm\wr_i$ is also the  Wirtinger relator replacing the symbols of $\wg_i$  with $\bm\wg_i$ at each $\wr_i$.
Therefore, $\Rparst$ consists of $\sl$-representations of $\tr(\bm\wg_i)=2$.
Note that the relations $
\{\bm\wr_i=1\}$ is a system of polynomial equations of entries in $\sl$ matrices and hence the $\Rparst$ is Zariski-closed and an affine variety in $\Cbb^{4\n}$ by definition.

As we are  interested in representations up to conjugation, we let  $\bRparst$ be the quotient by conjugation, 
$\bRparst:= \Rparst /_\sim$  where  $$\rho \sim \rho' \iff A \rho A^{-1} = \rho' \text{  for some } A \in \sl$$
and $\Xparst$ be defined by
$
\Xparst:= \Rparst /_{\bm\sim}$
where  
$$ \rho \bm\sim \rho' \iff \tr(\rho(a)) = \tr(\rho'(a)) \text{  for all } a \in \Gk.$$

Since the coefficient $\Cbb$ is algebraically closed, the image $\im(\rho)$ of any reducible representation $\rho \in \Rpar$ is conjugated into $\Set{\sm{1&*\\0&1}}$   (\cite[Section 2]{riley_parabolic_1972}). Thus
$\rho$ is abelian if and only if $\rho$ is reducible. Considering the basic setting in  \cite{culler_varieties_1983} and the fact that $\Gk$ is generated by conjugations of a single generator, we have several   obvious facts as follows.\
\begin{align*}
	&	\bRab =\bRred=\Set{(\id,\dots,\id), [(\sm{1&1\\0&1},\dots,\sm{1&1\\0&1})] } = \text{two points},\\
	& \bRnt = \bRnab \sqcup \{[(\sm{1&1\\0&1},\dots,\sm{1&1\\0&1})]\},  \\
	&	\Xab=\Xred= \text{ a singleton of } [\rho] \text{ such that } \tr(\rho(a))=2 \text{ for all } a  \in \Gk,  \\
	&	\bRnab=\bRirr=\Xnab=\Xirr,\\
	& \Xparst = \Xab \sqcup \Xnab, 
\end{align*}
where $ab$, $nab$, $nt$, $red$, and $irr$ stand for \emph{abelian}, \emph{non-abelian}, \emph{non-trivial}, \emph{reducible}, and \emph{irreducible}, respectively and the equalities are just in the  set-theoretic sense. 
In particular, $\bRab$ has only two elements from the trivial representation and abelian representations, and
 $\Rnt$ is  the set of all representations except the trivial representation  $\rho_\circ =(\id,\dots,\id)$, i.e., the set  of only non-trivial abelian representations and non-abelian representations. 

Our main concern is the non-abelian parts $\Rnab$ and $\Xnab$ of $\Rparst$ and $\Xparst$,  and denote them simply by $\Rpar$ and $\Xpar$ respectively.
Now let us define the set of parabolic representations concretely as follows. 
\begin{defn}
$$\Rpar:=\Rpar(K):= 
	\{ \rho \in \Rparst(K) \mid   \bm\wg_i\neq\bm\wg_j \text{ for some }i,j \}$$
\end{defn}
\begin{prop}
	$\Rpar$ is  the set of parabolic representations.
\end{prop}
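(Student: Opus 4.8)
The plan is to prove the stated set equality as a chain of equivalences, matching each clause of the definition of \emph{parabolic} against the membership condition for $\Rpar$. Two structural facts about the knot group do all the work: every meridian of $K$ is conjugate in $\Gk$ to some Wirtinger generator $\wg_i$, and the $\wg_i$ both generate $\Gk$ and all map to a generator of $H_1(S^3\setminus K)\cong\Zbb$.

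First I would note that membership in $\Rparst$ already encodes two of the three requirements in the definition of a parabolic representation: its elements are honest $\sl$-representations of $\Gk$ (they satisfy the Wirtinger relations) with $\tr(\bm\wg_i)=2$ on every Wirtinger generator. Since any meridian $\meri$ is conjugate to some $\wg_i$ and the trace is a conjugacy invariant, this immediately upgrades to $\tr(\rho(\meri))=2$ for \emph{every} meridian. Hence the whole proposition reduces to showing that the defining inequality ``$\bm\wg_i\neq\bm\wg_j$ for some $i,j$'' is equivalent, for $\rho\in\Rparst$, to the conjunction ``$\rho$ is non-abelian and $\rho(\meri)\neq\id$ for every meridian $\meri$.''

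The core of this is the equivalence between ``$\bm\wg_i\neq\bm\wg_j$ for some $i,j$'' and ``$\rho$ is non-abelian.'' One direction is immediate: if all $\bm\wg_i$ equal a common matrix $M$, then $\im(\rho)=\langle M\rangle$ is cyclic, hence abelian. For the converse I would use that an abelian (equivalently, reducible) representation factors through the abelianization $H_1(S^3\setminus K)\cong\Zbb$, under which every meridian maps to the generator; consequently all $\bm\wg_i$ coincide. This is exactly the dichotomy recorded in the list of facts following $\bRab=\bRred$ above, so I can cite it rather than reprove it. With this equivalence established, the non-triviality clause is automatic: if $\rho(\meri)=\id$ for one meridian, then by conjugacy $\rho(\meri')=\id$ for all meridians, and since meridians generate $\Gk$ this forces $\rho$ to be trivial, hence abelian, contradicting $\rho\in\Rpar$. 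Conversely a parabolic $\rho$ is non-abelian by hypothesis, so its $\bm\wg_i$ cannot all coincide, and since its Wirtinger generators are meridians it lies in $\Rparst$; thus $\rho\in\Rpar$.

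I do not expect a genuine obstacle, as this is essentially a bookkeeping lemma. The one point deserving care is the implication ``abelian $\Rightarrow$ all $\bm\wg_i$ equal,'' which is precisely where the knot hypothesis enters through $H_1(S^3\setminus K)\cong\Zbb$; for a general link the meridians need not become equal in homology, consistent with the paper's repeated warnings that the link case behaves differently. I would therefore invoke the knot hypothesis explicitly at that step.
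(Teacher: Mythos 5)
Your proof is correct and follows the same overall structure as the paper's: both reduce the statement to the equivalence between ``$\bm\wg_i\neq\bm\wg_j$ for some $i,j$'' and non-abelianness, and both treat the non-triviality clause as an immediate consequence. The one place you diverge is in proving that an abelian $\rho$ forces all $\bm\wg_i$ to coincide: you factor $\rho$ through the abelianization $H_1(S^3\setminus K)\cong\Zbb$, where all meridians of a knot become the single generator, whereas the paper argues directly from the Wirtinger relations --- commutativity collapses each relation to $\bm\wg_j=\bm\wg_k^{\pm1}\bm\wg_i\bm\wg_k^{\mp1}=\bm\wg_i$, and following the single knot component around the diagram propagates equality to all arcs. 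The two mechanisms are equally elementary and isolate the knot hypothesis at the same point (a single component, equivalently $H_1\cong\Zbb$ with all meridians homologous); your homological phrasing makes the failure for links slightly more transparent, while the paper's relation-chasing stays entirely inside the diagrammatic setup. Your additional remarks --- that the trace condition on all meridians follows from conjugacy to a Wirtinger generator, and that a trivial meridian image forces triviality of $\rho$ --- are correct and fill in details the paper dismisses as ``obvious.''
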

\begin{proof}
	Note that  $\rho$ is abelian if and only if $\rho(\wg_i)=\rho(\wg_j)$ for all $1\leq i,j \leq N$ since
	the Wirtinger relation $\rho(\wr_n)=\bm\wr_n=\id$    becomes $\bm\wg_j=\bm\wg_k^{\pm1}\bm\wg_i
	\bm\wg_k^{\mp1}=\bm\wg_i$ by the commutativity. The requirement of $\rho(\wg_i) \neq \id$ is also obvious.
\end{proof}
	
Since  $\bm\wg_i\neq\bm\wg_j$ is a Zariski open condition, the $\Rpar$ is a quasi-affine variety  by definition. But we can say $\Rpar$ is an affine variety as follows.
\begin{prop}
	$\Rpar$ is  Zariski closed.
\end{prop}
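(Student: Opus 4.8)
The plan is to prove the equivalent statement that the complement $\Rab = \Rparst\setminus\Rpar$ (the abelian, i.e.\ all-generators-equal, locus) is \emph{open} in $\Rparst$; since $\Rparst$ is closed in $\Cbb^{4\n}$, this makes $\Rpar$ closed. As $\Rab$ is manifestly closed (cut out by $\bm\wg_i=\bm\wg_j$), the content is the openness, i.e.\ that no non-abelian representation degenerates to an abelian one, equivalently $\overline{\Rpar}\cap\Rab=\emptyset$. First I would record the conjugation-orbit structure of $\Rab$. Identifying $\Rab$ with $\P$ via $\mathbf g\mapsto(\mathbf g,\dots,\mathbf g)$, conjugation has exactly two orbits: the single point $\rho_\circ=(\id,\dots,\id)$ and the set $O$ of all nontrivial ones (all nonzero trace-$2$ parabolics being mutually conjugate). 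The orbit $O$ is not closed: conjugating the abelian representation with all generators $\sm{1&1\\0&1}$ by $\mathrm{diag}(s,s^{-1})$ and letting $s\to0$ shows $\rho_\circ\in\overline O$, whence $\overline O=\Rab$. Since $\overline{\Rpar}$ is closed and conjugation-invariant, if it met $\Rab$ at a nontrivial point it would contain all of $O$, hence $\overline O=\Rab\ni\rho_\circ$. Thus the whole problem reduces to the single assertion
\[ \rho_\circ\notin\overline{\Rpar}. \]

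To establish this I would run a tangent-space/dimension count. The key computation is that $T_{\rho_\circ}\Rparst$ is only $3$-dimensional. Indeed a tangent vector is a tuple $(X_1,\dots,X_\n)$ of trace-zero matrices (the tangent space to $\P$ at $\id$ being $\mathfrak{sl}_2$, since both defining equations of $\P$ have gradient $(1,0,0,1)$ there), and linearizing a Wirtinger relation $\bm\wg_j=\bm\wg_k^{\pm1}\bm\wg_i\bm\wg_k^{\mp1}$ at the trivial representation gives simply $X_j=X_i$, the term $X_k$ dropping out at first order. As the knot diagram is connected, these force all $X_i$ to coincide, so $T_{\rho_\circ}\Rparst$ is the diagonal copy of $\mathfrak{sl}_2$, of dimension $3$. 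On the other hand every point of $\Rpar$ is irreducible, so its conjugation orbit is closed of dimension $3$; the fibres of the natural map $\pi\colon\Rpar\to\Xnab$ are these single orbits, so each irreducible component $C_i$ of $\Rpar$ has $\dim C_i=3+\dim\pi(C_i)\ge 3$. If $\rho_\circ$ lay in $\overline{\Rpar}=\bigcup_i\overline{C_i}$, it would lie in some $\overline{C_i}$, and then $3\le\dim C_i=\dim_{\rho_\circ}\overline{C_i}\le\dim T_{\rho_\circ}\Rparst=3$ would force $\dim C_i=3$, so $\pi(C_i)$ is a point and $C_i$ is a single \emph{closed} orbit of an irreducible representation. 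Such an orbit cannot contain the reducible $\rho_\circ$, a contradiction. This yields $\rho_\circ\notin\overline{\Rpar}$ and hence the proposition.

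The step I expect to be the main obstacle is the tangent-space computation and its clean exploitation, which I would treat carefully in two respects. First, one must justify that the Zariski tangent space is exactly the kernel of the linearized equations and verify $X_j=X_i$ for both crossing types; this is precisely where the hypothesis that $K$ is a \emph{knot} enters, since connectedness of the diagram (all arcs linked into one chain of under-passes) collapses the tangent space to a single diagonal $\mathfrak{sl}_2$. For a link the analogous space would be $\mathfrak{sl}_2^{\,c}$ and the estimate would break down, consistent with the link-case discrepancies recorded later in the paper. Second, I rely on the standard facts over $\Cbb$ that orbits of irreducible representations are closed of dimension $3$ and that equal-character irreducibles are conjugate, together with the already-observed identification of reducible parabolic representations with the abelian ones; I would state these explicitly so that the orbit bookkeeping for $\Rab$ and the fibre description of $\pi$ are self-contained.
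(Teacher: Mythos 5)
Your proof is correct, but it takes a genuinely different route from the paper's. The paper's argument is essentially a citation: it observes that the meridian eigenvalue of a parabolic representation is $\mu=1$, that $\Delta(1)=\pm1$ so $\Delta(\mu^2)\neq 0$, and then invokes Klassen's theorem that such an abelian representation has a neighborhood in the representation variety consisting entirely of abelian representations; closedness then follows since a quasi-affine set that is classically closed is Zariski closed. Your argument replaces the external input by a self-contained local computation: the orbit-closure observation $\rho_\circ\in\overline{O}$ cleverly reduces the whole question to the single point $\rho_\circ$ (where the tangent space is smallest), and there the linearized Wirtinger relations collapse to the diagonal $\mathfrak{sl}_2$, which is too small to accommodate a component of $\Rpar$ (each such component contains a closed $3$-dimensional orbit of an irreducible representation, using that reducible $=$ abelian on the parabolic locus). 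Each approach has its merits: the paper's is shorter and connects the statement to the classical deformation theory of abelian representations via the Alexander polynomial (the "real reason" the statement is true, namely $\Delta(1)=\pm1$), while yours is elementary, makes the role of the knot hypothesis transparent (connectedness of the under-arc adjacency forces $X_i=X_j$ for all $i,j$), and sidesteps any worry about transferring Klassen's $SU(2)$ statement to $\mathrm{SL}_2(\Cbb)$. The standard facts you lean on — closedness and $3$-dimensionality of orbits of irreducible representations, conjugacy of equal-character irreducibles, and $\dim_p X\le\dim T_pX$ — are all available in the Culler--Shalen framework the paper already cites, so no genuine gap remains; just be sure to state, as you indicate, that the Zariski tangent space of the reduced variety is contained in the kernel of the linearized defining equations (only the upper bound $\dim T_{\rho_\circ}\Rparst\le 3$ is needed).
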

\begin{proof}
	Considering the whole representation variety $\Rep(\K)$ in the sense of Culler-Shalen  \cite{culler_varieties_1983}, it is obvious that 
$\Rpar^* =(t_m: \Rep(\K)\to \Cbb )^{-1}(2)$
where $t_\meri(\rho) = tr(\rho(\meri))$ is a regular function on $\Rep(\Gk)$ for a meridian loop $\meri$.
For the first eigenvalue $\mu$ of  $\rho(\meri)$ for any parabolic representation, $\mu=1$ and hence $\mu^2$ cannot be zero of the Alexander polynomial $\Delta(t)$ of $K$ because  $\Delta(1)=\pm1$. 
	By  \cite[Theorem 19]{klassen_representations_1991},  an abelian representation $\rho_{\mu=1}^{ab}$ with $\Delta(\mu^2)\neq0$ has a neighborhood in $R(K)$ entirely consisting of abelian representations and hence  a limit point of  parabolic representations cannot be an abelian representation. Since $\Rpar$ is quasi-affine, the  closure of $\Rpar$ in the usual topology is Zariski-closed \cite[Section I.10]{mumford_red_1999} and it completes the proof.
\end{proof}

Maybe one can think of $\mathcal{X}_{par}$ by GIT quotient for a more elaborated theory. 
It would be possible to see that the coordinate ring $\Cbb[\Xpar]$ is given by $ \Cbb[\mathcal{X}_{par}] /_{\!\!\sqrt{0}}$ the quotient by the nilradical.
In this paper,  we  consider only $\Xpar$, not $\mathcal{X}_{par}$, and usually in  the case when $\Xpar$ is zero-dimensional. For the case of $\dim_\Cbb (\Xpar) \neq 0$, every statement is for the zero-dimensional subvariety $\Xpar^{(0)}$ instead of $\Xpar$.

%

%
%
%
%
%

\subsection{Parabolic quandle}\label{sec:paraquan}
In \cite{inoue_quandle_2013}, A. Inoue and Y. Kabaya introduced parabolic quandle. Notice 
 the subtle difference between our setting here and theirs. They considered  $\psl$ representations, but we consider $\sl$ representations. 
Although it seems not to produce any practical difference due to Proposition \ref{lem:pslsllifting}, we eventually obtain new results as dealing with the sign choices in the arc-colorings, in contrast to all the previous studies with sign-ambiguity. 


\begin{defn}\label{defn:mmap}
	A \emph{parabolic quandle map} $\M$ into $\sl$-matrices with trace 2 is defined by
	\begin{equation} \label{eqn:m}
		\M : \Cbb^2 \rightarrow \P  \quad\text{ by } \binom{x}{y} \mapsto 
		\begin{pmatrix}
			1-x y & x^2 \\ -y^2 & 1 + x y
		\end{pmatrix}.
	\end{equation} 
\end{defn}
\begin{prop}\label{prop:paramap}
	$ \M$ is surjective  and  two-to-one except at the origin $\qvec{0}{0}$.
\end{prop}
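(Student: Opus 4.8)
The plan is to exploit the evident symmetry of $\M$ and then solve its defining equations entrywise. First I would record the identity $\M(-x,-y)=\M(x,y)$, which holds because every entry of $\M\qvec{x}{y}$ is a monomial of even total degree in $(x,y)$. Consequently each fiber of $\M$ is invariant under the antipodal involution $(x,y)\mapsto(-x,-y)$, whose only fixed point is the origin. This already isolates $\qvec{0}{0}$ as the exceptional point and reduces the whole statement to showing that every fiber away from the origin is a single antipodal pair. It is also worth noting at the outset that $\tr\M\qvec{x}{y}=2$ and $\det\M\qvec{x}{y}=(1-xy)(1+xy)+x^2y^2=1$ identically, so $\M$ indeed lands in $\P$.

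For surjectivity I would take an arbitrary $A=\sm{a&b\\c&d}\in\P$, so that $a+d=2$ and $ad-bc=1$, and look for $(x,y)$ solving
\[
x^2=b,\qquad y^2=-c,\qquad xy=1-a.
\]
The trace-$2$ condition makes the two diagonal equations $1-xy=a$ and $1+xy=d$ equivalent, so these three scalar equations are exactly what must be met. The key compatibility is the relation $-bc=(1-a)^2$, which I would derive by substituting $d=2-a$ into $ad-bc=1$; it guarantees $x^2y^2=(xy)^2$ and hence that the square roots in the first two equations can be chosen coherently with the third.

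I would then split into cases by which of $b,c$ vanish. If $b\neq 0$, pick either root $x=\pm\sqrt{b}$ and set $y:=(1-a)/x$; the compatibility relation forces $y^2=(1-a)^2/b=-c$, so $\M\qvec{x}{y}=A$, and the two sign choices of $x$ send $y$ to $-y$, producing exactly the antipodal pair $\pm(x,y)$. The symmetric argument handles $c\neq 0$. The only remaining case is $b=c=0$, which together with $a+d=2$ forces $a=d=1$ and $A=\id$; its unique preimage is $\qvec{0}{0}$, the exceptional point.

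Finally, for the two-to-one count I would show that any two preimages agree up to sign: if $\M\qvec{x}{y}=\M\qvec{x'}{y'}$ then $x^2=x'^2$, $y^2=y'^2$, and $xy=x'y'$, whence $x'=\pm x$, $y'=\pm y$, and the product condition forces the two signs to coincide whenever $x,y\neq 0$. The main place to be careful — the only genuine subtlety — is the degenerate locus where exactly one of $b,c$ is zero: there one coordinate is pinned to $0$ and the product equation becomes vacuous, yet the fiber is still exactly $\pm(x,y)$ because the surviving nonzero coordinate is determined only up to sign. Collecting the cases shows every fiber outside the origin has precisely two elements, which completes the proof.
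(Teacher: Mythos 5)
Your proof is correct. It takes a genuinely different route from the paper: the paper's argument is a one-line conjugation computation, writing an arbitrary element of $\P$ as $B\sm{1&1\\0&1}B^{-1}$ for $B=\sm{x&z\\y&w}\in\sl$ and observing that this conjugate equals $\M\qvec{x}{y}$, so that $\M$ is essentially the orbit map of the conjugation action on the standard parabolic and the fiber is visibly $\{\pm\qvec{x}{y}\}$. You instead solve the entrywise system $x^2=b$, $y^2=-c$, $xy=1-a$ directly, using the compatibility relation $-bc=(1-a)^2$ extracted from $\tr A=2$ and $\det A=1$. What your approach buys is an explicit and careful treatment of the degenerate cases: the identity matrix (which lies in $\P$ but is \emph{not} conjugate to $\sm{1&1\\0&1}$, a point the paper's proof silently skips over) and the locus where exactly one of $b,c$ vanishes, where the product equation degenerates but the fiber is still an antipodal pair. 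What the paper's approach buys is brevity and the conceptual link between $\M$ and the parametrization of parabolics by conjugation, which is the viewpoint reused later (e.g.\ in Lemma \ref{lem:paraequi}). Both arguments are valid over any algebraically closed field, as the paper requires.
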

\begin{proof}
	Any element in $P$ is conjugate to a translation $\sm{1&1\\0&1}$ and can be written in the form, 
	$$\begin{pmatrix}x &z \\ y &w\end{pmatrix}
	\begin{pmatrix}1&1\\0&1 \end{pmatrix}
	\begin{pmatrix} x&z\\y&w \end{pmatrix}^{-1}=\begin{pmatrix}1-xy&x^2\\-y^2&1+xy\end{pmatrix}.$$
 The preimage of $\M$ is exactly  $\{\sm{x\\y}, \sm{-x\\-y}\}$.
\end{proof}

Moreover, $\M$ has an important property as follows.
\begin{lem}\label{lem:paraequi}
	The parabolic quandle map $ \M$ is equivariant under left multiplication and conjugation by any $\sl$ matrix $A$, i.e. 
	$$
	\M(A b) = A \, \M(b) A^{-1} \text{ for $b\in \Cbb^2$ and $A \in \sl$}.
	$$  
\end{lem}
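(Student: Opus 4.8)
The plan is to reduce the identity to the conjugacy description of $\M$ that already underlies the proof of Proposition~\ref{prop:paramap}. Writing $T=\sm{1&1\\0&1}$ for the standard parabolic translation, the first step I would record is the factorization
\[
\M(b)=B\,T\,B^{-1},
\]
valid for any $B\in\sl$ whose first column is $b$. This is exactly the computation in Proposition~\ref{prop:paramap}: expanding $B\,T\,B^{-1}$ for $B=\sm{x&z\\y&w}$ with $xw-zy=1$ yields $\sm{1-xy&x^2\\-y^2&1+xy}=\M\binom{x}{y}$, and crucially the result depends only on the first column $\binom{x}{y}$, not on the auxiliary entries $z,w$. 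For $b\neq0$ such a $B\in\sl$ always exists (take $w=1/x,\,z=0$ when $x\neq0$, and $z=-1/y,\,w=0$ when $x=0\neq y$).

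Granting this, the equivariance is almost immediate. Given $A\in\sl$ and $b\neq0$, I would pick $B\in\sl$ with first column $b$; then $AB\in\sl$, since $\det(AB)=\det A\cdot\det B=1$, and its first column is $Ab$, which is again nonzero as $A$ is invertible. Applying the factorization to both $b$ and $Ab$ gives
\[
\M(Ab)=(AB)\,T\,(AB)^{-1}=A\,(B\,T\,B^{-1})\,A^{-1}=A\,\M(b)\,A^{-1}.
\]
The degenerate case $b=0$ is handled separately and trivially: $\M(0)=\id$ and $A$ fixes the zero vector, so both sides equal $\id$.

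I do not expect a genuine obstacle; the only point needing care is the well-definedness of the factorization, i.e. that $B\,T\,B^{-1}$ is insensitive to the choice of $z,w$ over a fixed first column, which the explicit expansion settles. As an alternative I could avoid the factorization entirely and verify the identity by brute force: parametrize $A=\sm{p&q\\r&s}$ with $ps-qr=1$, substitute $Ab$ into the defining formula~\eqref{eqn:m}, expand $A\,\M(b)\,A^{-1}$ separately, and match the four entries using $ps-qr=1$. This is elementary but opaque, so I would present the conjugation argument as the main proof, since it also explains \emph{why} the equivariance holds and packages both the left-multiplication and conjugation aspects into a single line.
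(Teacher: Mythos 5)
Your proof is correct. The paper disposes of this lemma with the single line ``straightforward computation,'' i.e.\ the brute-force entry-by-entry verification that you mention only as a fallback; your main argument takes a genuinely different and more conceptual route. You extract from the proof of Proposition~\ref{prop:paramap} the factorization $\M(b)=B\,T\,B^{-1}$ for any $B\in\sl$ with first column $b$ (with $T=\sm{1&1\\0&1}$), note that the result is independent of the completion of $b$ to a matrix in $\sl$, and then the equivariance $\M(Ab)=(AB)T(AB)^{-1}=A\,\M(b)\,A^{-1}$ is a one-line consequence; the degenerate case $b=0$ is handled separately and correctly. What your approach buys is an explanation of \emph{why} the identity holds --- $\M(b)$ is the conjugate of a fixed parabolic by any matrix carrying $\sm{1\\0}$ to $b$, so left multiplication on vectors must correspond to conjugation on matrices --- at the modest cost of having to check existence and well-definedness of the completion $B$, both of which you address. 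The paper's computation is shorter to state but opaque; either is acceptable here.
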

\begin{proof}
	Straightforward computation.
\end{proof}
Let us adopt usual quandle notations $\qr$ and $\ql$  for the following operation in $\Cbb^2$,
\begin{equation}\label{eqn:quandledef}
	a \qr b := \M (b)^{-1} a ~~\text{ and }~~ a \ql b := \M(b) a.
\end{equation}  
Then, one can check that $\Cbb^2$ with $\qr$ satisfy the quandle axioms by 
Lemma \ref{lem:paraequi}, i.e. 
$$\textrm{(i)}~ a\qr a= a,~ 
\textrm{(ii)}~ (a\qr b ) \ql b = a,~ 
\textrm{(iii)}~ (a \qr b )\qr c= (a \qr c) \qr (b \qr c) ~\text{ for } a,b,c \in \Cbb^2.$$ 
Therefore, we let $\Q:=(\Cbb^2, \qr)$ be a \emph{parabolic quandle}.

\begin{rmk}
	The origin $\nullvec \in \Cbb^2$ would often cause  complication, although it satisfies the quandle axiom  just as the other  vectors. 
	It corresponds to the identity element in $\sl$ and always produces the trivial  representation which has the same character as abelian parabolic representations.
	We usually exclude it in practice but we sometimes need to consider the null vector in $\Q$ in particular for the link cases as in  Section \ref{sec:whiteheadlink}.
\end{rmk}

%

Let us consider a system of parabolic quandle  for a given diagram $\D$ as follows,  
\begin{equation}
	\QQ^*:=\QQ_\D^* :=\{(\aa_1,\dots,\aa_N) \in \Q^N \mid r_1,\dots,r_{N} \}
\end{equation}
where the relation $r_n$ at crossing $\cc_n$ is given as in Figure \ref{fig:QuandleRelationWithSignAmbiguity}.
\begin{figure}[H]
	\begin{equation*}
		\begin{matrix}
			\vcenter{\hbox{\begin{tikzpicture}
						\draw[-stealth,thick] (0,0) -- (1,1);
						\draw[line width=5pt,white]  (0,1)--(1,0);
						\draw[-stealth,thick]  (0,1)--(1,0) ;
						\node at (0.5,0.5) [above, yshift=0.5ex] {$\cc_n$};
						\node at (1,1) [xshift=1.5ex] {$\arc_j$};
						\node at (0,0) [xshift=-1ex] {$\arc_i$};
						\node at (1,0)  [xshift=1.5ex] {$\arc_k$};
					\end{tikzpicture}
			}}
			(positive~ crossing): r_n\leftrightarrow \pm a_j=a_i \qr a_k ~~~\text{}  \\
			
			\vcenter{\hbox{\begin{tikzpicture}
						\draw[-stealth,thick]  (0,1)--(1,0) ;
						\draw[line width=5pt,white]  (0,0)--(1,1);
						\draw[-stealth,thick] (0,0) -- (1,1);
						
						\node at (0.5,0.5) [above, yshift=0.5ex] {$\cc_n$};
						\node at (1,1) [xshift=1.5ex] {$\arc_k$};
						\node at (0,1) [xshift=-1ex] {$\arc_i$};
						\node at (1,0)  [xshift=1.5ex] {$\arc_j$};
					\end{tikzpicture}
			}}
			(negative~ crossing): r_n\leftrightarrow \pm a_j=a_i \ql  a_k	
		\end{matrix}
	\end{equation*}
	\caption{Parabolic quandle relation with sign-ambiguity}
	\label{fig:QuandleRelationWithSignAmbiguity}
\end{figure}

$\QQ^*$ is also an affine variety in $\Cbb^{2N}$ since it is the set of solutions to polynomial equations  obtained as in Figure \ref{fig:QuandleRelationWithSignAmbiguity}, which are just equivalent to the Wirtinger relations.
Note that the parabolic quandle map $\M$ extends to the whole system $\QQ^*$ onto $\Rparst$ by the equivariance given in Lemma \ref{lem:paraequi}, as follows.
\begin{prop}\label{prop:surjM}
	There is a well-defined surjective regular map,
	$$\M: \QQ^* \to \Rparst ~\text{ by } a_i \mapsto \M(a_i) \text{ for } i=1,\dots,N. $$
	In particular, 
	$
	|\M^{-1} (\rho)|=  2^N 
	$
	in $\Rnt$ and $\M^{-1} (\rho_{\circ})=\{(0,\dots,0)\}$ with the trivial representation $\rho_{\circ}$.
	%
\end{prop}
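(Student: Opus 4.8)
The plan is to prove the three assertions — well-definedness into $\Rparst$, surjectivity, and the fiber count — by exploiting the equivariance of $\M$ together with the fact that $\M$ identifies a vector with its negative, so that the sign ambiguity built into the quandle relations is exactly matched by the two-fold choice of preimage.

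First I would verify that the image lands in $\Rparst$. Fix $(a_1,\dots,a_N)\in\QQ^*$ and a crossing $\cc_n$. At a positive crossing the relation reads $\pm a_j = a_i \qr a_k = \M(a_k)^{-1}a_i$. Applying $\M$ to both sides, using $\M(v)=\M(-v)$ (Proposition \ref{prop:paramap}) on the left and the equivariance $\M(Ab)=A\,\M(b)\,A^{-1}$ of Lemma \ref{lem:paraequi} with $A=\M(a_k)^{-1}$ on the right, turns the relation into $\M(a_j)=\M(a_k)^{-1}\M(a_i)\M(a_k)$. This is precisely the Wirtinger relation $\bm\wg_j=\bm\wg_k^{-1}\bm\wg_i\bm\wg_k$ of Figure \ref{fig:WirtingerRelation} after the substitution $\bm\wg_\ell=\M(a_\ell)$; the negative crossing is identical with $\ql$ and $A=\M(a_k)$. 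Since each $\M(a_\ell)\in\P$ has $\tr=2$ by construction, $(\M(a_1),\dots,\M(a_N))$ satisfies all defining relations of $\Rparst$. Regularity is immediate, as $\M$ is given by polynomials in the coordinates of $\Cbb^2$.

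Next I would establish surjectivity and the fiber description simultaneously. Given $\rho=(\bm\wg_1,\dots,\bm\wg_N)\in\Rparst$, Proposition \ref{prop:paramap} supplies for each $i$ a preimage $a_i\in\M^{-1}(\bm\wg_i)$, and these may be chosen completely independently. The crucial point is that \emph{any} such choice already lies in $\QQ^*$: at a positive crossing, $a_i\qr a_k=\M(a_k)^{-1}a_i$ is, by the same equivariance computation, a preimage of $\bm\wg_k^{-1}\bm\wg_i\bm\wg_k=\bm\wg_j$, hence equals $a_j$ or $-a_j$, so the sign-ambiguous relation $\pm a_j=a_i\qr a_k$ holds automatically (and trivially with $+$ when the vectors vanish). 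Thus $\M^{-1}(\rho)$ is exactly the product $\prod_i \M^{-1}(\bm\wg_i)$ of the local fibers, which gives surjectivity at once.

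Finally I would read off the counts from Proposition \ref{prop:paramap}. For a knot all meridional generators are conjugate, so $\bm\wg_i=\id$ for a single $i$ forces $\rho=\rho_\circ$; hence for $\rho\in\Rnt$ every $\bm\wg_i\neq\id$, each local fiber $\M^{-1}(\bm\wg_i)$ has exactly two elements, and $|\M^{-1}(\rho)|=2^N$. For $\rho_\circ$ each $\bm\wg_i=\id=\M(\nullvec)$, and since $\M$ is injective at the origin the unique preimage is $(\nullvec,\dots,\nullvec)$. The conceptual heart of the statement — and the step to treat with care — is the clean decoupling in the surjectivity argument: one must confirm that no compatibility constraint links the independent sign choices at distinct arcs, which is exactly why the $\pm$ appears in the quandle relation and why the fiber is the full product $2^N$ rather than a proper subset of it.
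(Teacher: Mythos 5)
Your proof is correct and follows essentially the same route as the paper's: use the equivariance of $\M$ together with its two-to-one nature to show that arbitrary choices of local preimages automatically satisfy the sign-ambiguous quandle relations, so the fiber over $\rho$ is the full product of the local fibers. You are in fact slightly more careful than the paper, since you explicitly verify that the image lands in $\Rparst$ and justify (via conjugacy of the Wirtinger generators of a knot) why no $\bm\wg_i$ can be the identity when $\rho$ is non-trivial, a point the paper leaves implicit.
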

\begin{proof}
	Let us consider a non-trivial representation $\rho$ and its Wirtinger generators 
	$$(A_1,\dots A_N)\in \Rnt.$$ 
	For any $A_i \in \P$, we can find $a_i\in \Q$ such that $\M(a_i)=A_i$. Then  $(a_1,\dots,a_N)$ should be contained in $\M^{-1}((A_1,\dots ,A_n))$ since
	the following observation by Lemma \ref{lem:paraequi} and (\ref{eqn:quandledef}),
	\begin{equation}\label{eqn:signedrel} 
		\begin{aligned}
			&\pm c= a\qr b &\text{ if and only if } &&&  \M (c)=\M (b)^{-1} \M (a) \M(b), \\
			&\pm c= a\ql b &\text{  if and only if }&&&  \M (c)=\M (b) \M (a) \M (b)^{-1}.
		\end{aligned}
	\end{equation}	
	
	Moreover, 
	 $\M$ is always well-defined for any sign-choice of arcs  
	since there are two preimages $+a_i$ and $-a_i$ of the same $A_i$ for each arc.
	Unless $a_i=\nullvec$, it assures that there are $2^N$ preimages of $\M$ where $N$ is the number of arcs in diagram $D$.
\end{proof}
Each entry of  $\aa=(\aa_1,\dots,\aa_N)$ of $\QQ$ assigned to each arc is called an \emph{arc-coloring} traditionally and we also  say  each arc is colored by $\Q$ and each $\aa_i$ is a \emph{coloring vector}. Remind that we use the notation $\arc_i$ for an arc and   $\aa_i$ for the corresponding coloring vector.  
The \emph{associated representation} $\rho_\aa:\Gk\to\sl$ with respect to an arc-coloring $\aa$ is  obtained by $$\rho_\aa(\wg_i):=\M(\aa_i)$$ 
where $\aa_i$ is the \emph{coloring vector} corresponding to Wirtinger generator $\wg_i$. 
Conversely, we say $\aa_{\rho}$ is the \emph{associated arc-coloring} with respect to a representation $\rho$ as well.
Now, let us define our main concern $$\QQ := \M^{-1}(\Rpar)$$,
called a \emph{parabolic quandle system} (with sign-ambiguity) and  each element gives  a parabolic representation by a regular function $\M$. Obviously $\QQ$ is affine variety. One of the goals in the paper is to construct a nice subvariety of $\QQ$ with a (set-theoretic) bijection with $\Xpar$.
\begin{rmk}
	One can consider $\QQ_+$ naively just as removing the sign-ambiguity from $\QQ$, i.e., replacing   $\pm$ with $+$  in Figure \ref{fig:QuandleRelationWithSignAmbiguity}. In this case, the quandle axioms are still established and  $\M$ is a well-defined map as well. However, the  parabolic quandle map $\M$ combined together along  $\D$ cannot be surjective to $\Rpar$ in general. To be precise, only a boundary-unipotent representation can be obtained through  $\QQ_+$. See Section \ref{sec:obs} for details.
\end{rmk}



\subsection{Symplectic quandle}\label{sec:symquan}

We use symplectic quandle to investigate the structure of parabolic quandle equations. The notion of symplectic quandle  seems to appear first  in \cite{yetter_quandles_2003, navas_symplectic_2008} and it was a key feature of computations in \cite{jo_symplectic_2020}. 
Let us begin with the definition of symplectic quandle.
\begin{defn}
	Let $R$ be a commutative ring with the unity. A free module $M$ over $R$ equipped with an antisymmetric bilinear form $\lrbar{ ,} : M \times M \rightarrow R$ is called a \emph{symplectic quandle} where the quandle operation $\qr$ is given by 
	\begin{equation}\label{eqn:stracal}
		x \qr y := x+  \lrbar{ x, y}   y   
	\end{equation}
	for all  $x,y \in M$.
\end{defn}
From the property of the symplectic form $\lrbar{,}$, it is easily verified that the operation $\qr$ satisfies the quandle axioms and the inverse operation $\qr^{-1}$ is given by $x \qr^{-1} y = x-\lrbar{ x, y } y$.
We shall use the symplectic quandle approach for $\Q$ with the bilinear form given by the determinant, 
$$\lrbar {a,b}  := \textrm{det}(a,b) ~~\text{ for }a,b \in \Q.$$ 
Then the definition of (\ref{eqn:stracal}) is exactly the same as in Section \ref{sec:paraquan}, i.e.,
\begin{equation}
	a+\lrbar{ a,b} b = a \qr b = \M(a)^{-1} b ~~\text{ for }a,b \in \Q.
\end{equation}
Furthermore, we introduce a convenient notation $\widehat a$ for a column vector $a \in \Cbb^2$ by 
\begin{equation}\label{eq:} 
	\widehat a=\widehat{\qvec{x}{y}} := (-y,~ x).
\end{equation}
Then we have an expression for $\M$ as follows, 
\begin{equation}\label{eqn:Mqbyhat}
	\M(a) = I+ a \widehat{a} ~~\text{ and }~~ \M(a)^{-1} = I- a \widehat{a} ~~ \text{ for } a \in \Q,\\
\end{equation}
where $I$ is the identity matrix $\sm{1&0\\0&1}$.
From this, we can easily obtain that for $a\in\Q \text{ and } A\in\sl$,
\begin{equation}\label{eqn:reverse}
	\M(\pm a)=A ~~\text{ if and only if }~~ \M(\pm \sqrtii a)=A^{-1}.
\end{equation}
Moreover,   $\lrbar{ a,b} = \widehat a  b ~$ for $a,b\in\Q$ and hence
\begin{equation}\label{eqn:symphat}
	a\qr b = a + (\widehat  a b) b ~~\text{ and }~~ a \ql b = a-   (\widehat a b) b ~~\text{ for } a,b\in\Q.
\end{equation}
Although  the symplectic quandle and hat-notation may not be essential  mathematically, many parts in  
quandle computation become   convenient practically by using the notations.

\subsection{Riley polynomial}\label{sec:originalRiley}
Let us briefly review \emph{Riley polynomial}.  See \cite[Theorem 2]{riley_parabolic_1972} for details.
For a two-bridge knot $K$, the knot group $\Gk$  has a presentation, due to Schubert, of two generators $x_1$ and $x_2$ and a single relation. Riley computed all non-abelian parabolic representations $\rho$ as follows. We can put 
\begin{equation}\label{eqn:twogenerator}
	\rho(x_1) = \begin{pmatrix} 1 & 1 \\ 0 & 1 \end{pmatrix}  \textrm{ and } \rho(x_2) = \begin{pmatrix} 1 & 0 \\ -y & 1 \end{pmatrix}
\end{equation}
up to conjugation and these two matrices satisfy the one matrix relation  if and only if there exists a non-abelain representation $\rho$.  In \cite[Theorem 2]{riley_parabolic_1972}, he showed that the matrix relation is reduced to solving a single integral polynomial $\Riley(y)\in\Zbb[y]$. 
Riley named it \emph{representation polynomial} and is now  called  \emph{Riley polynomial}.
We can summarize that  Riley established a bijection,  
$$\{y\in \Cbb \mid \Riley(y)=0\}  ~\overset{\cong}{\longrightarrow}~ \Xpar ~~\text{ 
by } y \mapsto [\rho_y].$$
Furthermore, he showed the followings. 
\begin{enumerate}
	\item $\Riley(y)$ is  monic.\footnote{ In Riley's original notation and definition,
		$
		\Lambda(y)= 1+ c_1 y + \dots + c_{\lambda-1} y^{\lambda-1} +(-1)^{\lambda} y^\lambda \in \Zbb[y]
		$, i.e., the   constant term is fixed by $+1$. For the generalized Riley polynomial in this paper, the constant term may not be $\pm1$ in $\Zbb[y]$.  So we define the Riley polynomial $R(y)$ to be  with the monic leading coefficient, i.e., $R(y)=(-1)^\lambda \Lambda(y)$. }
	\item The constant term is $\pm1$.
	\item $\Riley(y)$ doesn't have any multiple root.
\end{enumerate}
Recently, Jo and Kim  \cite{jo_symplectic_2020} showed  that some additional properties of $\Riley(y)$ as follows
\begin{enumerate}
	\setcounter{enumi}{3}
	\item  Riley polynomial is square-splitting, i.e., $\Riley(u^2)=(-1)^{\deg(g)}g(u)g(-u)$ for $g(u)\in\Zbb[u]$.
	\item  the root $u$ of $y$ is contained in the trace field $\trfield$ as a unit of the ring of integers where  $\rho$ is the associated representation $\rho_y$. 
\end{enumerate}
We are going to generalize the Riley polynomial to a general knot, not limited to two bridge knot.


\section{Parabolic quandle with sign-type}\label{sec:paraquandlesigntype}
Let us introduce \emph{sign-type} of a parabolic quandle system.
\subsection{Sign-types}
Let $\aa=(\aa_1,\dots,\aa_N) \in \QQ(D)$ of a knot diagram $D$. 
We will elaborate parabolic quandle system $\QQ$ by specifying a sign choice. 
\begin{defn}\label{def:signedrel}
	A \emph{parabolic quandle system $\Qe$ with  sign-type} $$\ee=(\e_1,\dots,\e_N)  \in \{ \pm 1\}^N$$ is the subset of $\QQ$ satisfying the following equation at each crossing $\cc_n$ for $n=1,\dots,N$ as in Figure \ref{fig:QuandleRelationWithSignType}.
		An element of $\Qe$ is called an  \emph{arc-coloring  with sign-type~$\ee$}.
\end{defn}

\begin{figure}[H]
	\begin{equation*}
			\vcenter{\hbox{\begin{tikzpicture}
						\draw[-stealth,thick] (0,0) -- (1,1);
						\draw[line width=5pt,white]  (0,1)--(1,0);
						\draw[-stealth,thick]  (0,1)--(1,0) ;
						\node at (0.5,0.5) [above, yshift=0.4ex] {$\cc_n$};
						\node at (1,1) [xshift=1.5ex] {$\arc_j$};
						\node at (0,0) [xshift=-1ex] {$\arc_i$};
						\node at (1,0)  [xshift=1.5ex] {$\arc_k$};
					\end{tikzpicture}
			}}
			\leftrightarrow \e_n a_j=a_i \qr a_k,~~~ 
			\vcenter{\hbox{\begin{tikzpicture}
						\draw[-stealth,thick]  (0,1)--(1,0) ;
						\draw[line width=5pt,white]  (0,0)--(1,1);
						\draw[-stealth,thick] (0,0) -- (1,1);
						
						\node at (0.5,0.5) [above, yshift=0.4ex] {$\cc_n$};
						\node at (1,1) [xshift=1.5ex] {$\arc_k$};
						\node at (0,1) [xshift=-1ex] {$\arc_i$};
						\node at (1,0)  [xshift=1.5ex] {$\arc_j$};
					\end{tikzpicture}
			}}
			\leftrightarrow \e_n a_j=a_i \ql  a_k.
		\end{equation*}	
		\caption{Parabolic quandle relation with sign-type}
\label{fig:QuandleRelationWithSignType}
\end{figure}

	 We  remark about the redundancy of quandle relation with sign-type.
	Any single choice of relation $r_i$  in $\QQ$ (with sign-ambiguity) is redundant since  the single   relation $\wr_i$ among Wirtinger relations is derived from the other relations. The corresponding statement for $\Qe$ is as follows.  
	\begin{prop}\label{prop:signed1rel}
		Let $\Qe=\{(a_1,\dots,a_N) \in \Q^N \mid r_1^\e,\dots,r_{N}^\e \}$ where the relations are given by Definition \ref{def:signedrel}. Let $\aa_*$ be an arc-coloring satisfying only $N-1$ relations with removing $r_m^\e$  as follows
		$$ \aa_* \in \{(\aa_1,\dots,\aa_N) \in \Q^N \mid r_1^\e,\dots,r_{m-1}^\e, r_{m+1}^\e,\dots,r_{N}^\e \}.$$ 
		In this case, $\aa_*$ satisfy $r_m^{\e}$  with sigh-ambiguity $\e_m\in \{\pm1\}$ as in Figure \ref{fig:QuandleRelationWithSignAmbiguity}.
	\end{prop}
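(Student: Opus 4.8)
The plan is to push the whole statement through the parabolic quandle map $\M$, reduce it to the classical redundancy of the Wirtinger presentation, and then pull it back, with the sign-ambiguity appearing exactly because $\M$ is two-to-one. First I would set $A_i:=\M(\aa_i)\in\P$ for the given arc-coloring $\aa_*=(\aa_1,\dots,\aa_N)$ and read each hypothesis through the dictionary (\ref{eqn:signedrel}): for $i\neq m$ the signed relation $r_i^\e$ relates the three coloring vectors at $\cc_i$ by $\qr$ for a positive crossing (resp. $\ql$ for a negative crossing) up to the sign $\e_i$, and since $\M(\pm v)=\M(v)$, equation (\ref{eqn:signedrel}) converts $r_i^\e$ into precisely the Wirtinger relator $\wr_i$ of Figure \ref{fig:WirtingerRelation} evaluated on the $A_i$. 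Hence $(A_1,\dots,A_N)$ is a tuple of matrices in $\P$ satisfying all Wirtinger relators except possibly $\wr_m$. Note that only the pointwise identity (\ref{eqn:signedrel}) is used here, so nothing is assumed about $\aa_*$ beyond $A_i\in\P$; in particular $\aa_*$ need not lie in $\QQ^*$.

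The second step is to invoke the redundancy already recalled before the statement: in the Wirtinger presentation of a knot the relator $\wr_m$ lies in the normal closure of $\{\wr_i\}_{i\neq m}$ in the free group on $\wg_1,\dots,\wg_N$, i.e. $\wr_m$ is a product of conjugates of the remaining relators. As this is a formal identity in the free group, it is preserved by the assignment $\wg_i\mapsto A_i$ into $\sl$; since each $\wr_i$ with $i\neq m$ evaluates to $I$ on the $A_i$, so does every conjugate of it, and therefore $\wr_m$ evaluates to $I$ as well. Thus $(A_1,\dots,A_N)$ satisfies the remaining Wirtinger relator $\wr_m$ automatically. I would either cite the standard deficiency-one fact for knot groups or record this free-group identity as a one-line lemma, so that the transfer is self-contained and does not secretly require $(A_1,\dots,A_N)$ to already be a genuine representation.

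Finally I would transfer back at the crossing $\cc_m$. Writing its three arcs as $\arc_i,\arc_j,\arc_k$ as in Figure \ref{fig:QuandleRelationWithSignAmbiguity}, the relator $\wr_m$ on matrices reads $\M(\aa_j)=\M(\aa_k)^{-1}\M(\aa_i)\M(\aa_k)$ for a positive crossing (resp. the conjugate form for a negative crossing). Reading (\ref{eqn:signedrel}) in the reverse direction now yields $\pm\aa_j=\aa_i\qr\aa_k$, that is $\e_m\aa_j=\aa_i\qr\aa_k$ for some $\e_m\in\{\pm1\}$, which is exactly $r_m$ with sign-ambiguity as in Figure \ref{fig:QuandleRelationWithSignAmbiguity}. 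The one point that genuinely needs care, and which I regard as the only real subtlety, is this recovery of the sign: because $\M$ is two-to-one off the origin (Proposition \ref{prop:paramap}), the matrix equation pins down $\aa_j$ only up to sign, so one cannot hope to recover the prescribed sign of the fixed sign-type $\ee$, but only some sign $\e_m$. This is precisely why the conclusion must be phrased with sign-ambiguity rather than as $r_m^\e$ with the original $\e_m$.
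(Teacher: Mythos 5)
Your proof is correct and follows essentially the same route as the paper's (one-line) proof: transfer the $N-1$ signed quandle relations to Wirtinger relations via $\M$, invoke the standard redundancy of one Wirtinger relator, and transfer back, with the sign-ambiguity arising because $\M$ is two-to-one. You merely make explicit the two points the paper leaves implicit — that the redundancy is a free-group identity (so it applies to any assignment of matrices in $\P$, not only to genuine representations) and that the two-to-one fibre of $\M$ is exactly what forces the conclusion to be stated only up to sign.
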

	\begin{proof}
		The corresponding $N-1$ Wirtinger relations except $r_m$ are satisfied and hence the remaining  relation $r_m$ is automatically satisfied with sign-ambiguity.
	\end{proof}
	But $\aa_*$ may or may not satisfy the $r_m^\e$ strictly. The obstruction class of $\rho$ affects the sign of $\e_m$. See Section \ref{sec:obs}. 

\subsection{Total-sign}	\label{sec:totalsign}

	 We have several basic properties on sign-type as follows. 
\begin{lem}\label{lem:signtypelem}
	\begin{enumerate}
		\item $\QQ$ is the disjoint union of  $\Qe$'s, i.e., 
		\begin{equation}\label{eqn:signtypedecomp}
			\QQ = \bigsqcup_{\ee\in \{\pm1\}^N} \Qe.
		\end{equation}	
		\item  For a restriction  of $\M$ to $\Qe$, i.e.,   
		$\Me:=\restr{\M}{\Qe} : \Qe \to \Rpar$,  is a $2$-to-$1$ map and the preimage consists of $\{+\aa,-\aa \}$ unless $\Qe$ is an empty set. 
	\end{enumerate}
\end{lem}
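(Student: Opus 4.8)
The plan is to read off both statements from Proposition \ref{prop:surjM} once I understand how an arcwise change of sign interacts with the signed quandle relations. I first note that every coloring vector occurring in $\QQ=\M^{-1}(\Rpar)$ is nonzero: if $\rho\in\Rpar$ then its Wirtinger generators are mutually conjugate non-identity meridians, so $\M(\aa_i)=I+\aa_i\widehat{\aa_i}\neq I$ forces $\aa_i\neq\nullvec$ for every $i$.

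For part (1) I would define a sign map $\QQ\to\{\pm1\}^N$ sending $\aa$ to $(\e_1(\aa),\dots,\e_N(\aa))$, where $\e_n(\aa)$ is the sign realizing the quandle relation of Figure \ref{fig:QuandleRelationWithSignAmbiguity} at crossing $\cc_n$. This assignment is single-valued: at a positive crossing the relation reads $\pm\aa_j=\aa_i\qr\aa_k$, and since $\aa_j\neq\nullvec$ the two signs cannot both hold, so exactly one does (and likewise for negative crossings using $\ql$). By Definition \ref{def:signedrel}, $\Qe$ is exactly the fiber of this map over $\ee$, which gives the disjoint decomposition \eqref{eqn:signtypedecomp} at once.

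The key computation for part (2) is that a global or partial sign change acts on the operations transparently. Writing the operation in symplectic form \eqref{eqn:symphat}, for $\eta_i,\eta_k\in\{\pm1\}$ bilinearity of $\lrbar{\,,}$ together with $\eta_k^2=1$ give
\[
(\eta_i\aa_i)\qr(\eta_k\aa_k)=\eta_i\aa_i+\lrbar{\eta_i\aa_i,\eta_k\aa_k}\,\eta_k\aa_k=\eta_i(\aa_i\qr\aa_k),
\]
and the same identity holds for $\ql$. Hence if $\aa$ has sign-type $\ee$ and $\eta=(\eta_1,\dots,\eta_N)\in\{\pm1\}^N$, then $\eta\cdot\aa:=(\eta_1\aa_1,\dots,\eta_N\aa_N)$ satisfies the relation at $\cc_n$ with new sign $\eta_i\eta_j\,\e_n$, where $\arc_i,\arc_j$ are the two under-arcs at $\cc_n$. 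In particular the global flip $\eta\equiv-1$ preserves the sign-type, so $\aa\in\Qe$ implies $-\aa\in\Qe$.

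Finally, I would fix $\rho$ in the image of $\Me$ and $\aa\in\Me^{-1}(\rho)$. By Proposition \ref{prop:surjM} every preimage of $\rho$ is $\eta\cdot\aa$ for a unique $\eta$, and membership in $\Qe$ forces $\eta_i\eta_j=1$, i.e. $\eta_i=\eta_j$, at each crossing. This is where the topology enters: for a knot the two under-arcs at the crossings are the consecutive arcs of a single traversal, so the relations $\eta_i=\eta_j$ link all arcs into one cycle and force $\eta$ to be constant, $\eta\equiv+1$ or $\eta\equiv-1$. Thus $\Me^{-1}(\rho)=\{+\aa,-\aa\}$, two distinct points since $\aa\neq\nullvec$, proving $\Me$ is $2$-to-$1$. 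I expect the main obstacle to be phrasing the connectedness step cleanly---namely that the under-arc incidence forms a single cycle precisely because $K$ is a knot---since this is exactly the step that would fail for links and there yield a larger fiber.
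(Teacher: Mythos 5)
Your proof is correct and follows essentially the same route as the paper: disjointness from the nonvanishing of the coloring vectors (so the sign at each crossing is uniquely determined), and the $2$-to-$1$ claim from the computation $(\eta_i\aa_i)\qr(\eta_k\aa_k)=\eta_i(\aa_i\qr\aa_k)$ showing a sign change $\eta$ multiplies the sign at a crossing by $\eta_i\eta_j$ for the two under-arcs. The only difference is that you make explicit the step the paper leaves implicit — that for a knot the under-arc incidences form a single cycle, forcing $\eta$ to be constant — which is a welcome clarification and is precisely the point that fails for links.
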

\begin{proof}
	Suppose that $\aa \in \Qe \bigcap \QQ_{\ee'}$, i.e., there is the same $\aa$ for the different $\ee$ and $\ee'$. Then $\aa_n=-\aa_n=0$   since  $\e_n \neq \e'_n$ for some $n$  in Figure \ref{fig:QuandleRelationWithSignType}, and  all coloring vectors  in $\aa$ must be zero. 
	For the second assertion, it is obvious that if $\aa \in \Qe$ then $-\aa$ is also in $\Qe$. If   $\aa' \in \Qe$ different from $\pm\aa$, there is a pair of $\aa_i$ and $\aa_j$ such that  $\aa_i'=\pm\aa_i$ and $\aa_j'=\mp \aa_j$. It implies that the corresponding sign-types are mismatched. It contradicts  $\aa'\in \Qe$.
\end{proof}


Let us consider  the set of all possible sign-types for $\rho$, 
$$S(\rho) := \{ \e \in \{+,-\}^N \mid  \Me^{-1}(\rho)  \neq \varnothing\}.$$
Then, we can count the number of sign-types in $S(\rho)$ as follows. 
\begin{lem}
	 $|S(\rho) | = 2^{N-1}$.
\end{lem}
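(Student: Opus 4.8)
The plan is to prove the equality by a direct fiber-counting argument, exploiting the two facts already established about the quandle map and the sign-type decomposition. The starting observation is that the total fiber $\M^{-1}(\rho)$, computed inside the full system $\QQ$, has a known cardinality: since $\rho$ is a (non-abelian, hence non-trivial) parabolic representation, $\rho \in \Rnt$, and Proposition \ref{prop:surjM} gives $|\M^{-1}(\rho)| = 2^N$. Concretely these $2^N$ points are obtained from any one arc-coloring $\aa$ lying over $\rho$ by independently flipping the sign of each of the $N$ coloring vectors.

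Next I would restrict this count along the sign-type stratification. By Lemma \ref{lem:signtypelem}(1) the system $\QQ$ is the disjoint union $\bigsqcup_{\ee}\Qe$, so the fiber over $\rho$ splits compatibly:
\begin{equation*}
	\M^{-1}(\rho) \;=\; \bigsqcup_{\ee \in \{\pm1\}^N} \bigl(\Qe \cap \M^{-1}(\rho)\bigr) \;=\; \bigsqcup_{\ee \in \{\pm1\}^N} \Me^{-1}(\rho).
\end{equation*}
Every signed coloring over $\rho$ therefore carries a well-defined sign-type, and colorings of different sign-types never coincide (again Lemma \ref{lem:signtypelem}(1)), so there is no over-counting.

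The remaining input is the exact size of each nonempty stratum of the fiber. For a sign-type $\ee$ with $\Me^{-1}(\rho) \neq \varnothing$ --- that is, precisely for $\ee \in S(\rho)$ --- Lemma \ref{lem:signtypelem}(2) says $\Me\colon \Qe \to \Rpar$ is $2$-to-$1$ with fiber $\{+\aa,-\aa\}$, so $|\Me^{-1}(\rho)| = 2$. Summing the cardinalities of the strata then gives
\begin{equation*}
	2^N \;=\; |\M^{-1}(\rho)| \;=\; \sum_{\ee \in S(\rho)} |\Me^{-1}(\rho)| \;=\; 2\,|S(\rho)|,
\end{equation*}
and dividing by $2$ yields $|S(\rho)| = 2^{N-1}$, as claimed.

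Since both ingredients are already proved, there is essentially no hard step here; the only point to watch is that the two structural facts --- the disjointness of the strata $\Qe$ and the uniform $2$-to-$1$ behaviour of each nonempty $\Me$ --- are exactly what convert the ambient fiber count $2^N$ into a count of realized sign-types. If anything, the subtlety worth flagging is the implicit use that the sign flips $\aa \mapsto (\pm\aa_1,\dots,\pm\aa_N)$ genuinely populate all $2^N$ preimages, which holds because $\aa$ has no zero coloring vector for a non-trivial $\rho$; this is precisely what makes the ambient count exactly $2^N$ rather than smaller, and hence what pins down $|S(\rho)|$ on the nose.
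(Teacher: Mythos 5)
Your argument is correct and is essentially the same as the paper's: both count the full fiber $|\M^{-1}(\rho)|=2^N$ via Proposition \ref{prop:surjM}, split it into the disjoint strata $\Me^{-1}(\rho)$ using Lemma \ref{lem:signtypelem}, and divide by the uniform stratum size $2$. Your write-up is just a slightly more explicit packaging of the same counting.
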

\begin{proof}
	Consider the surjectivity of $\M$ in Proposition \ref{prop:surjM} and hence $|\M^{-1}(\rho)|=2^N$ . If $\Me^{-1}(\rho)\neq\varnothing$ then  $|\Me^{-1}(\rho)|=2$ by Lemma \ref{lem:signtypelem} and hence  $|S(\rho) | \geq 2^{N-1}$.  As considering $\Me^{-1}(\rho) \cap \M_{\ee'}^{-1}(\rho) =\varnothing$ for $\ee'\neq\ee$, we conclude that  $|S(\rho) | = 2^{N-1}$.  
\end{proof}

Let us define the \emph{total-sign} $[\ee]$ of a sign-type $\ee$ to be the product of all $\e_i$'s, i.e., $[\ee] :=\ee_1  \ee_2\cdots \ee_N$. Then 
\begin{prop}\label{prop:totalsignconstant}
	$[\ee] \in \{ +1,-1\}$ is constant on $\M^{-1}(\rho)$ 
\end{prop}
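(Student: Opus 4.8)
The plan is to reduce the statement to understanding how the sign-type transforms across the $2^N$ sign choices that parametrize the fiber $\M^{-1}(\rho)$, and then to show that the total-sign $[\ee]$ is invariant under each elementary sign change. First I would fix a reference coloring $\aa=(\aa_1,\dots,\aa_N)\in\M^{-1}(\rho)$. By Proposition \ref{prop:surjM}, the fiber $\M^{-1}(\rho)$ consists of exactly the $2^N$ colorings $(s_1\aa_1,\dots,s_N\aa_N)$ with $(s_1,\dots,s_N)\in\{\pm1\}^N$, since $\M(\aa_i)=\M(-\aa_i)$ and each arc has precisely two preimages under $\M$ (Proposition \ref{prop:paramap}). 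Each such coloring lies in a unique $\Qe$ by Lemma \ref{lem:signtypelem}, so it carries a well-defined total-sign. The group $\{\pm1\}^N$ is generated by the elementary flips $\aa\mapsto(\aa_1,\dots,-\aa_m,\dots,\aa_N)$, hence any two elements of the fiber are joined by a sequence of single-arc sign reversals, and it suffices to show that one such reversal preserves $[\ee]$.

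The crux is a crossing-by-crossing computation of the effect of a single flip $\aa_m\mapsto-\aa_m$ on the sign-type, using the defining relations of Figure \ref{fig:QuandleRelationWithSignType}. Consider a positive crossing $\cc_n$ with relation $\e_n\aa_j=\aa_i\qr\aa_k=\M(\aa_k)^{-1}\aa_i$. The key observation is that $\M$ is even in its argument, i.e.\ $\M(\aa_k)=\M(-\aa_k)$, so flipping the over-arc (the $\aa_k$ role) leaves both sides, and therefore $\e_n$, unchanged. Flipping the incoming under-arc (the $\aa_i$ role) negates the right-hand side, forcing $\e_n\mapsto-\e_n$; flipping the outgoing under-arc (the $\aa_j$ role) negates the left-hand side, again forcing $\e_n\mapsto-\e_n$. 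The negative-crossing case with $\ql$ is identical. Thus $\e_n$ changes sign exactly when the flipped arc $\arc_m$ appears in an under-arc role at $\cc_n$, and never when it appears only as an over-arc.

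Finally I would invoke the combinatorics of a knot diagram: since each arc runs from one under-crossing to the next, a given arc is the incoming under-arc at exactly one crossing and the outgoing under-arc at exactly one other crossing, appearing in the over-arc role at all remaining crossings it meets. Consequently a single flip $\aa_m\mapsto-\aa_m$ reverses exactly two of the signs $\e_n$, so the product $[\ee]=\e_1\cdots\e_N$ is unchanged; combined with the connectivity from the first step this gives constancy of $[\ee]$ on $\M^{-1}(\rho)$. I expect this last bookkeeping, rather than the local algebra, to be the main point to pin down: one must confirm that the two under-arc incidences of a fixed arc occur at \emph{distinct} crossings (so that two, not zero, signs flip), which follows from the bijection between arcs and crossings in a knot diagram. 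I would note for robustness that even in a degenerate case where the two incidences coincide at a single crossing, the same local computation yields \emph{zero} net flips there, so the number of reversed signs is always even and $[\ee]$ is preserved in every case.
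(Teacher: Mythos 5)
Your proof is correct and follows essentially the same route as the paper's: reduce to a single sign-flip $\aa_m\mapsto-\aa_m$ via the $2^N$-element fiber structure, and observe that such a flip reverses exactly the two signs $\e_n$ at the initial and terminal crossings of the arc $\arc_m$ (where it plays an under-arc role), leaving the total-sign $[\ee]$ unchanged. Your additional justification that the over-arc role is harmless because $\M$ is even, and your check of the degenerate case where the two under-incidences coincide, are correct refinements of the paper's more terse argument.
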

\begin{proof}
	Consider $\aa=\{\aa_1,\dots,\aa_N\}$ and $\aa'=\{\aa'_1,\dots,\aa'_N\}$ in  $\M^{-1}(\rho)$. Since $\aa_i=\pm\aa'_i$ for each $i$, we can   transform $\aa$ into  $\aa'$ by a finite sequence of the sign-change of a coloring vector $\aa_i$ and it suffices to consider only one step. If one change the sign of a single coloring vector $\aa_n$ into $-\aa_n$ then the corresponding change $\ee'$ of the sign-type $\ee=(\ee_1,\dots,\ee_N)$ only occur at two $\ee_i$ and $\ee_j$ which are the initial and terminal crossings of the arc $\aa_n$. Therefore $[\ee]=[\ee']$ and it completes the proof  without loss of generality.  
\end{proof}

\begin{nota}\label{nota:crossingindex}
	For convenience, from now on, we choose  compatible indices for arcs $\{\arc_1,\dots,\arc_N\}$ and crossings $\{\cc_1,\dots,\cc_N\}$ as taking the ending point of each oriented arc $\arc_i$ to be $\cc_i$. Sometimes,
	an index $i \in \Zbb$ is outside the range of $\{1,\dots,N\}$, in which case it is always considered  modulo $N$.  The indices of $\e_i$ and $\aa_i$ also follow those of $\cc_i$ and $\arc_i$ respectively.	
	\end{nota}

Then we can write down an explicit bijection between $\Qe$ and $\QQ_{\e'}$ as follows.
\begin{prop}\label{prop:Qebijection}
	For any given $\tau \in \{+1,-1\}$, we have
	$$\Phi: \Qe \to \QQ_{\ee'} ~\text{  by }~ a_i \mapsto a'_i:= \tau 	a_i \prod_{k=1,\dots,i-1} { \e'_k}/{\e_k} $$
	with $\tau=a'_1/a_1$.

\end{prop}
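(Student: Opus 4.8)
The plan is to use the compatible indexing of Notation~\ref{nota:crossingindex} to reduce the signed relations to a single propagation rule per crossing, and then to combine this with the elementary fact that the quandle operation is insensitive to a $\pm1$ rescaling of its second argument. In this indexing $\arc_i$ ends at $\cc_i$ and the outgoing under-arc there is $\arc_{i+1}$ (indices mod $N$), so by Definition~\ref{def:signedrel} an element $(a_1,\dots,a_N)\in\Qe$ is governed by the $N$ equations
\[
\e_i\,a_{i+1}=a_i\ast_i a_{k(i)}\qquad(i=1,\dots,N),
\]
where $\ast_i\in\{\qr,\ql\}$ is fixed by the sign of $\cc_i$ and $a_{k(i)}$ is the over-arc at $\cc_i$. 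I will show that $\Phi$ carries such a tuple to a solution of the corresponding $\ee'$-system.

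The lemma I would isolate first is the scaling identity: for $\lambda,\mu\in\{\pm1\}$, $a,b\in\Q$ and $\ast\in\{\qr,\ql\}$,
\[
(\lambda a)\ast(\mu b)=\lambda a\pm\lrbar{\lambda a,\mu b}\,(\mu b)=\lambda a\pm\lambda\mu^2\lrbar{a,b}\,b=\lambda\,(a\ast b),
\]
the point being that $\mu^2=1$ erases the rescaling in the over-arc slot. Writing $P_i:=\prod_{k<i}\e'_k/\e_k$, so that $a'_i=\tau P_i a_i$ with $\tau P_i\in\{\pm1\}$ and $P_{i+1}=P_i\,\e'_i/\e_i$, I would substitute directly: since the over-arc factor $\tau P_{k(i)}$ drops out,
\[
a'_i\ast_i a'_{k(i)}=\tau P_i\,(a_i\ast_i a_{k(i)})=\tau P_i\,\e_i\,a_{i+1}=\e'_i\,\tau P_{i+1}\,a_{i+1}=\e'_i\,a'_{i+1},
\]
using $\e'_iP_{i+1}=P_i\e_i$, valid because $\e_i^2=(\e'_i)^2=1$. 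Hence the $\ee'$-relation is satisfied at every crossing.

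The main obstacle, and the one place where a genuine condition enters, is the closing-up at $i=N$: there $a_{N+1}=a_1$ must agree with the value $\tau P_{N+1}a_1$ produced by the formula, and $P_{N+1}=\prod_{k=1}^{N}\e'_k/\e_k=[\ee']/[\ee]$. So the final relation holds on the nose exactly when $[\ee]=[\ee']$. This is perfectly consistent with the setting: since $\Phi$ only rescales each coordinate by $\pm1$ we have $\M(a'_i)=\M(a_i)$, so $\Phi$ fixes the underlying representation (in particular it stays non-abelian, hence in $\QQ$), and as the total-sign is constant on each fibre $\M^{-1}(\rho)$ by Proposition~\ref{prop:totalsignconstant}, a bijection of this form can only relate sign-types of equal total-sign, for which the closure is automatic and $\Phi(\Qe)\subseteq\QQ_{\ee'}$.

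Bijectivity is then immediate. The map multiplies the $i$-th coordinate by the unit $\tau P_i\in\{\pm1\}$, so it is injective, and the same formula with $\ee$ and $\ee'$ interchanged furnishes a two-sided inverse, the multipliers being unchanged because $\e_k/\e'_k=\e'_k/\e_k$. The normalization $\tau=a'_1/a_1$ is just the $i=1$ instance, where $P_1$ is the empty product $1$.
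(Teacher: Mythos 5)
Your proof is correct and follows essentially the same route as the paper's (one-line) argument: propagate the rescaling inductively along the crossings using the sign-typed quandle relation, with the key point being that the over-arc slot is insensitive to a $\pm1$ rescaling. Your explicit treatment of the closing-up relation at $i=N$ usefully surfaces the hypothesis $[\ee]=[\ee']$ that the paper leaves implicit (it is forced anyway by Proposition~\ref{prop:totalsignconstant}), but the underlying method is the same.
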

\begin{proof}
	For $i=1$, put $a'_1=\tau a_1$.
Then the formula  is obtained by induction on $i$, due to the quandle relation with sign-type in Definition \ref{def:signedrel}.
\end{proof}

Now we know that the total-sign of $\rho$ is an invariant for $\rho$ and a natural question arises asking  the geometric meaning of  the total-sign. In the next section, we will prove that the total sign is nothing but  the obstruction class of $\rho$.

\subsection{Obstruction classes}\label{sec:obs}

For a parabolic representation with  $\tr(\rho(\meri))=+2$ for a meridian $\meri$,  the trace of longitude can be $+2$ or $-2$, i.e., a parabolic $\sl$ representation may or may not be boundary-unipotent. It is exactly determined by an \emph{obstruction class} of $\rho$. Recall that a representation $\widetilde{\rho} : \Gk\to\sl$ is called \emph{boundary-unipotent}  if  $\tr(\widetilde{\rho}(\gamma))=2$ for every loop $\gamma$ in the boundary of a small tubular neighborhood of $K$. The obstruction to a boundary-unipotent lifting can be identified with an element of $\{ \pm 1\}$, which is referred to as the \emph{obstruction class} of $\rho$. We refer \cite{cho_hikamiinoue_2020} for details. 
\begin{prop} \cite[{Proposition 2.2}]{cho_hikamiinoue_2020} \label{prop:knot} Let $\rho :\Gk\rightarrow \psl$ be a boundary-parabolic representation. 
	The obstruction class $\ob(\rho)$ of $\rho$ is half of $\tr(\widetilde{\rho}(\lambda))$ where $\widetilde{\rho}$ is a  $\sl$ lifting of $\rho$ and $\lambda$ is any longitude of $K$.
\end{prop}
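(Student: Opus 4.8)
The plan is to localize the whole question to the peripheral subgroup and then exploit the fact that the longitude is null-homologous. First I would invoke Lemma \ref{lem:pslsllifting}: the boundary-parabolic $\psl$-representation $\rho$ admits an $\sl$-lift $\widetilde\rho$, and in fact a \emph{parabolic} one in which every meridian has trace $+2$. Thus $\widetilde\rho(\meri)$ is a nontrivial unipotent element for each meridian $\meri$, and the only remaining peripheral freedom is the value of $\tr(\widetilde\rho(\lambda))$.

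The key structural observation is that the peripheral subgroup $\gen{\meri,\lambda}\subset\Gk$ is abelian (it is $\Zbb^2$ for a knot), so $\widetilde\rho(\lambda)$ commutes with the nontrivial unipotent $\widetilde\rho(\meri)$. The centralizer in $\sl$ of a nontrivial unipotent, say $\sm{1&1\\0&1}$, is exactly $\{\pm\sm{1&t\\0&1}:t\in\Cbb\}$; hence after conjugating $\widetilde\rho(\meri)$ to standard form we obtain $\widetilde\rho(\lambda)=\pm\sm{1&s\\0&1}$ and therefore $\tr(\widetilde\rho(\lambda))\in\{+2,-2\}$. This produces the candidate value $\tfrac12\tr(\widetilde\rho(\lambda))\in\{\pm1\}$. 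I would then match it with $\ob(\rho)$ through the definition of the obstruction: the obstruction to a boundary-unipotent lift lives in $H^2(S^3\setminus K,\partial;\Zbb/2)\cong H_1(S^3\setminus K;\Zbb/2)\cong\Zbb/2$ by Poincar\'e--Lefschetz duality, and under the identification $\Zbb/2\cong\{\pm1\}$ this group is detected precisely by whether the longitude lifts to a unipotent (trace $+2$) element, so that $\ob(\rho)=\tfrac12\tr(\widetilde\rho(\lambda))$.

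The main obstacle is well-definedness, and this is exactly where the null-homologous property of the longitude is indispensable. Any two $\sl$-lifts of $\rho$ differ by a character $\chi\in H^1(\Gk;\Zbb/2)=\Hom(H_1(\Gk),\{\pm1\})$; since $[\lambda]=0\in H_1(\Gk)$ (the longitude bounds a Seifert surface), we get $\chi(\lambda)=+1$, so $\widetilde\rho(\lambda)$, and in particular $\tr(\widetilde\rho(\lambda))$, does not depend on the chosen lift. Independence of the particular longitude is handled similarly: two longitudes differ by a power of $\meri$, whose image is unipotent and hence leaves the trace of a commuting $\pm$-unitriangular matrix unchanged. Finally I would record that the half-trace is conjugation-invariant and insensitive to the unipotent normalization of $\widetilde\rho(\meri)$, which closes the identification $\ob(\rho)=\tfrac12\tr(\widetilde\rho(\lambda))$ and completes the argument.
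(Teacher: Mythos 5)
Your argument is sound, but there is nothing in the paper to compare it against: Proposition \ref{prop:knot} is imported verbatim from \cite[Proposition 2.2]{cho_hikamiinoue_2020} and the paper supplies no proof, only the citation. What you have written is essentially the self-contained derivation that the reference carries out, and its real content sits in your final paragraph rather than the middle one. Concretely: any two $\sl$-lifts of $\rho$ differ by a character $\Gk\to\{\pm 1\}$, such a character kills the null-homologous longitude but can flip the sign of the meridian, and the centralizer computation forces $\tr(\widetilde{\rho}(\lambda))\in\{\pm 2\}$; together these show that $\tfrac12\tr(\widetilde{\rho}(\lambda))$ is an invariant of $\rho$ and that a boundary-unipotent lift exists exactly when it equals $+1$. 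That is precisely the identification of the cokernel of $H^1(M;\Zbb/2)\to H^1(\partial M;\Zbb/2)$ with the $\lambda$-coordinate, so your middle paragraph (``this group is detected precisely by whether the longitude lifts to a unipotent element'') is not an independent step but a restatement of what the last paragraph proves; the argument is complete once read in that order. One caution worth making explicit: the phrase ``any longitude'' is only safe because you first normalize, via Lemma \ref{lem:pslsllifting}, to the parabolic lift in which every meridian has trace $+2$. For a framed longitude $\lambda\meri^{k}$ with $k$ odd and the \emph{other} lift, the sign of the trace would flip, so one must either take the canonical null-homologous longitude or fix the parabolic lift; your proof makes the latter choice, which matches how the proposition is actually applied in Theorem \ref{thm:obs} with the blackboard-framed longitude.
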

For a parabolic $\sl$ representation $\rho$,
we also refer to the obstruction class of $\rho$  as the obstruction class  of $p\circ \rho$ with $p:\sl\to\psl$.

\begin{thm} \label{thm:obs} Let $\aa=(\aa_1,\dots,\aa_N)$ be an arc-coloring of  $\D$ of sign-type $\ee=(\e_1,\cdots,\e_N)$ with the associated representation $\rho_\aa \in \Rpar$. 
	Then the obstruction class $\ob$ of $\rho_\aa$ is the total-sign of $\rho$, i.e., $\ob=[\ee]=\e_1\cdots \e_n \in \{ \pm 1\}$. 	
\end{thm}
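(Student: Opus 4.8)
The plan is to track how a single coloring vector propagates as we travel once around the knot, and to recognize the resulting holonomy as the longitude acting on the parabolic fixed point. First I would invoke the compatible indexing of Notation \ref{nota:crossingindex}, so that the under-strand entering crossing $\cc_i$ is the arc $\arc_i$, it leaves as $\arc_{i+1}$, indices are read modulo $N$, and $\aa_{N+1}=\aa_1$. Writing $\aa_{k(i)}$ for the over-arc coloring at $\cc_i$ and setting $M_i=\M(\aa_{k(i)})$, the sign-type relations of Definition \ref{def:signedrel} become $\e_i\aa_{i+1}=\M(\aa_{k(i)})^{-1}\aa_i$ at a positive crossing and $\e_i\aa_{i+1}=\M(\aa_{k(i)})\aa_i$ at a negative crossing (using $a\qr b=\M(b)^{-1}a$ and $a\ql b=\M(b)a$). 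With $\sigma_i=-1$ at a positive crossing and $\sigma_i=+1$ at a negative crossing, every step reads $\aa_{i+1}=\e_i\,M_i^{\sigma_i}\,\aa_i$.

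Next I would iterate this recursion once around the knot. Telescoping the $N$ steps and using $\aa_{N+1}=\aa_1$ gives $\aa_1=[\ee]\,L\,\aa_1$ with $L=M_N^{\sigma_N}\cdots M_1^{\sigma_1}$ and $[\ee]=\e_1\cdots\e_N$. Since $[\ee]=\pm1$, this is exactly the eigenvector equation $L\aa_1=[\ee]\,\aa_1$. The matrix $L$ equals $\rho_\aa(\ell)$, where $\ell=\wg_{k(N)}^{\sigma_N}\cdots\wg_{k(1)}^{\sigma_1}$ is the word in the Wirtinger generators obtained by reading off the over-arcs along the strand, i.e. a longitude of $K$; the preferred longitude $\lambda$ differs from $\ell$ only by a power of the meridian $\wg_1$.

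Then I would convert this eigenvector statement into a trace computation. By the identity $\M(a)=\id+a\widehat a$ of equation \eqref{eqn:Mqbyhat}, together with $\widehat{\aa_1}\aa_1=\det(\aa_1,\aa_1)=0$, the vector $\aa_1$ is fixed by the meridian matrix $\M(\aa_1)=\rho_\aa(\wg_1)$. Hence the meridian-power correction relating $\ell$ to $\lambda$ acts trivially on $\aa_1$, so $\rho_\aa(\lambda)\,\aa_1=L\,\aa_1=[\ee]\,\aa_1$. Thus $[\ee]$ is an eigenvalue of $\rho_\aa(\lambda)\in\sl$; since the two eigenvalues multiply to $\det=1$ and $[\ee]=\pm1$, the other eigenvalue is also $[\ee]$, whence $\tr(\rho_\aa(\lambda))=2[\ee]$. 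Applying Proposition \ref{prop:knot} with $\rho_\aa$ itself as the $\sl$-lift yields $\ob(\rho_\aa)=\tfrac12\tr(\rho_\aa(\lambda))=[\ee]$, as claimed.

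The main obstacle I anticipate is bookkeeping rather than the underlying idea: I must verify the precise exponents $\sigma_i$ and the over/under conventions so that $L$ is genuinely the longitude holonomy, and confirm that the homological correction turning $\ell$ into the $0$-framed $\lambda$ is a power of a single meridian fixing $\aa_1$, so that it does not disturb the eigenvalue. Once the recursion $\aa_{i+1}=\e_i\,M_i^{\sigma_i}\,\aa_i$ is pinned down, the telescoping product and the $\det=1$ argument are immediate.
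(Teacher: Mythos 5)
Your proposal is correct and follows essentially the same route as the paper: both arguments iterate the signed quandle relations once around the knot so that the product $\e_1\cdots\e_N$ accumulates against the holonomy of the longitude word, then convert this to $\tr(\rho_\aa(\lambda))=2[\ee]$ and invoke Proposition \ref{prop:knot}. Your packaging of the telescoped recursion as the single eigenvector identity $L\,\aa_1=[\ee]\,\aa_1$ (with $\det L=1$ forcing both eigenvalues to equal $[\ee]$) is a cleaner execution than the paper's expansion of $\rho_\aa(\lb)$ into a sum over subsets followed by an entry-by-entry comparison, and the one piece of bookkeeping you flag --- that under the paper's conventions your $L$ may come out as $\rho_\aa(\lb)^{-1}$ rather than $\rho_\aa(\lb)$ --- is harmless, since the trace and the eigenvalue $[\ee]=[\ee]^{-1}$ are insensitive to inversion.
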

\begin{proof}

	Suppose arc index is given as in Figure \ref{fig:indexalong}. Let $m_i$ be the Wirtinger generator corresponding to an arc $\arc_i$.   
	We may assume $\aa_1 = \sm{1\\0}$, i.e. $\rho_\aa(m_1) =  
	\sm{1&1\\0&1}$
	%
	%
	by conjugation since $\rho_\aa$ is non-trivial.
	
	\begin{figure}[H]
		\centering
		\scalebox{1}{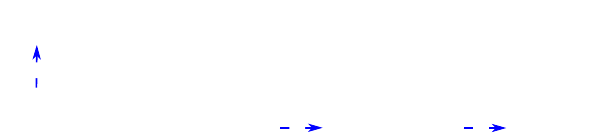}
		\caption{Arc index along a diagram $\D$}
		\label{fig:indexalong}
	\end{figure}
	
	Let $\sigma_k$ be the crossing-sign of the $k$-th crossing from the left in Figure \ref{fig:indexalong}. 
	Then the blackboard framed longitude $\lb$ is given by 
	$$\lb = m_{i_1}^{\sigma_1} \cdots m_{i_n}^{\sigma_n}.$$
	By the definition of $\rho_\aa$ and using the hat-notation in Section \ref{sec:symquan}, we have
	\begin{equation*}
		\begin{array}{rcl}
			\rho_\aa(\lb) &=&  \M(\aa_{i_1})^{\sigma_1} \cdots  \M(\aa_{i_n})^{\sigma_n} \\[3pt]
			&=& (I+\sigma_1 \mkern 2mu \aa_{i_1}\widehat{\aa_{i_1}}) \cdots (I+\sigma_n\mkern 2mu  \aa_{i_n}\widehat{\aa_{i_n}}\big)
		\end{array}
	\end{equation*} 
	Expanding the above equation, we have
	\begin{equation} \label{eqn:lbf} 
		\begin{array}{rcl}
			\rho_\aa(\lb) &=& I + \sum \sigma_{j_1} \sigma_{j_2}\cdots\sigma_{j_k} \aa_{i_{j_1}} \widehat{\aa_{i_{j_1}}} \aa_{i_{j_2}} \widehat{\aa_{i_{j_2}}} \cdots \aa_{i_{j_k}} \widehat{\aa_{i_{j_k}}} \\[3pt]
			&=& I + \sum \sigma_{j_1} \sigma_{j_2}\cdots\sigma_{j_k} \aa_{i_{j_1}} \lrbar {\aa_{i_{j_1}}, \aa_{i_{j_2}}}  \cdots \lrbar {\aa_{i_{j_{k-1}}}, \aa_{i_{j_k}}} \widehat{\aa_{i_{j_k}}} \\[3pt]
			&=& I + \sum \sigma_{j_1} \sigma_{j_2}\cdots\sigma_{j_k} \lrbar {\aa_{i_{j_1}}, \aa_{i_{j_2}}}  \cdots \lrbar {\aa_{i_{j_{k-1}}}, \aa_{i_{j_k}}} \aa_{i_{j_1}} \widehat{\aa_{i_{j_k}}}
		\end{array}
	\end{equation} where the summations are over all possible $1\leq j_1< \cdots < j_k \leq n$. 
	
	On the other hand, from the definition of type $(\e_1,\cdots,\e_n)$ arc-coloring, we have
	\begin{equation*}
		\begin{array}{rcl}
			\e_2 \aa_2 &=& \aa_1 + \sigma_1 \lrbar{ \aa_1, \aa_{i_1}} \aa_{i_1}\\
			\e_3 \aa_3 &=& \aa_2 + \sigma_2 \lrbar {\aa_2, \aa_{i_2} } \aa_{i_2}\\
			&\vdots &\\
			\e_n \aa_n &=& \aa_{n-1} + \sigma_{n-1} \lrbar {\aa_{n-1}, \aa_{i_{n-1}} } \aa_{i_{n-1}} \\
			\e_1 \aa_1 &=& \aa_n + \sigma_n \lrbar {\aa_n, \aa_{i_n} } \aa_{i_n}.
		\end{array}
	\end{equation*} Plugging the first equation to $\e_2$ times the second one, we have
	\begin{equation*}
		\begin{array}{rcl}
			\e_2 \e_3 \aa_3 &=&  \aa_1 + \sigma_1 \lrbar{ \aa_1, \aa_{i_1} } \aa_{i_1}+ \sigma_2\lrbar{ \e_2 \aa_2, \aa_{i_2} } \aa_{i_2} \\
			&=& \aa_1 + \sigma_1 \lrbar{ \aa_1, \aa_{i_1} } \aa_{i_1}+ \sigma_2\lrbar{ \aa_1, \aa_{i_2} } \aa_{i_2}+\sigma_1\sigma_2 \lrbar{ \aa_1, \aa_{i_1} } \lrbar{ \aa_{i_1}, \aa_{i_2} } \aa_{i_2}.
		\end{array}
	\end{equation*} Keep plugging till the last equation, we obtain
	\begin{equation} \label{eqn:ep}
		\begin{array}{rcl} 
			\e_1 \cdots \e_n \aa_1 &=& \aa_1+\sum \sigma_{j_1} \cdots\sigma_{j_k} \lrbar{ \aa_{1}, \aa_{i_{j_1}} }  \cdots \lrbar{ \aa_{i_{j_{k-1}}}, \aa_{i_{j_k}} } {\aa_{i_{j_k}}}
		\end{array}
	\end{equation} where the summation is over all possible $1\leq j_1< \cdots < j_k \leq n$. 
	Letting $\aa_i = \binom{c_i}{d_i} \in \Q$, we have $d_{i_{j_1}}=\lrbar{\aa_1,\aa_{i_{j_1}}}$ and  the first entry of the equation (\ref{eqn:ep}) gives
	\begin{equation*} 
		[\e]= \e_1 \cdots \e_n  = 1 +\sum \sigma_{j_1} \cdots\sigma_{j_k}  d_{i_{j_1}}  \lrbar{ \aa_{i_{j_{1}}}, \aa_{i_{j_{2}}} } \cdots \lrbar{ \aa_{i_{j_{k-1}}}, \aa_{i_{j_k}} } c_{i_{j_k}}.
	\end{equation*} Recall that $\aa_1 = \binom{1}{0}$. Also the equation (\ref{eqn:lbf}) gives
	\begin{equation*} 
		\rho_\aa(\lb) = I + \sum \sigma_{j_1} \sigma_{j_2}\cdots\sigma_{j_k}  \lrbar{ \aa_{i_{j_1}}, \aa_{i_{j_2}} }  \cdots \lrbar{ \aa_{i_{j_{k-1}}}, \aa_{i_{j_k}} } 
		\sm{c_{i_{j_1}} \\  d_{i_{j_1}}} 
		(-d_{i_{j_k}} \ \ c_{i_{j_k}}).
	\end{equation*}  
	Therefore, the (2,2)-entry of $\rho_\aa(\lb)$ is $[\e]$. 
	On the other hand, $\rho_\aa(\lb)$ commutes with $\rho_\aa(m_1) = \sm{1&1\\0&1}$ so it should be of the form  $\pm \sm{1 &*\\0&1}$. 
	Therefore, we have $\tr(\rho_\aa(\lb))=2 [\e]$ and thus
	$\ob=[\e]$ finally follows from Proposition \ref{prop:knot}.
\end{proof}
\begin{rmk}
	Theorem \ref{thm:obs} holds for any abelian parabolic representation but doesn't hold for the trivial representation $\rho_\circ$ of null vectors since  it has the obstruction class of $+1$ but an arbitrary choice of $\e$ is possible, i.e., we can put $[\e]=\e_1\cdots\e_N=-1$ for $\rho_\circ$.
\end{rmk}

\subsection{Parabolic representations}
Let us decompose the set of parabolic representations along the obstruction class,
\[\Rpar = \Rpar^+ \sqcup \Rpar^-.\]
The decomposition of $\QQ$ along sign-type assembles two groups according to the obstruction class as follows, 
$$\QQ= \bigsqcup_{\ee\in \{\pm1\}^N} \Qe ~=~ \QQ_+ \sqcup \QQ_-$$
where $ \QQ_+:= \bigsqcup\limits_{\e_1 \cdots \e_N=+1} \Qe$ and $\QQ_-:= \bigsqcup\limits_{\e_1 \cdots \e_N=-1} \Qe$.\\
All $\Rpar^+$, $\Rpar^-$, and $\Q_\ee$ are obviously affine varieties. 

By the surjectivity of $\M$ in Proposition \ref{prop:surjM}, 
it is also obvious that  
\begin{align*}
&\M_+:=\M|_{\QQ_+}:\QQ_+ \to \Rpar^+ ~~\text{and}~~ \\
&\M_-:=\M|_{\QQ_-}:\QQ_- \to \Rpar^-	
\end{align*}
are surjective.
These $\M_+$ and $\M_-$ are  $2^N$-to-$1$ maps as well as $\M$. We can restrict $\M_+$ (resp. $\M_-$) to a $2$-to-$1$ surjective map $\Me$ for any sign-type 
$\ee=(\e_1,\dots,\e_N)$ with  $\e_1\cdots\e_N=+1$ (resp. $-1$). We summarize this correspondence   as follows. 

\begin{thm}\label{thm:2-1map}
	Let $\rho : \Gk\rightarrow \sl$ be a  parabolic representation whose obstruction class is
	$\ob \in \{\pm1\}$.
	For  any sign-type 
$\ee=(\e_1,\dots,\e_N)$ satisfying  $\e_1\cdots\e_N=\ob $, 
there exists an 
 arc-coloring $\aa=(\aa_1,\dots,\aa_N)$ of the sign-type $\e$ such that the associated representation   $\rho_\aa$ is the given $\rho$. Moreover, all such arc-colorings are exactly  $\aa$ and $-\aa$.

 \begin{proof} 
	In Lemma \ref{lem:signtypelem}, we have a surjective $2$-to-$1$ map $\Me \to \Rpar$. By Theorem \ref{thm:obs}, the image of $\Me$ is contained in $\Rpar^\ob$ and 
the parabolic quandle map restricted to $\Qe$,   
$$\Me:=\M \big|_{\Qe} : \Qe \to \Rpar^\ob$$ which
is  surjective.
Since we have a given representation $\rho$ with the obstruction class $\ob$, $\Rpar^\ob$ cannot be empty and we can find a preimage $\aa$ such that $\Me(\aa)= \rho$. The last assertion is obvious since $\Me$ is  a $2$-to-$1$ map by Lemma \ref{lem:signtypelem}.
 \end{proof} 

\end{thm}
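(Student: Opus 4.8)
The plan is to combine the surjectivity and fiber-count of the global map $\M$ with the obstruction computation of Theorem \ref{thm:obs}, and then run a pigeonhole count that forces $\Me$ to be onto $\Rpar^{\ob}$ for \emph{every} admissible sign-type. The starting point is that $\rho$, being parabolic and hence non-trivial, lies in $\Rnt$, so Proposition \ref{prop:surjM} gives $|\M^{-1}(\rho)| = 2^N$. The whole argument is a bookkeeping of how these $2^N$ preimages are distributed among the sign-types.

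First I would record the decomposition of this fiber along sign-types. By Lemma \ref{lem:signtypelem}(1) we have $\QQ = \bigsqcup_{\ee} \Qe$, so $\M^{-1}(\rho) = \bigsqcup_{\ee} \Me^{-1}(\rho)$, and by Lemma \ref{lem:signtypelem}(2) each nonempty fiber $\Me^{-1}(\rho)$ consists of exactly the antipodal pair $\{+\aa, -\aa\}$. Writing $S(\rho)$ for the set of sign-types with $\Me^{-1}(\rho) \neq \varnothing$ and summing fiber cardinalities, we get $2^N = 2\,|S(\rho)|$, hence $|S(\rho)| = 2^{N-1}$ (this is the cardinality count already recorded before Proposition \ref{prop:totalsignconstant}).

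Next I would invoke Theorem \ref{thm:obs} to pin down \emph{which} sign-types can occur. Every arc-coloring $\aa$ with $\rho_\aa = \rho$ has total sign $[\ee] = \e_1\cdots\e_N = \ob$ by that theorem, so $S(\rho) \subseteq \{\ee \in \{\pm1\}^N : \e_1\cdots\e_N = \ob\}$. The set on the right has cardinality exactly $2^{N-1}$, since the product map $\{\pm1\}^N \to \{\pm1\}$ is a surjective group homomorphism whose kernel has index $2$, so both of its fibers have $2^{N-1}$ elements. Comparing cardinalities, the inclusion between two sets of the same size $2^{N-1}$ must be an equality: $S(\rho) = \{\ee : [\ee] = \ob\}$.

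The conclusion is then immediate. For any prescribed sign-type $\ee$ with $\e_1\cdots\e_N = \ob$ we have $\ee \in S(\rho)$, so $\Me^{-1}(\rho)$ is nonempty and supplies an arc-coloring $\aa$ of sign-type $\ee$ with $\rho_\aa = \rho$; and by Lemma \ref{lem:signtypelem}(2) the full fiber is $\{+\aa, -\aa\}$, which is the uniqueness-up-to-sign clause. The crux that makes the argument work — and the step most easily glossed over — is precisely the equality $S(\rho) = \{\ee : [\ee] = \ob\}$: existence of a coloring for a \emph{specific prescribed} sign-type cannot be extracted merely from the raw surjectivity of Proposition \ref{prop:surjM}, which only produces preimages in unspecified sign-types. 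One genuinely needs the coincidence $|S(\rho)| = |\{\ee : [\ee] = \ob\}| = 2^{N-1}$, and this in turn rests on the obstruction formula of Theorem \ref{thm:obs} to guarantee the inclusion before the count can upgrade it to surjectivity onto $\Rpar^{\ob}$.
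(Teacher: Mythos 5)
Your proposal is correct and follows essentially the same route as the paper: Proposition \ref{prop:surjM} for the fiber count $2^N$, Lemma \ref{lem:signtypelem} for the decomposition into antipodal pairs, and Theorem \ref{thm:obs} to constrain the admissible sign-types. The explicit pigeonhole step $|S(\rho)| = 2^{N-1} = |\{\ee : [\ee]=\ob\}|$ is exactly the (unstated) justification behind the paper's terse assertion that $\Me:\Qe\to\Rpar^{\ob}$ is surjective, so your write-up is, if anything, more complete than the paper's.
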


Therefore, only two pre-specified sign-type $\ee_+$ and $\ee_-$ for $\{+1,-1\}$ obstructions are just enough to consider the whole non-trivial representations.  We often denote the chosen two sign-type $\ee_+$ and $\ee_-$ by $\ee_\pm$, or simply $\ee$ if not confused, i.e., 
\begin{equation}\label{eqn:Qdecpm}
	\Me:= \M_{\ee_+} \sqcup \M_{\ee_-}: \QQ_{\ee_+} \sqcup~ \QQ_{\ee_-} \to \Rpar^+ \sqcup~ \Rpar^-.
\end{equation}
In short,
$\Me: \Qe \to \Rpar$ and it is exactly $2$-$1$ and surjective.

Now, let us consider $\Xpar=\bRpar$, our main concern.  
Let  $\Qe /_\sim$ be denoted by $\wQe$ where $ \aa \sim \aa'$ if and only if $B \aa =  \aa'$ for $B \in \sl$. Then we have a one-to-one  correspondence as follows.

\begin{thm}\label{thm:perfect1-1}
The induced map
$$ \wMe : \wQe \longrightarrow \Xpar$$
is well-defined and bijective.
\end{thm}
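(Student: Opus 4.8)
The plan is to establish the three properties---well-definedness, surjectivity, and injectivity---in turn, using the equivariance of $\M$ (Lemma \ref{lem:paraequi}) as the principal tool together with the fact, recorded as $\bRnab=\Xnab$ in Section \ref{sec:charactervariety}, that two non-abelian parabolic representations with the same character are conjugate. For \emph{well-definedness}, I would observe that if $\aa'=B\aa$ with $B\in\sl$, then equivariance gives $\rho_{\aa'}(\wg_i)=\M(B\aa_i)=B\,\M(\aa_i)\,B^{-1}=B\,\rho_\aa(\wg_i)\,B^{-1}$ for every $i$, so $\rho_{\aa'}=B\rho_\aa B^{-1}$ is conjugate to $\rho_\aa$ and represents the same point of $\Xpar$. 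Hence the assignment $[\aa]\mapsto[\rho_\aa]$ is constant on $\sim$-classes and descends to $\wQe$.

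\emph{Surjectivity} is then a direct consequence of Theorem \ref{thm:2-1map}. Given a class in $\Xpar$ with representative $\rho$ whose obstruction class is $\ob$, I choose the sign-type component whose total-sign equals $\ob$ (this matching is exactly Theorem \ref{thm:obs}) and, by Theorem \ref{thm:2-1map}, produce an arc-coloring $\aa\in\Qe$ with $\Me(\aa)=\rho$. Then $\wMe([\aa])=[\rho]$, so every point of $\Xpar$ is hit.

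The crux is \emph{injectivity}, and the step I expect to require the most care is the observation that left multiplication by a fixed $B\in\sl$ preserves the sign-type exactly. From $a\qr b=\M(b)^{-1}a$ and equivariance one computes $(Ba)\qr(Bb)=\M(Bb)^{-1}(Ba)=B\,\M(b)^{-1}\,a=B(a\qr b)$, and likewise $(Ba)\ql(Bb)=B(a\ql b)$. Therefore, if $\aa$ satisfies $\e_n\aa_j=\aa_i\qr\aa_k$ (or its $\ql$ analogue) at each crossing, then $B\aa$ satisfies the identical relations with the \emph{same} signs $\e_n$; that is, $B\aa$ lies in the same single sign-type as $\aa$. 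Now suppose $\wMe([\aa])=\wMe([\aa'])$. Since $\rho_\aa$ and $\rho_{\aa'}$ are non-abelian with equal characters, they are conjugate, say $\rho_{\aa'}=B\rho_\aa B^{-1}$, and equivariance rewrites this as $\Me(\aa')=\Me(B\aa)$. Because the obstruction class is a conjugation invariant (equal to the total-sign by Theorem \ref{thm:obs}) and left multiplication preserves each $\e_n$, the colorings $\aa$, $\aa'$, and $B\aa$ all belong to one single sign-type, on which $\Me$ is genuinely $2$-to-$1$ by Lemma \ref{lem:signtypelem}(2), with fiber $\{+B\aa,\,-B\aa\}$ over the representation in question. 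Hence $\aa'=\pm B\aa=(\pm B)\aa$, and since $\pm B\in\sl$ this gives $\aa'\sim\aa$, i.e.\ $[\aa]=[\aa']$, completing the argument.
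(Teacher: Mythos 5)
Your proof is correct and follows essentially the same route as the paper: well-definedness via the equivariance of $\M$ (Lemma \ref{lem:paraequi}), surjectivity via Theorem \ref{thm:2-1map}, and injectivity by identifying the fiber $\{+\aa,-\aa\}$ with a single left-multiplication class. The only difference is that you explicitly verify that left multiplication by $B\in\sl$ preserves the sign-type (so that $B\aa$ again lies in $\Qe$ and the $2$-to-$1$ count of Lemma \ref{lem:signtypelem} applies), a step the paper leaves implicit; this is a worthwhile clarification rather than a deviation.
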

\begin{proof}
By Lemma \ref{lem:paraequi}, 
$$\Me (B \aa ) = B\Me( \aa) B^{-1}$$ 
and hence  $\Me$ is  well-defined. By Theorem \ref{thm:2-1map},  there are two arc-colorings for a representation $\rho$, i.e.,  $$\{ \aa,-\aa\} = \Me^{-1}(\rho).$$ 
They are in the same class by the left multiplication, i.e., $-\aa= \sm{-1&0\\0&-1}\aa$ and it completes the proof.
\end{proof}
 By this correspondence, we can investigate $\Xpar$ through $\wQe$ with any convenient sign-type $\ee=\ee_+\sqcup\ee_-$.
We usually use $\e=(1,\dots,\pm1,\dots,1)$ with  one specified signed crossing for the obstruction $\ob=\ee_1\cdots\ee_N = \pm1$. We will denote such a sign-type of only $i$-th  crossing  simply by \emph{sign-type of $\e_i=\pm1$}.

Finally, we should be careful that this theorem holds only for knots and fails  for links.

\subsection{Normalization}\label{sec:normalization}
By choosing two arcs $\arc_i$ and $\arc_j$ we have an explicit set-theoretic bijection
, 
 $$\wQe \cong \Qu \sqcup \Qv $$
  in a similar way to \cite[Section 2]{riley_parabolic_1972}.
\begin{prop}\label{prop:normalizeQ}
Let us fix two arcs $\arc_i$ and $\arc_j$ in a knot diagram $D$. 
For a parabolic arc-coloring $\aa \in \wQe$ there is a unique element in 
$\QQ_u$ with nonzero $u\in\Cbb$  if  $\rho_\aa(\wg_i\wg_j)\neq\rho_\aa(\wg_j\wg_i)$ (resp. in $\QQ_v$  with nonzero $v\in\Cbb$ if otherwise), where  
\begin{align*}
	\Qu &=\Set{ (\aa_1,\dots,\aa_N)\in \Qe \given a_i=\sm{1\\0},a_j=\sm{0\\u} \text{ for } u\in \CCzero } , \\
	\Qv &=\Set{ (\aa_1,\dots,\aa_N)\in \Qe  \given a_i=\sm{1\\0},a_j=\sm{v\\0}  \text{ for } v\in \CCzero } / \sim, 
\end{align*}
where $\aa \sim \aa'$ if and only if $\sm{1 & *\\ 0&1}\aa=\aa'$.	
\end{prop}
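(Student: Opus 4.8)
The plan is to reduce the whole statement to elementary linear algebra on the two coloring vectors $\aa_i,\aa_j\in\Cbb^2$, exploiting that the equivalence on $\Qe$ is the left $\sl$-action and that $\M$ is equivariant under it (Lemma \ref{lem:paraequi}). First I would record the dichotomy underlying the two cases. Since $\rho_\aa$ is a genuine parabolic representation, every meridian is nontrivial, so each coloring vector is nonzero; and from $\M(a)=I+a\widehat a$ one sees $\M(a)a=a$, i.e. $\M(\aa)$ is a parabolic fixing the line $[\aa]$. Two nontrivial parabolics commute iff they share this fixed line, hence
$$\rho_\aa(\wg_i\wg_j)=\rho_\aa(\wg_j\wg_i)\iff \aa_i\parallel\aa_j\iff \lrbar{\aa_i,\aa_j}=0.$$
This is exactly the split between the $\Qu$-case ($\lrbar{\aa_i,\aa_j}\neq0$) and the $\Qv$-case ($\lrbar{\aa_i,\aa_j}=0$).

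Next I would note that left multiplication by any $B\in\sl$ sends $\Qe$ to itself: by Lemma \ref{lem:paraequi} the operations satisfy $(Ba)\qr(Bb)=B(a\qr b)$ and $(Ba)\ql(Bb)=B(a\ql b)$, so every signed relation $\e_n\aa_j=\aa_i\qr\aa_k$ (or $\ql$) is preserved with the same $\ee$. Hence normalizing a representative within its $\sl$-orbit keeps us inside the same $\Qe$, and the construction below is independent of the chosen representative.

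For the non-commuting case $d:=\lrbar{\aa_i,\aa_j}\neq0$, the vectors $\aa_i,\aa_j$ form a basis, so there is a unique linear map $B$ with $B\aa_i=\sm{1\\0}$ and $B\aa_j=\sm{0\\u}$. Comparing determinants in $B\cdot[\aa_i\ \aa_j]=\sm{1&0\\0&u}$ gives $\det(B)=u/d$, so $B\in\sl$ forces $u=d\neq0$; moreover $B$ is then uniquely determined. Since the joint stabilizer in $\sl$ of $\sm{1\\0}$ and $\sm{0\\u}$ is trivial (the stabilizer of $\sm{1\\0}$ is $\sm{1&*\\0&1}$, and fixing $\sm{0\\u}$ with $u\neq0$ kills the off-diagonal entry), the orbit meets $\Qu$ in exactly one point, giving the unique element with nonzero $u$.

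Finally, for the commuting case $d=0$ I would write $\aa_j=\lambda\aa_i$ with $\lambda\neq0$ (both vectors nonzero). Choosing any $B\in\sl$ with $B\aa_i=\sm{1\\0}$ yields $B\aa_j=\sm{v\\0}$ with $v=\lambda\neq0$. Here $B$ is only determined up to left multiplication by the stabilizer $\sm{1&*\\0&1}$ of $\sm{1\\0}$, which fixes $v$ (its second entry being $0$) while moving the remaining arcs; this is precisely the relation $\sim$ defining $\Qv$. Therefore the orbit determines a unique class in $\Qv$ with nonzero $v$. The main point to get right is the bookkeeping in this last case—checking that the residual unipotent action coincides exactly with the quotient defining $\Qv$ and that $v$ is a genuine invariant—together with the repeated use of the nontriviality of meridians to guarantee $u,v\neq0$.
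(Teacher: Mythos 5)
Your proof is correct and follows essentially the same route as the paper: normalize the pair $(\aa_i,\aa_j)$ by the left $\sl$-action and read off uniqueness from the isotropy subgroups $\{\id\}$ and $\sm{1&*\\0&1}$. The only difference is that you establish the normalization directly by linear algebra on the coloring vectors (with the commuting/non-commuting dichotomy detected by $\lrbar{\aa_i,\aa_j}$), whereas the paper cites Riley's Lemma~1 for the corresponding normalization of the two parabolic matrices.
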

\begin{proof}	
By \cite[Lemma 1]{riley_parabolic_1972}, two matrices in $\P$ can be normalized to $\binom{ 1~1}{0~1}$ and $\binom{1~0}{x~1}$ (resp.  $\binom{1~x}{0~1}$) for nonzero $x \in \Cbb$. By the correspondence between $\Q$ and $\P$ we obtain $\binom{1}{0}$ and $\binom{0}{u}$(resp. $\binom{v}{0}$) for a nonzero $u\in\Cbb$. If we consider left-multiplications by $\sl$ on the whole $\wQe$, then the isotropy subgroups are $\Set{\binom{1~0}{0~1} } $ and $\Set{ \binom{1~*}{0~1} }$ respectively and it proves the uniqueness.
\end{proof}
When we compute   $\Qv$ by solving quandle equations in $\QQ^*$, the solution contains abelian representations. So  we should remove such a component manually to obtain $\Qv$. 
\begin{rmk}
	Note that $\QQ_u$ is an algebraic variety by definition and we can see that $\QQ_v$  is also algebraic.
However, $\wQe$ and $\QQ_u \sqcup \QQ_v$ may not be algebraically isomorphic in general, since the non-commuting assumption for $\QQ_u$ is not Zariski closed condition.
\end{rmk}
%
Nevertheless, this normalization is not only very powerful in computing representations but also is essential in proving Theorem \ref{thm:Qcoef} later.

\section{Riley polynomial and $\u$-polynomial}
\subsection{Generalized Riley polynomial}

\subsubsection{Roots of Riley polynomial}
Let us see that, by direct computation from (\ref{eqn:twogenerator}), each root of Riley polynomial $\Riley(y)$ comes from  
$$ y= 2-\tr(\rho(x_1)\rho(x_2)),$$
where  $x_1 \asttt x_2 \in \Gk$ is the loop that winds the topmost crossing when we put the two-bridge knot $\K$ in  Conway normal form. 
%


Let us consider in general  
$$\yv_\gamma(\rho):=  2-\tr \comp \rho (\gamma)$$ for any $\gamma \in \Gk$. Note that $\yv_\gamma$ is  a regular function on $\Xpar$ and, in particular, it does not depend on the orientation of $\gamma$ as follows.
\begin{lem}\label{lem:ori} Let $\gamma^{-1}$ denote a loop $\gamma\in \Gk$ with its opposite orientation. Then we have $\yv_\gamma=\yv_{\gamma^{-1}}$.
\end{lem}
\begin{proof} From the Caley-Hamilton equation, we have $A -\tr(A) I + A^{-1}=0$ for any $A \in \sl$. Taking the trace, we have $\tr(A)=\tr(A^{-1})$. 
\end{proof}

In particular, let us consider $y_\gamma(\rho)$ when $\gamma$ is a crossing loop in Figure \ref{fig:crossingloop}.

\begin{figure}[H]
\centering
\scalebox{1}{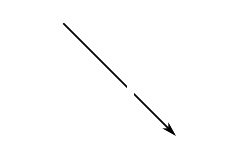}
\caption{Loops winding a crossing.}
\label{fig:crossingloop}
\end{figure}
Let $\arc_i$ and $\arc_j$ be arcs at a crossing as in Figure \ref{fig:crossingloop}. Let $\meri_i$ and $\meri_j$ be the Wirtinger generator corresponding to $\arc_i$ and $\arc_j$.

\begin{prop} \label{prop:crossing} Let $\alpha$ and $\beta \in \Gk$ be loops as in Figure \ref{fig:crossingloop}. Then we have 
\begin{enumerate}
	\item  $\yv_\alpha=\yv_{\meri_i\asttt\meri_j}=\yv_{\meri_j\asttt\meri_i}$.
	\item $\yv_\beta=\yv_{\meri_i\asttt\meri_j^{-1}}=\yv_{\meri_i^{-1}\asttt\meri_j} 
	=\yv_{\meri_j\asttt\meri_i^{-1}}=\yv_{\meri_j^{-1}\asttt\meri_i}$.
	\item 	$\yv_\beta =-\yv_\alpha$.
	\item $\rho(\meri_i)\rho(\meri_j)=\rho(\meri_j)\rho(\meri_i)$ if and only if $\yv_{\beta}(\rho)=\yv_{\alpha}(\rho)=0$. 
\end{enumerate}	
%
\end{prop}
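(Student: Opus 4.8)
The plan is to funnel all four assertions through the single scalar $\lrbar{a_i,a_j}$, where $a_i,a_j$ are the coloring vectors of the arcs $\arc_i,\arc_j$ and $\lrbar{\,,}=\det$ is the symplectic form of Section~\ref{sec:symquan}. First I would pin down the topological meaning of the loops. Reading Figure~\ref{fig:crossingloop}, $\alpha$ is freely homotopic in $S^3\setminus K$ to $\meri_i\asttt\meri_j$ and $\beta$ to $\meri_i\asttt\meri_j^{-1}$; since $\yv_\gamma=2-\tr\rho(\gamma)$ depends only on the conjugacy class of $\gamma$, these free-homotopy identifications already deliver $\yv_\alpha=\yv_{\meri_i\asttt\meri_j}$ and $\yv_\beta=\yv_{\meri_i\asttt\meri_j^{-1}}$. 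The remaining equalities in (1) and (2) are then pure trace bookkeeping: cyclicity $\tr(XY)=\tr(YX)$ gives $\yv_{\meri_i\asttt\meri_j}=\yv_{\meri_j\asttt\meri_i}$ and $\yv_{\meri_i\asttt\meri_j^{-1}}=\yv_{\meri_j^{-1}\asttt\meri_i}$, while Lemma~\ref{lem:ori} ($\yv_\gamma=\yv_{\gamma^{-1}}$) pairs each word with its inverse, accounting for all four expressions listed in (2).

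For (3) I would compute $\tr\rho(\meri_i\asttt\meri_j)$ and $\tr\rho(\meri_i\asttt\meri_j^{-1})$ directly from $\rho(\meri_i)=\M(a_i)=I+a_i\widehat{a_i}$ and $\rho(\meri_j)^{\pm1}=I\pm a_j\widehat{a_j}$, using the two scalar identities $\widehat{a_i}a_j=\lrbar{a_i,a_j}$ and $\tr(a\,\widehat{b})=\lrbar{b,a}$ together with $\tr(a\widehat a)=0$. Upon expanding the products the linear terms drop and the only surviving contribution is $\tr(a_i\widehat{a_i}a_j\widehat{a_j})=-\lrbar{a_i,a_j}^2$, so $\tr\rho(\meri_i\asttt\meri_j)=2-\lrbar{a_i,a_j}^2$ and $\tr\rho(\meri_i\asttt\meri_j^{-1})=2+\lrbar{a_i,a_j}^2$. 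Hence $\yv_\alpha=\lrbar{a_i,a_j}^2$ and $\yv_\beta=-\lrbar{a_i,a_j}^2$, which is (3); equivalently one may quote the $\mathrm{SL}_2$ identity $\tr(AB)+\tr(AB^{-1})=\tr A\,\tr B=4$, whence $\yv_\alpha+\yv_\beta=0$. This same computation records the fact, reused in Lemma~\ref{lem:lambdaalpha}, that each Riley root is the square $\lrbar{a_i,a_j}^2$.

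Finally, for (4) I would note that by (3) the conditions $\yv_\alpha=\yv_\beta=0$ are both equivalent to $\lrbar{a_i,a_j}=0$, i.e. to $a_i$ and $a_j$ being parallel, and it remains to match this with commutativity of $\M(a_i)$ and $\M(a_j)$. Since $\M(a)a=a$, the vector $a$ spans the fixed line of $\M(a)$, so two such parabolics share a fixed point exactly when $a_i\parallel a_j$; to make the nontrivial direction airtight I would instead compute the commutator $[\M(a_i),\M(a_j)]=\lrbar{a_i,a_j}\,(a_i\widehat{a_j}+a_j\widehat{a_i})$ and apply it to $a_i$, obtaining $-\lrbar{a_i,a_j}^2\,a_i$, which vanishes iff $\lrbar{a_i,a_j}=0$. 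Either way (4) follows.

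All of these steps are routine $2\times2$ trace algebra; the only points demanding genuine care are the geometric identification of $\alpha$ and $\beta$ with the words $\meri_i\asttt\meri_j^{\pm1}$ read off from the figure, and securing the \emph{only if} direction of the commutativity criterion in (4) rather than merely the easy \emph{if} direction, which the explicit commutator settles cleanly.
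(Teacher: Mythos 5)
Your proposal is correct and follows essentially the same route as the paper: (1) and (2) by identifying the loops with the words $\meri_i\asttt\meri_j^{\pm1}$ and trace bookkeeping, (3) via the $\sl$ identity $\tr(AB)+\tr(AB^{-1})=\tr A\,\tr B=4$ (which you also verify by the explicit expansion of $(I+a_i\widehat{a_i})(I\pm a_j\widehat{a_j})$), and (4) via the criterion that two trace-$2$ matrices commute iff $\tr(AB)=2$. The only difference is presentational: where the paper simply cites that commutativity criterion, you derive it by computing the commutator $\lrbar{a_i,a_j}(a_i\widehat{a_j}+a_j\widehat{a_i})$ explicitly, which makes the ``only if'' direction of (4) self-contained.
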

\begin{proof}  (1) and (2) are straightforward consequences by the Wirtinger generators. By the trace relation,  we have 
\begin{equation*}
	\begin{array}{rcl}
		\yv_\alpha &=& \yv_{m_1*m_2}=2 -  \tr \big({\rho}( m_1 \asttt m_2) \big)  \\[2pt]
		&=& 2 - \tr \big({\rho}( m_1) \big) \tr \big({\rho}( m_2) \big) + \tr \big({\rho}( m_1^{-1} \asttt m_2) \big)\\[2pt]
		&=& \tr({\rho}(m_1^{-1}\asttt m_2)) -2 \\[2pt]
		&=& -\yv_\beta.
	\end{array}
\end{equation*}	
On the other hand, any two $\sl$ matrices $A$ and $B$ of trace $2$ commute if and only if $\tr(AB)=2$. Therefore, $\rho$-images of Wirtinger generators around a crossing commute if and only if  $\yv_{\beta}(\rho)=\yv_{\alpha}(\rho)=0$.
\end{proof}
Note that the above Proposition holds regardless of over/under information of the crossing.

\begin{defn}\label{def:RileyL}
Let us define \emph{$\yv$-value} at a crossing $\cc$ as $\yv_\alpha(\rho)$, 
often denoted by $\yv_{\cc}$ or $\yv_{ij}$ where $\arc_i$ and $\arc_j$ are  
two arcs at the crossing $\cc$ in Figure \ref{fig:crossingloop}.  
Moreover, a \emph{(generalized)  Riley polynomial $\Riley_\cc(y)$ at a crossing $\cc$} is defined by 
\begin{align*}
	\Riley_\cc(y)&:= \prod_{\rho \in \Xpar}(y-\yv_{\cc}(\rho)),
\end{align*}
where the subindex $\cc$ is often omitted or replaced by $ij$ if the context is clear.
\end{defn}
Remark that we need the assumption that $\Xpar$ is zero-dimensional for the definition of generalized Riley polynomials. 

For a two-bridge knot, if we choose a crossing $\cc$ as the topmost crossing, the `generalized' Riley polynomial $\Riley_\cc(y)$ coincides with the original Riley's polynomial.  We usually omit the word `generalized' and simply say \emph{Riley polynomial}	
unless we need to emphasize it.



\begin{rmk}
When we  define a $\yv$-value $\yv_\cc$ at a crossing $\cc$, there are two choices of crossing loops : $\alpha$ and $\beta$  in Figure \ref{fig:crossingloop}.   
Maybe someone prefers to choose $\beta$ instead of $\alpha$
and it differs only in sign.
Remark that the negative $\yv$-value, i.e., $\yv_\beta$ choosing $\beta$ instead of $\alpha$ is exactly the same as \emph{crossing label} in \cite[Remark 5.11]{kim_octahedral_2018}. When $\rho$ is a holonomy representation of hyperbolic structure, this $y_\beta$ is also the same as a geometric quantity,  called \emph{intercusp parameter} in \cite{neumann_intercusp_2016}.
\end{rmk}


\begin{rmk} \label{rmk:constantcounterexample}
As we  reviewed in Section \ref{sec:originalRiley}, the original Riley polynomial is monic integral and has no multiple root, and the constant term is $\pm1$. These properties doesn't hold anymore. We can easily find an example of a generalized Riley polynomial with multiple roots and non-unit constant term.  Let us see  $7_4$ knot of Section \ref{sec:7_4}.    The Riley polynomial at the crossing $\cc_5$ is  $\Riley_{\cc_5}=(y^3+y^2+13y-4)(y^2+y+1)^2$. We can see the multiple roots and the constant term of  $-4$. 
Non-integral Riley polynomial seems to be much rarer and firstly appears in $9_{29}$ knot. We will prove that, unlike the other properties, the square-splitting property always hold.
\end{rmk}
%
%
%
%
%
%
%
%

\subsection{$\yv$-value and $\u$-value}\label{sec:uvalyval}

An important point is that  $\yv$-value $\yv_{ij}$ is  the square of the symplectic form  $\lrbar{ \aa_i,\aa_j}$ given by  determinant. 
\begin{lem}\label{lem:lambdaalpha} Consider an arc-coloring $\aa=(\aa_1,\dots,\aa_N)$ associated to $\rho$. Let $\aa_i$ and $\aa_j$ be  two quandle vectors and  $\wg_i$ and $\wg_j$ be the corresponding  Wirtinger generators respectively. Then we have 
$$\yv_{ij}= \lrbar{ \aa_i,\aa_j}^2.$$ 
\end{lem}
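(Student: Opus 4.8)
The plan is to reduce everything to a single trace computation using the hat-notation presentation of the parabolic quandle map. By Proposition \ref{prop:crossing}(1) together with Definition \ref{def:RileyL}, the $\yv$-value at the crossing is $\yv_{ij} = \yv_\alpha(\rho) = 2 - \tr\bigl(\rho(\meri_i \meri_j)\bigr)$. Since $\aa$ is the arc-coloring associated to $\rho$, we have $\rho(\meri_i) = \M(\aa_i)$ and $\rho(\meri_j) = \M(\aa_j)$, so the whole statement comes down to showing that $\tr\bigl(\M(\aa_i)\M(\aa_j)\bigr) = 2 - \lrbar{\aa_i,\aa_j}^2$.

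First I would invoke the formula $\M(a) = I + a\,\widehat{a}$ from (\ref{eqn:Mqbyhat}) and simply expand the product:
\[
\M(\aa_i)\M(\aa_j) = I + \aa_i\,\widehat{\aa_i} + \aa_j\,\widehat{\aa_j} + \aa_i\,\widehat{\aa_i}\,\aa_j\,\widehat{\aa_j}.
\]
The next step is to take the trace term by term, using two elementary facts from Section \ref{sec:symquan}: that $\widehat{a}\,b = \lrbar{a,b}$ is a scalar, and the cyclicity of the trace. The diagonal rank-one terms vanish because $\tr(\aa_i\,\widehat{\aa_i}) = \widehat{\aa_i}\,\aa_i = \lrbar{\aa_i,\aa_i} = 0$, and likewise for the $j$-term. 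For the cross term, I would first collapse the middle factor $\widehat{\aa_i}\,\aa_j = \lrbar{\aa_i,\aa_j}$ into a scalar, pull it out, and then compute $\tr(\aa_i\,\widehat{\aa_j}) = \widehat{\aa_j}\,\aa_i = \lrbar{\aa_j,\aa_i}$. By antisymmetry of $\lrbar{\ ,\ }$ this gives $\lrbar{\aa_i,\aa_j}\lrbar{\aa_j,\aa_i} = -\lrbar{\aa_i,\aa_j}^2$.

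Collecting the four contributions yields $\tr\bigl(\M(\aa_i)\M(\aa_j)\bigr) = 2 - \lrbar{\aa_i,\aa_j}^2$, and substituting back into $\yv_{ij} = 2 - \tr\bigl(\rho(\meri_i\meri_j)\bigr)$ gives exactly $\yv_{ij} = \lrbar{\aa_i,\aa_j}^2$. I do not expect any genuine obstacle here: the computation is entirely routine once the hat-notation identities are in place. The only point requiring care is the bookkeeping of the rank-one outer-product traces — making sure to use $\tr(v\,w^{\mathsf{T}}) = w^{\mathsf{T}}v$ consistently and to keep track of the antisymmetry sign — but this is mechanical rather than conceptual.
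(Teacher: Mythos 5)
Your proposal is correct and follows essentially the same route as the paper's proof: expand $\M(\aa_i)\M(\aa_j)=(I+\aa_i\widehat{\aa_i})(I+\aa_j\widehat{\aa_j})$, kill the rank-one diagonal terms via $\tr(\aa_k\widehat{\aa_k})=\lrbar{\aa_k,\aa_k}=0$, and evaluate the cross term as $\lrbar{\aa_i,\aa_j}\lrbar{\aa_j,\aa_i}=-\lrbar{\aa_i,\aa_j}^2$. No gaps.
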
 
\begin{proof} We have
\begin{equation*}
\begin{array}{rcl}
	\yv_{ij}(\rho) &=& 2- \textrm{tr}(\rho(\wg_i \asttt \wg_j))  \\
	&=& 2 - \textrm{tr}\big( I +\aa_i \widehat{\aa_i})(I + \aa_j \widehat{\aa_j} \big)  \\[2pt]
	&=& 2- \textrm{tr}\big(I+\aa_i \widehat{\aa_i}+\aa_j \widehat{\aa_j} +\aa_i \widehat{\aa_i}\aa_j \widehat{\aa_j} \big)  \\[2pt]
	&=& - \widehat{\aa_i} \aa_j \tr(\aa_i\widehat{\aa_j})=\lrbar{ \aa_i,\aa_j}^2. \qedhere
\end{array}
\end{equation*}	
\end{proof}
As recalling $\lrbar{\aa_i,\aa_j}=\det(\aa_i \aa_j)$, we will call the square root of $\yv$-value   \emph{$\u$-value}.\footnote{The name $\yv$-value or $\u$-value is taken from the variable symbols used in \cite[222p for $\u$, 227p for $\yv$]{riley_parabolic_1972}. The assignments here are essentially the same as those of R. Riley originally used, although `quandle' didn't appear explicitly.}
More precisely, we define $\u$-value as follows,
\begin{defn}\label{def:uvalue}
$\u_\ij:=\lrbar{a_i,a_j}.$
\end{defn} 
Note that,  
when we consider of $u$-value of a representation $\rho$,  the square root $\u_\ij$ of $\yv_\ij$ has two choices of $\pm\sqrt{\yv_\ij}$ and  the $\u$-value $\u_\ij=\lrbar{\aa_i,\aa_j}$ also changes according to the sign change of $\aa_i$ and $\aa_j$. 
If, however, one fixes a sign-type $\ee$ then $\lrbar{\aa_i,\aa_j}$ is well-defined without sign-ambiguity since the signs of $\aa_i$ and $\aa_j$ for $\rho$ are changed simultaneously as in Theorem \ref{thm:2-1map}. When a sign-type $\ee$ needs to be specified, we use the notation $\u^\ee$ or $\u_{\ij}^\ee$.

\begin{rmk}\label{rmk:uvalue}
For a given sign-type $\ee$, the $\u$-value  $\u^\ee_\ij$ has no sign-ambiguity. However, even though the sign-type $\ee$ is determined, $\u$-value  $\u_\cc$ at a crossing $\cc$ has sign-ambiguity because of the following Lemma \ref{lem:signambiguity}. 
To define $\u$-value  at a crossing, we have to decide which two arcs will be used in $\lrbar{a_i,a_j}$ among three arcs around the crossing. 
We will set the  assigning rule for \emph{ crossing $\u$-value} so that $\u^\ee_\cc=\u^\ee_{ij}$ where $a_i$ is the under-incoming arc and $a_j$ is the over-arc.  
\end{rmk}

\begin{lem}\label{lem:signambiguity}
Let three coloring vectors at a crossing $\cc$ be $a$, $a'$ and $b$ as in \\
\makebox[\textwidth]{
\begin{tikzpicture}[scale=0.6]
	\draw[,thick] (0,0) -- (1,1);
	\draw[line width=5pt,white]  (0,1)--(1,0);
	\draw[,thick]  (0,1)--(1,0) ;
	\node at (1,1) [xshift=1ex] {$a$};
	\node at (0,0) [xshift=-1ex] {$a'$};
	\node at (1,0)  [xshift=1ex] {$b$};
\end{tikzpicture} 
}
and $\e_\cc$ is the sign-type at the crossing $\cc$.
Then, 
\begin{equation}
\lrbar {a,b} =\e_\cc  \lrbar {a',b}
\end{equation}	
\end{lem}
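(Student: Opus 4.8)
The plan is to apply the sign-type quandle relation at the crossing and then pair both sides with the over-arc vector $b$ under the symplectic form $\lrbar{\,\cdot\,,\,\cdot\,}=\det$. Reading the diagram, $a'$ is the under-incoming arc, $a$ the under-outgoing arc, and $b$ the over-arc; this is exactly the positive-crossing case of Figure \ref{fig:QuandleRelationWithSignType} (with $a'=a_i$, $a=a_j$, and $b=a_k$). Thus the defining relation of an arc-coloring of sign-type $\e_\cc$ reads
\[
\e_\cc\, a \;=\; a' \qr b \;=\; a' + \lrbar{a',b}\, b,
\]
by the symplectic description \eqref{eqn:stracal} of $\qr$. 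Had the crossing instead been negative, the relation would read $\e_\cc a = a'\ql b = a' - \lrbar{a',b}\,b$; the computation below is insensitive to this change of sign.

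Next I would take $\lrbar{\,\cdot\,,b}$ of both sides. Using bilinearity of the form, this gives
\[
\lrbar{\e_\cc a, b} \;=\; \lrbar{a',b} + \lrbar{a',b}\,\lrbar{b,b}.
\]
The crucial point is that $\lrbar{b,b}=\det(b,b)=0$, which annihilates the cross term (and, in the negative-crossing case, the analogous sign-carrying term), so the right-hand side collapses to $\lrbar{a',b}$. Pulling the scalar $\e_\cc$ out of the first slot gives $\e_\cc\lrbar{a,b}=\lrbar{a',b}$, and multiplying through by $\e_\cc$ while using $\e_\cc^2=1$ yields the asserted identity $\lrbar{a,b}=\e_\cc\lrbar{a',b}$.

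There is no real obstacle here: the entire content is the vanishing $\lrbar{b,b}=0$ of the antisymmetric form on the repeated over-arc vector, combined with the sign-type relation of Definition \ref{def:signedrel}. The only step that needs a moment's care is the bookkeeping matching $a,a',b$ to the labels $a_j,a_i,a_k$ of Figure \ref{fig:QuandleRelationWithSignType}, together with checking that both admissible crossing signs (hence both $\qr$ and $\ql$) lead to the same conclusion — which they do precisely because the sign-dependent term is exactly the one killed by $\lrbar{b,b}=0$.
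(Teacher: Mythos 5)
Your proof is correct and is essentially the paper's own argument: both apply the sign-type quandle relation at the crossing, pair with the over-arc vector $b$, and use $\lrbar{b,b}=0$ to kill the sign-dependent term (the paper merely writes the relation in the inverse direction, expressing $a'$ in terms of $a$). No gaps.
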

\begin{proof}
$\lrbar{ a',b }  =\lrbar {\e_\cc(a \pm  (\widehat a  b) b), b} = \e_\cc\lrbar{ a,b} \pm    \widehat a  b \lrbar {b, b} 
= \e_\cc\lrbar {a,b}$.
\end{proof}

%
%

\subsection{$u$-polynomials}
Let us  define a \emph{$\u$-polynomial} $g(u)$ whose roots are $\u$-values, similar to Riley polynomial $\Riley(y)$ having $\yv$-values as roots. Unlike $y$-values, we need to fix sign-type $\ee$ to determine  $\u$-values without sign-ambiguity. 

\begin{lem}
For a given sign-type $\ee$,
a $\u$-value $\u_{ij}^\ee$ is well-defined in $\wQe$ without sign-ambiguity. 
\end{lem}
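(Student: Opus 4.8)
The plan is to show that the assignment $[\aa] \mapsto \lrbar{\aa_i,\aa_j}$ descends to a well-defined function on the quotient $\wQe = \Qe/{\sim}$. Since $\wQe$ is obtained from $\Qe$ by collapsing orbits of the left $\sl$-action, the only thing that really needs checking is that $\lrbar{\aa_i,\aa_j}$ is constant on each such orbit; the absence of sign-ambiguity will then follow because the global sign flip $\aa \mapsto -\aa$ is itself a special case of this action.

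First I would verify $\sl$-invariance directly. If $\aa' = B\aa$ for some $B \in \sl$, then $\aa'_i = B\aa_i$ and $\aa'_j = B\aa_j$, so writing $(\aa_i \mid \aa_j)$ for the matrix with the indicated columns we get
\begin{equation*}
\lrbar{\aa'_i,\aa'_j} = \det\big(B\aa_i \mid B\aa_j\big) = \det(B)\,\det\big(\aa_i \mid \aa_j\big) = \lrbar{\aa_i,\aa_j},
\end{equation*}
using $\det(B)=1$. Hence $\lrbar{\aa_i,\aa_j}$ is constant on $\sl$-orbits and therefore descends to $\wQe$.

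It then remains to explain why no sign-ambiguity survives. The a priori danger is that replacing $\aa_i$ by $-\aa_i$ alone negates $\lrbar{\aa_i,\aa_j}$. However, as in the proof of Proposition \ref{prop:totalsignconstant}, flipping the sign of a single coloring vector alters the sign-type $\ee$ at precisely the two crossings adjacent to that arc, so such a flip does not preserve $\Qe$. Within a fixed sign-type the only admissible sign change is the global one $\aa \mapsto -\aa = (-I)\aa$, which flips $\aa_i$ and $\aa_j$ simultaneously and leaves $\lrbar{\aa_i,\aa_j}$ unchanged; moreover $-I \in \sl$, so this case is already covered by the computation above. By Theorem \ref{thm:2-1map}, the arc-colorings of sign-type $\ee$ lying over a given $\rho$ are exactly $\aa$ and $-\aa$, and these coincide in $\wQe$, so $\u_{ij}^\ee$ is unambiguously defined.

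The computation itself is routine; the only place where any care is needed — and hence the conceptual crux of the statement — is recognizing that fixing the sign-type is exactly what forbids the independent single-vector sign flips, leaving only the global sign and the $\sl$-action, both of which preserve the determinant.
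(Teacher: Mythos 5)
Your proof is correct and follows essentially the same route as the paper, whose entire argument is the one-line computation $\lrbar{Ba_i,Ba_j}=\det B\,\det(a_i\ a_j)=\lrbar{a_i,a_j}$. Your additional discussion of why fixing the sign-type rules out independent single-vector sign flips (leaving only the global flip $-I\in\sl$) is accurate and consistent with Proposition \ref{prop:totalsignconstant} and Theorem \ref{thm:2-1map}, but it is elaboration on the same underlying idea rather than a different approach.
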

\begin{proof}
For $B\in \sl$, $\lrbar{ B a_i , B a_j} =  
\det B \det ( a_i ~ a_j )  = \lrbar{ a_i, a_j}$.
\end{proof}

\begin{defn}\label{def:acpoly}
Let us define a  \emph{$\u$-polynomial} $g(u)$ at a pair of two arcs $\arc_i$ and $\arc_j$   by
\begin{equation}
g(u):=g^{\ee}_{ij}(u):= \prod_{ \aa \in \Xpar } (u- \u_\ij^\ee ),
\end{equation}
where  $\aa=(a_1,\dots,a_N)$ is an arc-coloring with sign-type $\ee$.
If 
$\arc_i$ and $\arc_j$ are  two arcs at a crossing $\cc$ in Remark \ref{rmk:uvalue},
the subindex  $ij$ of 	$g^{\ee}_{ij}$ is often   replaced by $\cc$. These subscript $ij$, $\cc$, or $\ee$ is also often omitted unless confusing.
\end{defn} 

If $g(u)$ is not contained in $\Qbb[u]$, this definition would not be meaningful. We prove $g(u) \in \Qbb[u]$ as follows.

\begin{thm}\label{thm:Qcoef}
For any $i,j$,	the $u$-polynomial $g_{ij}(u)$ is contained in  $\Qbb[u]$.
\end{thm}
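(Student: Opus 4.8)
The plan is to show that the multiset of $u$-values $\{\,u_{ij}^{\ee}(\rho) : \rho \in \Xpar\,\}$ is stable under the absolute Galois group $\mathrm{Gal}(\overline{\Qbb}/\Qbb)$; then the coefficients of $g_{ij}(u) = \prod_{\rho}(u - u_{ij}^{\ee}(\rho))$, being the elementary symmetric functions of the roots, are Galois-fixed and hence lie in $\Qbb$. The entire subtlety is that $u_{ij} = \sqrt{y_{ij}}$ involves a choice of sign of a square root for each representation, and a Galois automorphism could a priori re-sign the roots independently rather than merely permute them. The point of the proof is that the \emph{fixed} sign-type $\ee$, together with the normalization of Section \ref{sec:normalization}, forces a coherent sign that Galois must respect.

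First I would work in the normalized model over $\Qbb$. Fixing the arcs $\arc_i,\arc_j$, consider $\QQ_u$ from Proposition \ref{prop:normalizeQ}, consisting of sign-type-$\ee$ arc-colorings with $a_i=\sm{1\\0}$ and $a_j=\sm{0\\u}$, $u\neq 0$. All defining data are rational: the quandle relations of Definition \ref{def:signedrel} with signs $\e_n\in\{\pm1\}$, and the linear normalization conditions. Hence $\QQ_u$ is a variety over $\Qbb$ and the entry $u$ is a $\Qbb$-coordinate on it. By Lemma \ref{lem:lambdaalpha} and Definition \ref{def:uvalue} this coordinate equals $\lrbar{a_i,a_j}$, and since $u_{ij}^{\ee}$ is invariant under left multiplication by $\sl$, every non-commuting $\rho$ has a \emph{unique} normalized representative $p_\rho\in\QQ_u$ with $u_{ij}^{\ee}(\rho)=u(p_\rho)$. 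The remaining representations, those with $\rho(\wg_i)$ and $\rho(\wg_j)$ commuting, correspond to $\QQ_v$, where $\lrbar{a_i,a_j}=0$; they contribute only the trivially rational factor $u^{|\QQ_v|}$ to $g_{ij}(u)$.

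Next comes the Galois-equivariance, which is the heart of the matter. Because $\wQe\cong\Xpar$ is defined over $\Qbb$ by Theorem \ref{thm:perfect1-1} (valid for knots), $\mathrm{Gal}(\overline{\Qbb}/\Qbb)$ permutes its $\overline{\Qbb}$-points. For $\sigma\in\mathrm{Gal}(\overline{\Qbb}/\Qbb)$ and a non-commuting $\rho$, the point $\sigma(p_\rho)$ solves the same $\Qbb$-rational normalized system (since $\M$ is defined over $\Qbb$ and $\sigma$ fixes $\sm{1\\0}$), represents $\sigma\cdot\rho$, and is again in normalized form; the uniqueness in Proposition \ref{prop:normalizeQ} then gives $\sigma(p_\rho)=p_{\sigma\cdot\rho}$, whence $u_{ij}^{\ee}(\sigma\cdot\rho)=\sigma\bigl(u_{ij}^{\ee}(\rho)\bigr)$. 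For commuting $\rho$ both sides vanish, and commutativity is itself a $\Qbb$-condition. Therefore $\sigma$ permutes the multiset $\{u_{ij}^{\ee}(\rho)\}_{\rho\in\Xpar}$, so $g_{ij}(u)$ is fixed by every $\sigma$ and lies in $\Qbb[u]$.

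The main obstacle I anticipate is not the Galois averaging but justifying the sign-coherence cleanly: one must verify that the fixed sign-type yields a genuinely single-valued, $\Qbb$-regular $u$-function whose values Galois can only permute, never independently re-sign. This is exactly where the uniqueness of the normalized representative (Proposition \ref{prop:normalizeQ}) and the knot-only bijection $\wMe$ (Theorem \ref{thm:perfect1-1}) are indispensable; for links $\wMe$ is not a bijection, the sign of the square root cannot be pinned down coherently, and indeed $g(u)\notin\Qbb[u]$ in the Whitehead link example of Section \ref{sec:whiteheadlink}.
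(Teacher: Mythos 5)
Your proposal is correct and follows essentially the same route as the paper: both decompose via the normalization $\wQe \cong \QQ_u \sqcup \QQ_v$ of Proposition \ref{prop:normalizeQ}, observe that $\QQ_v$ contributes only a power of $u$, note that $\QQ_u$ is the finite solution set of a system defined over $\Qbb$ so that $\mathrm{Gal}(\overline{\Qbb}/\Qbb)$ permutes it, and conclude that the coefficients of $g_{ij}(u)$, being symmetric functions of the $u$-values, are rational. Your write-up merely makes explicit the sign-coherence point (that the fixed sign-type and the uniqueness of the normalized representative prevent Galois from re-signing roots independently) which the paper's terser proof leaves implicit.
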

\begin{proof}
Let $g(u)=g_{ij}(u)$ and look at the decomposition of $\wQe$ into $\Qu$ and $\Qv$ by Proposition \ref{prop:normalizeQ}. We put $a_i=\sm{1\\0}$, $a_j=\sm{0\\u}$ for $\QQ_u$ (resp. $a_j=\binom{v}{0}$ for $\QQ_v$) and the other colorings are given indeterminate variables $t_1,t_2,\dots,t_{2(N-2)}$. 
By the assumption that $\Xpar$ is zero-dimensional,  $\Qu$ and $\Qv$ are finite sets.
Note that each arc-paring $u_{ij}$ in $\Qv$ becomes zero  and hence $g(u)=u^{d'}g'(u)$ with  $d':=\abs{\Qv}$. 
Each entry $u_\ij, t_1, t_2,\dots$ of in an element of $\Qu$ is a root of the system of integer coefficient polynomial equations given by quandle relation. They should be  algebraic numbers  since $\Qu$ is finite. Note that  $Gal(\overline \Qbb /\Qbb)$ acts on $\Qu$ by a permutation. 
As considering 
\[ g_\ij(u) = \prod (u-u_{ij})=u^n+C_{n-1}u^{n-1}+\dots+C_1u+C_0\]
and each coefficient $C_k$ is a symmetric function of $u_{ij}$'s and doesn't change under Galois conjugate. We conclude that $g_\ij\in\Qbb[u]$ since every coefficient $C_k$ is invariant under $Gal(\overline \Qbb /\Qbb)$.
%
%
\end{proof}

Let us summarize that by the correspondence of Theorem \ref{thm:perfect1-1} we have 
\[ \{ u \in \Cbb \mid  g^\ee_\cc(u)=0 \}~~  \overset{1:1} \longleftrightarrow ~~
\{ y \in \Cbb \mid  \Riley_\cc(y)=0 \}~~  \overset{1:1} \longleftrightarrow ~~
\Xpar. \addtag\label{eqn:1-1nab}\]
As we mention it in Section \ref{sec:charactervariety}, the higher dimensional part of $\Xpar$ is simply ignored in this paper.  
\begin{rmk}\label{rmk:upolyQcoeff}
Theorem \ref{thm:Qcoef} and Definition \ref{def:acpoly} works for only knots since  the bijection in Theorem 
\ref{thm:perfect1-1} between $\wQe$ and $\Xpar$ fails  for links.  If $\arc_i$ and $\arc_j$ belong to different components of the link diagram, both $\pm \u_\ij^\ee$ always are contained in $\wQe$, 
even though the sign-type $\ee$ is fixed.
\end{rmk}

\subsection{$u$-polynomial and sign-types}

When the $\ij$ index of $g_{ij}^{\ee}(u)$ doesn't need to be specified, We will simply denote  it as $g_\ee(u)$.
By the decomposition along the obstruction class  in (\ref{eqn:Qdecpm}), we obtain
$$
g_{\ee_\pm}(u)=g_{\ee_+}(u) g_{\ee_-}(u).
$$
Remark that any abelian representation has  positive obstruction.

\begin{nota}
For a given polynomial $g(u)$, let    
$g^*(u):= (-1)^{\deg(g)}g(-u)$. Then 
the roots of $g^*(u)$ are   $\{-u_1,\dots,-u_n\}$ if and only if the roots of $g(u)$ are     $\{u_1,\dots,u_n\}$. The sign correction is for $g^*(u)$ to be monic. 
This $*$-notation  will be  often used in this paper.
\end{nota}

Now we consider the change of $ g_\ee(u)$ for different sign-types.
\begin{thm}\label{thm:signchaging}
Let $\ee=(\e_1,\dots,\ee_N)$ and $\ee'=(\e'_1,\dots,\e'_N$) be two sign-types with the same total-sign, i.e., $\e_1\cdots\e_N= \e'_1\cdots\e'_N$. Then 
$$
g_{ij}^{\ee'}(u) = 
\begin{dcases*}
g_{ij}^{\ee}(u) & for $\e_i \,  \e_{i+1}\dots \e_{j-1} = +\e'_i \,\e'_{i+1}\dots \e'_{j-1}$\\
(g_{ij}^{\ee})^*(u) & for $\e_i \,\e_{i+1}\dots \e_{j-1}=-\e'_i \,\e'_{i+1}\dots \e'_{j-1}$.
\end{dcases*}
$$
\end{thm}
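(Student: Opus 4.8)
The plan is to transport arc-colorings of sign-type $\ee$ to those of sign-type $\ee'$ by the explicit bijection of Proposition \ref{prop:Qebijection} and to track exactly how the $\u$-value $\lrbar{a_i,a_j}$ is rescaled. Since the hypothesis $\e_1\cdots\e_N=\e'_1\cdots\e'_N$ says the two sign-types share the same total-sign, Proposition \ref{prop:Qebijection} indeed furnishes a bijection $\Phi:\Qe\to\QQ_{\ee'}$; fixing $\tau=+1$ it sends $\aa=(a_1,\dots,a_N)$ to $\aa'=(a'_1,\dots,a'_N)$ with $a'_k=c_k a_k$, where I set
$$c_k:=\prod_{m=1}^{k-1}\e'_m/\e_m\in\{\pm1\}.$$

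First I would record that $\Phi$ respects the identification of both $\wQe$ and $\widetilde{\QQ}_{\ee'}$ with $\Xpar$. Because each $c_k$ is $\pm1$, the expression $\M(a'_k)=\M(c_k a_k)=I+c_k^2\,a_k\widehat{a_k}=\M(a_k)$ shows $\Phi$ preserves the associated representation, so $\rho_{\aa'}=\rho_{\aa}$. Moreover $\Phi$ multiplies each coordinate by a scalar and therefore commutes with left multiplication by any $B\in\sl$, so it descends to the conjugacy quotients; combined with Theorem \ref{thm:perfect1-1} this means $\Phi$ pairs, for each class in $\Xpar$, its $\ee$-coloring with its $\ee'$-coloring. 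Consequently it suffices to compare $\u$-values coloring-by-coloring under $\Phi$.

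Next I would run the computation with the symplectic form. By bilinearity,
$$\u^{\ee'}_{ij}(\aa')=\lrbar{a'_i,a'_j}=c_i c_j\,\lrbar{a_i,a_j}=c_i c_j\,\u^{\ee}_{ij}(\aa),$$
and since $c_i^2=1$ the factors indexed by $m<i$ cancel in $c_i c_j$, leaving the single global sign
$$c_i c_j=\prod_{k=i}^{j-1}\e'_k/\e_k=\frac{\e'_i\e'_{i+1}\cdots\e'_{j-1}}{\e_i\e_{i+1}\cdots\e_{j-1}}$$
(taking $i<j$ so the product runs over the stated index range). Thus under the bijection above \emph{every} $\u$-value is multiplied by one and the same sign, uniformly over all of $\Xpar$.

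Finally I would read off the two cases from Definition \ref{def:acpoly}. If $\e_i\cdots\e_{j-1}=+\e'_i\cdots\e'_{j-1}$ then $c_ic_j=+1$, the multiset of roots of $g^{\ee'}_{ij}$ equals that of $g^{\ee}_{ij}$, and the two monic polynomials coincide. If $\e_i\cdots\e_{j-1}=-\e'_i\cdots\e'_{j-1}$ then $c_ic_j=-1$, every root is negated, and the defining identity $(g)^*(u)=(-1)^{\deg g}g(-u)$ turns negation of roots into exactly $g^{\ee'}_{ij}=(g^{\ee}_{ij})^*$. I do not expect a genuine obstacle: the computation is pure bilinearity and telescoping bookkeeping, and the only delicate point is the compatibility of $\Phi$ with the passage to $\Xpar$ — precisely the observation $\M(c_k a_k)=\M(a_k)$ valid because each $c_k=\pm1$ — which is what guarantees that comparing roots coloring-by-coloring is legitimate.
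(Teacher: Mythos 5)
Your proposal is correct and follows essentially the same route as the paper: it invokes the explicit bijection of Proposition \ref{prop:Qebijection}, computes $\lrbar{a'_i,a'_j}$ by bilinearity of the symplectic form, and telescopes the sign factors to $\e'_i\cdots\e'_{j-1}/(\e_i\cdots\e_{j-1})$ before reading off the two cases. The extra verification that $\Phi$ preserves the associated representation and descends to $\Xpar$ is a reasonable added detail that the paper leaves implicit.
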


\begin{proof}
	
By the  explicit bijection between 
$\Phi: \Qe \to \QQ_{\ee'}$ in Proposition \ref{prop:Qebijection},
 $a'_i  =\tau \frac{\ee'_1 \cdots \ee'_{i-1} }{\ee_1 \cdots \ee_{i-1}}a_i$. So we have
\begin{align*}
\lrbar{ a'_i,a'_j } &= \lrbar{ \tau \tfrac{\ee'_1 \cdots \ee'_{i-1} }{\ee_1 \cdots \ee_{i-1}} a_i,\tau \tfrac{\ee'_1 \cdots \ee'_{j-1} }{\ee_1 \cdots \ee_{j-1}} a_j } \\
&=   \tfrac{\ee'_i \cdots \ee'_{j-1} }{\ee_i \cdots \ee_{j-1}} \lrbar{ a_i,  a_j }.  \qedhere
\end{align*}
\end{proof}

Now we can conclude that there are  four choices of $\u$-polynomial  $g_{\ee}(u)=g_{\ee_+}(u)g_{\ee_-}(u)$ along the choice of the sign-type $\ee_+$ and $\ee_-$ along each obstruction class,
\[ g_{\ee_+}(u) g_{\ee_-}(u),~\; g_{\ee_+}^*(u) g_{\ee_-}(u),~\; g_{\ee_+}(u) g_{\ee_-}^*(u),~\; g_{\ee_+}^*(u) g_{\ee_-}^*(u),\]
since each  $g_{\ee_+}(u)$ and $g_{\ee_-}(u)$ has two choices  of $g_{\ee_\pm}(u)$ and $g_{\ee_\pm}^*(u)$ along the choices   of $\ee_+$ and $\ee_-$ respectively.

Let us consider a canonical choice of sign-type.
For positive total-sign, we have the most natural choice, $\ee_+=(+,\dots,+)$ and hence, denote it simply by $g_+(u)$. But for negative total-sign, there is no canonical choice of sign-type. So $g^-(u)$ is not determined whether it is $g_-(u)$ or $ g_-^*(u)$, unless the sign-type $\ee_-$ is specified. 
However, if we look at a specific crossing, we can think of the following rule to choose a negative sign-type.
\begin{rmk}[Canonical sign-type]\label{rmk:canonicalsigntype}

Recall Notation \ref{nota:crossingindex} and Remark \ref{rmk:uvalue}. In the convention, the index $i$ of incoming arc $\arc_i$ at a crossing $\cc$ is the same as the index of the crossing itself, i.e.,  $\cc=\cc_i$.
For a crossing $\cc$,
we have a canonical choice of sign-type $\ee_\cc$  as follows, 
\begin{align*}
&\ee_\cc^+=(+1,...+1)  &&\text{ and } \\
&\ee_\cc^-=(+1,\dots,-1,\dots,+1) &&\text{ where $-1$ sign only occurs in $\cc_i$.}
\end{align*} 
Therefore, with the definition of crossing $u$-value in Remark  \ref{rmk:uvalue}, we can  define  \emph{canonical $\u$-polynomial} at a crossing $\cc$ without a specified sign-type. As seeing the computation of $8_{18}$ in Section \ref{sec:8_18}, the canonical $u$-polynomials have more sensitive properties than Riley polynomials concerning the symmetry of knot, and hence they seem to deserve   further study. 

\end{rmk}
We also often omit the symbol $\ee$ if there is no need to specify a sign-type.

\subsubsection{Square-splitting}
The following proposition is obvious. 
\begin{prop}\label{prop:Rileyg(u)}
For Riley polynomial $\Riley_\ij(y)$ and $\u$-polynomial $g_\ij(u)$ at a pair of $\arc_i$ and $\arc_j$ with any sign-type $\ee$,
$$\Riley_\ij(u^2)=g_\ij(u)g_\ij^*(u).$$
In particular, $f(u) \mid g_\ij(u)$ implies $f(u)f^*(u) \mid \Riley_\ij(u^2)$.
\end{prop}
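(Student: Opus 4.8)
The plan is to expand both $\Riley_\ij(u^2)$ and $g_\ij(u)g_\ij^*(u)$ as products indexed by the same finite set $\Xpar$ and to match them factor by factor. The two bridges between the sides are Lemma \ref{lem:lambdaalpha}, which identifies each $\yv$-value as the square of the corresponding $\u$-value, and the bijection of Theorem \ref{thm:perfect1-1}, which lets us index the arc-colorings of a fixed sign-type $\ee$ by the elements of $\Xpar$, so that the product defining $g_\ij$ and the product defining $\Riley_\ij$ run over the very same representations.

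First I would substitute $y=u^2$ directly into the defining product $\Riley_\ij(y)=\prod_{\rho\in\Xpar}(y-\yv_\ij(\rho))$ and invoke Lemma \ref{lem:lambdaalpha} to write $\yv_\ij(\rho)=(\u_\ij^\ee(\rho))^2$ for the arc-coloring $\aa$ associated to $\rho$ under the fixed sign-type $\ee$. Each quadratic factor then splits as $u^2-(\u_\ij^\ee)^2=(u-\u_\ij^\ee)(u+\u_\ij^\ee)$, so $\Riley_\ij(u^2)$ becomes a product of these linear pairs over $\Xpar$.

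Next I would recognize the two halves of this product. By Definition \ref{def:acpoly} the factor $\prod_{\rho}(u-\u_\ij^\ee)$ is exactly $g_\ij(u)$. For the complementary factor I would use the $*$-notation: setting $n:=\abs{\Xpar}=\deg g_\ij$, a one-line degree count gives $g_\ij^*(u)=(-1)^n g_\ij(-u)=(-1)^n\prod_{\rho}(-u-\u_\ij^\ee)=\prod_{\rho}(u+\u_\ij^\ee)$, the two sign factors $(-1)^n$ cancelling. Combining the two identifications yields $g_\ij(u)g_\ij^*(u)=\prod_{\rho}(u-\u_\ij^\ee)(u+\u_\ij^\ee)=\Riley_\ij(u^2)$, which is the claimed equality.

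For the ``in particular'' clause, I would write $g_\ij(u)=f(u)h(u)$ and observe that the $*$-operation is multiplicative, i.e. $(fh)^*=f^*h^*$, which follows immediately from $\deg(fh)=\deg f+\deg h$ and the definition $p^*(u)=(-1)^{\deg p}p(-u)$. Then $\Riley_\ij(u^2)=g_\ij(u)g_\ij^*(u)=f(u)f^*(u)\,h(u)h^*(u)$, whence $f(u)f^*(u)\mid\Riley_\ij(u^2)$. I do not expect a genuine obstacle here: the proposition is essentially a direct corollary of Lemma \ref{lem:lambdaalpha} and Theorem \ref{thm:perfect1-1}, and the only point demanding care is the bookkeeping of signs and degrees in the $*$-notation together with the observation that, once a single sign-type $\ee$ is fixed, the $\u$-values carry no sign ambiguity, so the two products are genuinely indexed by the same $\Xpar$.
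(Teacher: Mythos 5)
Your proposal is correct and follows essentially the same route as the paper: both arguments use the bijection of (\ref{eqn:1-1nab}) to index the two defining products by the same set $\Xpar$, invoke Lemma \ref{lem:lambdaalpha} to write $\yv_\ij=\u_\ij^2$, and match factors via the difference-of-squares splitting, with the $(-1)^{\deg g}$ sign absorbed into the $*$-notation. Your explicit verification that $(fh)^*=f^*h^*$ for the ``in particular'' clause is a slightly more careful version of what the paper dismisses as obvious, but it is the same argument.
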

\begin{proof}
Recall the definition of $\Riley_\ij(y)$ and $g_\ij(u)$ and consider the bijection between the solution sets by (\ref{eqn:1-1nab}).
\begin{align*}
g_\ij(u)g_\ij(-u) &=\prod_{ \rho \in \Xpar }  (u-  u_{_\ij}) (-u -u_{_\ij}) 
=	\prod_{ \rho \in \Xpar }  (-u^2 + u_{_\ij}^2)\\
&= (-1)^{\deg g_\ij} \prod_{ \rho \in \Xpar } (u^2 - \yv_{_\ij})=(-1)^{\deg g_\ij}\Riley_\ij(u^2). 
\end{align*}  
The second assertion is obvious from the above statement.
\end{proof}
Proposition \ref{prop:Rileyg(u)} comes  simply from $\yv_\ij=\u_\ij^2$ and it alone doesn't imply $g(\u)\in \Qbb[u]$. However, by combining with  Theorem \ref{thm:Qcoef}, we  obtain a decomposition theorem immediately as follows.
\begin{thm}\label{thm:R=gg}
A generalized Riley polynomial $\Riley_\ij(y)$  has rational coefficients and is square-splitting, i.e., 
\[ \Riley_\ij(u^2)=g_\ij(u)g_\ij^*(u) ~\text{ for }~ g_\ij(u)\in\Qbb[u].\]
\end{thm}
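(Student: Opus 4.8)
The plan is to read Theorem \ref{thm:R=gg} as the synthesis of the two preceding results, Proposition \ref{prop:Rileyg(u)} and Theorem \ref{thm:Qcoef}, so that essentially no new argument is required. Proposition \ref{prop:Rileyg(u)} already supplies the square-splitting factorization $\Riley_\ij(u^2)=g_\ij(u)g_\ij^*(u)$ on purely formal grounds, via the identity $\yv_\ij=\u_\ij^2$ of Lemma \ref{lem:lambdaalpha} together with the bijection (\ref{eqn:1-1nab}); and Theorem \ref{thm:Qcoef} already establishes that each $g_\ij(u)$ has coefficients in $\Qbb$. What remains is only to combine these, to verify that rationality propagates through the factorization, and to deduce that $\Riley_\ij$ itself is rational.

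Concretely, I would first fix a single sign-type $\ee$; by the discussion following Definition \ref{def:uvalue} this renders each $\u$-value $\u_\ij^\ee$ unambiguous on $\wQe$, so that $g_\ij(u)=g_\ij^\ee(u)$ is a well-defined polynomial. Applying Proposition \ref{prop:Rileyg(u)} gives $\Riley_\ij(u^2)=g_\ij(u)g_\ij^*(u)$. I would then invoke Theorem \ref{thm:Qcoef} to place $g_\ij(u)\in\Qbb[u]$, and note that the $*$-operation preserves rational coefficients since $g_\ij^*(u)=(-1)^{\deg g_\ij}g_\ij(-u)$. Consequently the right-hand side, hence $\Riley_\ij(u^2)$ viewed as a polynomial in $u$, has all coefficients in $\Qbb$. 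Since $\Riley_\ij(u^2)$ is by construction a polynomial in $u^2$, its nonzero $u$-coefficients occur only in even degrees and equal the coefficients of $\Riley_\ij(y)$ under $u^2\mapsto y$; therefore $\Riley_\ij(y)\in\Qbb[y]$, and both assertions of the theorem follow.

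The genuine content does not lie in this step but was already absorbed into Theorem \ref{thm:Qcoef}: the key fact that, because $\Xpar$ is zero-dimensional, the normalized coloring set $\Qu$ is finite and is permuted by $Gal(\overline\Qbb/\Qbb)$, so that the elementary symmetric functions of the $\u$-values are Galois-invariant and hence rational. Granting that, the present theorem is mostly bookkeeping, and the two points deserving a moment's care are the elementary observation that a polynomial in $u^2$ with all $u$-coefficients in $\Qbb$ descends to an element of $\Qbb[y]$, and the fact that the whole argument is confined to knots: by Remark \ref{rmk:upolyQcoeff} the bijection between $\wQe$ and $\Xpar$ underlying Theorem \ref{thm:Qcoef} --- and with it the rationality of $g_\ij$ --- can fail for links.
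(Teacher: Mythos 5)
Your proposal is correct and follows exactly the paper's route: the paper also derives Theorem \ref{thm:R=gg} ``immediately'' by combining Proposition \ref{prop:Rileyg(u)} with Theorem \ref{thm:Qcoef}, leaving the descent from $\Qbb[u]$-coefficients of $\Riley_\ij(u^2)$ to $\Riley_\ij(y)\in\Qbb[y]$ implicit. Your added bookkeeping about even-degree coefficients and the knots-only caveat is consistent with the paper's surrounding remarks.
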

Since Riley proved that his polynomial is monic integral, we have the following immediate corollary.
\begin{cor}\label{cor:R=gg}
For a two bridge knot with the top-most or bottom-most generating pair $\arc_i$ and $\arc_j$,
$\Riley_\ij(y) \in  \Zbb[y]$ and hence 
$\Riley_\ij(u^2)=g_\ij(u)g_\ij^*(u)$ with  
a monic integral polynomial $g_\ij(u) \in \Zbb[u]$.
\end{cor}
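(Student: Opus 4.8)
The plan is to treat this as a short deduction from Theorem \ref{thm:R=gg} together with one extra ingredient that holds only for the distinguished generating pair of a two-bridge knot. Theorem \ref{thm:R=gg} already gives the factorization $\Riley_\ij(u^2)=g_\ij(u)g_\ij^*(u)$ with $g_\ij(u)\in\Qbb[u]$, so the only thing left to prove is that $g_\ij(u)$ in fact has \emph{integer} coefficients. The additional input I would use is Riley's classical result, recalled in Section \ref{sec:originalRiley}: when $\arc_i$ and $\arc_j$ are the top-most (or bottom-most) pair, the generalized Riley polynomial $\Riley_\ij(y)$ coincides with Riley's original representation polynomial, which is monic and lies in $\Zbb[y]$. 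First I would record this identification, since it is the whole reason integrality is available here and fails for a generic crossing (cf. Remark \ref{rmk:constantcounterexample}).

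Granting that $\Riley_\ij(y)\in\Zbb[y]$ is monic, I would run a Gauss's-lemma argument. Substituting $y=u^2$ shows that $\Riley_\ij(u^2)$ is again a monic polynomial in $\Zbb[u]$. By Theorem \ref{thm:R=gg} it factors as $g_\ij(u)\,g_\ij^*(u)$ with both factors in $\Qbb[u]$, and both are monic: the polynomial $g_\ij(u)=\prod_{\rho}(u-\u_\ij)$ is monic by its product form, and the sign correction built into the $*$-notation, $g_\ij^*(u)=(-1)^{\deg(g_\ij)}g_\ij(-u)$, is exactly what makes $g_\ij^*$ monic too. Gauss's lemma then says that a monic integral polynomial which factors into monic rational polynomials has both factors integral, forcing $g_\ij(u)\in\Zbb[u]$, which is the assertion.

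As an alternative I would keep in reserve a direct algebraic-integer argument, useful if one prefers not to cite Gauss's lemma: each root satisfies $\u_\ij^2=\yv_\ij$ by Lemma \ref{lem:lambdaalpha} and Definition \ref{def:uvalue}, and $\yv_\ij$ is a root of the monic integral $\Riley_\ij$, hence an algebraic integer; thus $\u_\ij$ is a root of the monic polynomial $X^2-\yv_\ij$ over a ring of algebraic integers and is therefore itself an algebraic integer. The coefficients of $g_\ij(u)$, being elementary symmetric functions of the $\u_\ij$, are algebraic integers, and since Theorem \ref{thm:R=gg} already guarantees they are rational, they must lie in $\Zbb$. I do not expect any genuine obstacle in this corollary: the only point requiring care is the monicity bookkeeping for the two factors and, more importantly, making explicit that the integrality hypothesis on $\Riley_\ij$ is precisely what pins the statement to the top-most/bottom-most pair of a two-bridge knot, rather than being a feature of the square-splitting phenomenon itself.
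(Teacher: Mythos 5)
Your proposal is correct and follows essentially the same route as the paper, which simply declares the corollary immediate from Theorem \ref{thm:R=gg} once one recalls that Riley's original polynomial (which $\Riley_\ij$ coincides with at the top-most or bottom-most crossing) is monic and integral. Your Gauss's-lemma step (and the algebraic-integer alternative) correctly supplies the small detail the paper leaves implicit, namely why integrality of the monic $\Riley_\ij(u^2)$ descends to the monic rational factors $g_\ij$ and $g_\ij^*$.
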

Corollary \ref{cor:R=gg}  was originally proved in \cite[Theorem 5.1]{jo_symplectic_2020} and Theorem \ref{thm:R=gg}  can be seen as a generalization of the  theorem. 
Remark that Theorem \ref{thm:R=gg}  holds only for knots,  since Theorem \ref{thm:perfect1-1} is no longer true for links and the proof is also not valid anymore. See the Remark \ref{rmk:upolyQcoeff}.
We can see a counter-example of the Whitehead link in Section \ref{sec:whiteheadlink}.

\begin{rmk}\label{rmk:comparisonRiley}
Let us compare the generalized Riley polynomial $R_\ij(y)$ in this paper with the original Riley's polynomial $\Lambda_{\alpha,\beta}(y)$ in \cite{riley_parabolic_1972} and Jo and Kim's polynomial $P_K(u)$ in \cite{jo_symplectic_2020}. Since $R_\ij(y)$ is defined to be monic,  $  \Lambda_{\alpha,\beta}(y)= (-1)^{\deg( \Lambda_{\alpha,\beta})} R_\ij(y)$. Note $P_K(u)$ of a  knot $K$, has an additional single $u$ factor, therefore, $\P_K(u)= u\Riley_\ij(u^2)$. Compare \cite[Theorem 4.9]{jo_symplectic_2020}.
\end{rmk}

\section{Trace field}
\subsection{Invariant trace fields}

The \emph{trace field} $\trfield$ of a representation $\rho$ is the smallest field containing $\Set{ \tr\circ\rho(g) \given g \in \Gk }$ \cite{maclachlan_arithmetic_2003}. We can also consider the \emph{invariant trace field}  
of a  parabolic representation $\rho$, which is generated by
$\Set{ \tr\circ\rho(g^2) \given g \in \Gk }$. 

It is well-known that  the invariant trace field and the trace field  always coincide for a discrete faithful representation of hyperbolic knot complement \cite[Corollary 4.2.2]{maclachlan_arithmetic_2003}. This is also true for any parabolic representation of a knot group as follows.
\begin{prop}
	For any parabolic representation $\rho$, the invariant trace field is the same as the trace field  $\trfield$.
\end{prop}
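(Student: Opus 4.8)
The plan is to prove the nontrivial inclusion $\trfield\subseteq\trfield^{(2)}$, where I write $\trfield^{(2)}:=\Qbb\big(\tr\circ\rho(g^2):g\in\Gk\big)$ for the invariant trace field; the opposite inclusion $\trfield^{(2)}\subseteq\trfield$ is immediate since $g^2\in\Gk$. Everything rests on two elementary $\sl$-identities. Cayley--Hamilton gives $\tr(X^2)=\tr(X)^2-2$, so $\tr\rho(g)^2\in\trfield^{(2)}$ for \emph{every} $g$; thus only the ``sign'' of each trace is ever in doubt, never its square. Second, I will repeatedly play two words against each other through $\tr(XY)+\tr(XY^{-1})=\tr(X)\tr(Y)$, which for a meridian $Y$ of trace $2$ is especially rigid.

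First I would record the standard structural fact (Maclachlan--Reid \cite{maclachlan_arithmetic_2003}) that $\trfield^{(2)}$ equals the trace field of the subgroup $\Gk^{(2)}:=\langle g^2:g\in\Gk\rangle$ generated by squares. Since $H_1(S^3\setminus K;\Zbb/2)=\Zbb/2$ for a knot, $\Gk^{(2)}$ is precisely the index-two subgroup of words of even total meridian-exponent, i.e.\ the kernel of the parity homomorphism $\Gk\to\Zbb/2$ sending every meridian to $1$. Consequently $\tr\rho(\delta)\in\trfield^{(2)}$ for every even word $\delta$, and it remains only to control $\tr\rho(\gamma)$ for odd $\gamma$.

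Fix such a $\gamma$ and set $p:=\tr\rho(\gamma)$; if $p=0$ there is nothing to prove, so assume $p\neq0$. For a meridian $\meri$ the element $\delta:=\gamma\meri^{-1}$ is even, so $r:=\tr\rho(\delta)\in\trfield^{(2)}$. Putting $q:=\tr\rho(\delta\meri^{-1})$, the rigid identity gives $p+q=\tr\rho(\delta)\tr\rho(\meri)=2r$, while $p^2=\tr\rho(\gamma^2)+2$ and $q^2=\tr\rho((\delta\meri^{-1})^2)+2$ both lie in $\trfield^{(2)}$. Hence $pq=\tfrac12\big((p+q)^2-p^2-q^2\big)\in\trfield^{(2)}$ and $(p-q)^2\in\trfield^{(2)}$; since $p-q=2(p-r)$, expanding yields $2pr=p^2+r^2-(p-r)^2\in\trfield^{(2)}$, so $pr\in\trfield^{(2)}$. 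Therefore, as soon as some meridian $\meri$ satisfies $r=\tr\rho(\gamma\meri^{-1})\neq0$, we may divide and conclude $p=\tr\rho(\gamma)=pr/r\in\trfield^{(2)}$.

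The main obstacle, and the only nonformal point, is producing such a meridian. I would argue by contradiction: suppose $\tr\rho(\gamma\meri^{-1})=0$ for \emph{every} meridian $\meri$. Using $\M(\aa)^{-1}=I-\aa\widehat{\aa}$ from \eqref{eqn:Mqbyhat}, this reads $\widehat{\aa}\,\rho(\gamma)\,\aa=p$ for the coloring vector $\aa$ of every meridian; since all meridians are conjugate, the quadratic form $w\mapsto\widehat{w}\,\rho(\gamma)\,w$ is then constantly $p$ on the whole orbit $\rho(\Gk)\aa_1$. I would feed the vectors $\M(\aa_i)^{n}\aa_1$ (coloring vectors of the meridians $\meri_i^{\,n}\meri_1\meri_i^{-n}$) into this form; using $\widehat{Aw}=\widehat{w}\,A^{-1}$ for $A\in\sl$ together with $\widehat{\aa_i}\rho(\gamma)\aa_i=p$, the value becomes a quadratic polynomial in $n$ whose leading coefficient is $p\,\lrbar{\aa_1,\aa_i}^2$. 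Constancy in $n$ forces $\lrbar{\aa_1,\aa_i}=0$ for every $i$ (as $p\neq0$), so all coloring vectors are parallel and $\rho$ is abelian --- contradicting that $\rho$ is a non-abelian parabolic representation. This produces the required meridian, completes the reverse inclusion, and gives $\trfield=\trfield^{(2)}$.
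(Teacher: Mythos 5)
Your proof is correct, but it takes a genuinely different route from the paper's. The paper's argument is essentially a two-line application of \cite[Lemma 3.5.9]{maclachlan_arithmetic_2003}: that lemma gives explicit generators of the invariant trace field as products $\tr\rho(\wg_{i_1}\cdots \wg_{i_k})\cdot\tr\rho(\wg_{i_1})\cdots\tr\rho(\wg_{i_k})$ over a generating set, and since every Wirtinger generator has trace $2\neq 0$, these products generate exactly the same field as the traces of words of length at most three, i.e.\ $\trfield$. You instead work element by element: identifying the invariant trace field with the trace field of the index-two subgroup of even words, you show directly that $\tr\rho(\gamma)$ lies in it for each odd $\gamma$, by playing $\gamma$ against a meridian through $\tr(XY)+\tr(XY^{-1})=\tr(X)\tr(Y)$ and $\tr(X^2)=\tr(X)^2-2$. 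The one genuinely nonformal ingredient you need --- that some meridian $\meri$ has $\tr\rho(\gamma\meri^{-1})\neq 0$ whenever $\tr\rho(\gamma)\neq0$ and $\rho$ is non-abelian --- you prove correctly with the paper's own quandle formalism ($\M(\aa)^{\pm1}=I\pm\aa\widehat{\aa}$, $(\aa\widehat{\aa})^2=0$), forcing all coloring vectors to be parallel under the contrary assumption. What the paper's route buys is brevity and the fact that only one citation does all the work; what yours buys is a sharper, elementwise statement (each individual trace of $\rho$ already lies in the invariant trace field) and a non-vanishing lemma for traces against meridians that is of some independent interest and meshes well with the symplectic-quandle machinery used elsewhere in the paper. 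Two small points worth making explicit if you keep your version: the Maclachlan--Reid identification $\Qbb(\tr\rho(g^2))=\Qbb(\tr\rho(\delta):\delta \text{ even})$ requires the image to be nonelementary, which holds here because a non-abelian parabolic representation is irreducible and an irreducible subgroup generated by parabolics cannot fix a point of $\mathbb{H}^3$ or a pair of points at infinity; and the coloring vectors $\aa_i$ are all nonzero precisely because parabolic representations are required to send every meridian to a non-identity element.
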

\begin{proof}
	The $\rho$-images of $\Gk$ is  generated by $\{\rho(\wg_1),\dots,\rho(\wg_N)\}$ with $\wg_i$'s  Wirtinger generators. 
	By the trace relations, the trace field is generated by the $\rho$-image of the words of at most length 3,  $\{\wg_i,\wg_i\wg_j,\wg_i\wg_j\wg_k\}$.
	By \cite[Lemma 3.5.9]{maclachlan_arithmetic_2003}, the invariant trace field is generated by
	\begin{align*}
		&\tr^2\circ\rho(\wg_i) && \text{ for } 1\leq i \leq N, \\
		&\tr\circ\rho(\wg_i\wg_j)\tr\circ\rho(\wg_i)\tr\circ\rho(\wg_j), && \text{ for } 1\leq i < j \leq N, \\
		&\tr\circ\rho(\wg_i\wg_j\wg_k)\tr\circ\rho(\wg_i)\tr\circ\rho(\wg_j)\tr\circ\rho(\wg_k)  && \text{ for } 1\leq i<j<k \leq N.  
	\end{align*}
	Since  $\tr\circ\rho(\wg_i)=2$, it is the same generating set as $\trfield$.
\end{proof}

One may also study the trace fields of parabolic representations  as a commensurable invariant of $3$-manifolds but we don't consider this direction in this paper. 

\subsection{Riley field, $\u$-field, and trace field}
For a given knot diagram $D$ and a representation $\rho$,  we have the algebraic number $y$-value $y_\ij$ (resp. $u$-value $u_\ij$) for each pair of arcs $\arc_i$ and $\arc_j$. 
Let $\{\yv_\ij\}$ (resp.  $\{\u_\ij\}$) be the set  of the corresponding $\yv$-values (resp. $\u$-values) over all $1\leq i,j\leq N$. Let us call the number fields $\yfield$ (resp. $\ufield$) \emph{Riley field} (resp. \emph{$\u$-field}) generated by $\{y_\ij\}$ (resp. $\{u_\ij\}$), where $y_\ij$ (resp. $u_\ij$) is the corresponding root of Riley polynomial $R_\ij(y)$ (resp.  $u$-polynomial $g_\ij(u)$).

It is obvious that the Riley field $\yfield$ is  a subfield of the trace field $\trfield$, but it seems to be unclear whether it can be defined independently of the choice of knot diagram. However,  we can assure the invariance of a $u$-field as follows. 

\begin{thm}\label{thm:invufield}
	For a parabolic representation $\rho$, the $u$-field  $\ufield$ does not depend on the choice of a knot diagram.
\end{thm}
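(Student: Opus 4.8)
The plan is to identify $\ufield$ with a field attached to $\rho$ alone. The point is that the $u$-values of \emph{any} diagram are brackets $\lrbar{a_i,a_j}$ of the coloring vectors attached to meridians, so it suffices to show that all such brackets, taken over all meridians of $\Gk$, already lie in the field generated by the $u$-values of one fixed diagram.

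First I fix a diagram $D$ with Wirtinger generators $\wg_1,\dots,\wg_N$ and an associated arc-coloring $\aa=(a_1,\dots,a_N)$ of some sign-type, so that $\rho(\wg_i)=\M(a_i)=I+a_i\widehat{a_i}$ and $u_{ij}=\lrbar{a_i,a_j}$. Write $F$ for the field these $u_{ij}$ generate over $\Qbb$; since changing the sign-type only flips signs of some of the generators (Theorem \ref{thm:signchaging}), $F$ is exactly the $u$-field of $D$. The key algebraic fact is the identity $\M(a_k)^{\pm1}v=v\pm\lrbar{a_k,v}\,a_k$ for $v\in\Cbb^2$, immediate from $\M(a_k)^{\pm1}=I\pm a_k\widehat{a_k}$ and $\widehat{a_k}v=\lrbar{a_k,v}$ (see \ref{eqn:Mqbyhat}--\ref{eqn:symphat}). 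From this I prove by induction on word length the following claim: for every $w\in\Gk$, written as a word in the $\wg_i$, and every index $i$, the vector $\rho(w)\,a_i$ is an $F$-linear combination of $a_1,\dots,a_N$. Indeed, applying one further generator $\M(a_k)^{\pm1}$ to a combination $\sum_\ell c_\ell a_\ell$ with $c_\ell\in F$ yields $\sum_\ell c_\ell a_\ell\pm\bigl(\sum_\ell c_\ell u_{k\ell}\bigr)a_k$, whose coefficients again lie in $F$ because each $u_{k\ell}\in F$.

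Next I pass to arbitrary meridians via equivariance. If $g=w\,\wg_i\,w^{-1}$ is a meridian, then by Lemma \ref{lem:paraequi} $\rho(g)=\rho(w)\M(a_i)\rho(w)^{-1}=\M(\rho(w)a_i)$, so by the two-to-one property of $\M$ (Proposition \ref{prop:paramap}) the coloring vector of $g$ is $v_g=\pm\rho(w)a_i$, which by the claim lies in the $F$-span of $a_1,\dots,a_N$. Hence for any two meridians $g,g'$ the bracket $\lrbar{v_g,v_{g'}}$ expands bilinearly as $\sum_{\ell,m}c_\ell c'_m u_{\ell m}\in F$. Since in a knot group every meridian is conjugate, by an element of $\Gk=\langle\wg_1,\dots,\wg_N\rangle$, to $\wg_1$, this shows that the bracket of the coloring vectors of any two meridians belongs to $F$. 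Finally I compare diagrams: let $D'$ be another diagram of $K$ with generators $\wg'_1,\dots,\wg'_M$ and $u$-values $u'_{jk}$. Each $\wg'_j$ is a meridian, so $\wg'_j=w_j\,\wg_{i(j)}\,w_j^{-1}$ for some $w_j\in\Gk$, and its coloring vector is $\pm\rho(w_j)a_{i(j)}$; therefore $u'_{jk}=\pm\lrbar{v_{\wg'_j},v_{\wg'_k}}\in F$ by the previous step, so the $u$-field of $D'$ is contained in that of $D$. The two diagrams play symmetric roles, whence the two $u$-fields coincide.

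The main obstacle is the claim in the second paragraph: one must verify that the $F$-span of the coloring vectors is preserved under multiplication by every $\M(a_k)^{\pm1}$ and that all resulting brackets stay inside $F$ — in effect the first fundamental theorem for $\sl$-brackets. This is precisely what lets one avoid the individual entries of the $a_i$ (these generate $\trfield$, which is typically strictly larger) and remain in the field generated by the determinants $u_{ij}$. Once this is in place the comparison of diagrams is formal, relying only on the standard fact that all meridians are conjugates of Wirtinger generators, and the fixed sign-type used to define $\ufield$ is irrelevant since it affects only signs of the field generators.
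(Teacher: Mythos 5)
Your proof is correct, but it takes a genuinely different route from the paper's. The paper proves invariance by running through a sequence of Reidemeister moves: the first move does not change the set of coloring vectors, and for the second and third moves the new arc's vector is $\aa'_* = \aa_i \pm \lrbar{\aa_i,\aa_j}\aa_j$, so each new bracket $\lrbar{\aa'_*,\aa_k} = u_{ik} \pm u_{ij}u_{jk}$ stays in the old $u$-field; symmetry of the two diagrams then finishes the argument. You instead work intrinsically in $\Gk$: your induction shows that $\rho(w)\,a_i$ stays in the $F$-span of the coloring vectors for every word $w$, hence that the bracket of the coloring vectors of \emph{any} two meridians lies in $F$, and you then use the fact that the Wirtinger generators of any other diagram are meridians, each conjugate to a generator of the first diagram. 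Both arguments are sound (your key identity $\M(a_k)^{\pm1}v = v \pm \lrbar{a_k,v}a_k$ and the bilinearity of the bracket do exactly the work you claim), and your handling of the sign issues — the $\pm$ from the two-to-one property of $\M$ and the irrelevance of the sign-type for the generated field — is careful and correct. What your approach buys is a stronger, manifestly diagram-independent characterization: $\ufield$ is the field generated by the brackets of coloring vectors over all pairs of meridians of $\Gk$, which immediately yields invariance without invoking Reidemeister's theorem move by move. What the paper's approach buys is brevity: it needs only the single quandle relation at a move and avoids the induction over word length. One small point worth making explicit in your write-up is that the Wirtinger generators of the second diagram are conjugate (as oriented meridians, with the orientation of $K$ fixed) to those of the first, so that no extra factor of $\sqrt{-1}$ from orientation reversal enters; this is standard and does not affect the conclusion in any case, since such a factor would only multiply the brackets by $-1$.
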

\begin{proof}
	Let  $\D'$ be the knot diagram connected by a single Reidemeister move from $\D$. Let $(\aa_i)$ and $(\aa'_i)$ be the parabolic quandles of  $\D$ and $\D'$ respectively, and hence $\u_\ij$ and $\u'_\ij$ denote the $\u$-values of them. 
	The first Reidemeister move doesn't change the set of parabolic quandle vectors $\{a_1,a_2,\dots\}$ and hence the set of $\u$-values $\{\u_{11},\u_{12},\dots,\u_\ij,\dots\}$ as well.
	Let us denote $\aa'_*$ the coloring vector of a new arc  by the second or the third Reidemeister moves. Because of the quandle relation of $\aa_*' = \aa_i \pm \lrbar{\aa_i,\aa_j} \aa_j$ for some $i,j$, we can always compute $\aa'_*$  from some arcs $\aa_i$ and $\aa_j$ in the previous diagram $\D$. Look at the newly introduced $\u$-values $\u'_{*k}$ for $k=1,\dots,N$. They are obtained by
	$   \u'_{*k}= \lrbar{\aa'_*,\aa_k}   = \lrbar{\aa_i,\aa_k} \pm \lrbar{\aa_i,\aa_j} \lrbar{\aa_j,\aa_k} = \u_{ik}\pm \u_\ij \u_{jk} \in \ufield.$
	Therefore we have $\Qbb(\{u_\ij'\})\subset \ufield $. Since  the role of $D$ and $D'$ can be exchanged, we conclude that $\Qbb(\{u_\ij'\})= \ufield $.
\end{proof}

Furthermore, we obtain an inclusion relation between the $u$-field and the trace field as follows.

\begin{thm}\label{thm:inclusionFields}
	$$\trfield \subset \ufield$$
\end{thm}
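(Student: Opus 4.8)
The plan is to show that every generator of $\trfield$ already lies in $\ufield$. By the standard trace relations for $\sl$, as used in the previous proposition, $\trfield$ is generated by the traces $\tr\rho(\wg_i)$, $\tr\rho(\wg_i\wg_j)$, and $\tr\rho(\wg_i\wg_j\wg_k)$ of words of length at most three in the Wirtinger generators. The length-one and length-two cases are immediate: $\tr\rho(\wg_i)=2\in\Qbb\subset\ufield$, and by Lemma~\ref{lem:lambdaalpha} together with the definition of the $\yv$-value we have $\tr\rho(\wg_i\wg_j)=2-\yv_{ij}=2-\u_{ij}^2\in\ufield$. So the entire content of the theorem is the length-three trace.

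First I would compute $\tr\rho(\wg_i\wg_j\wg_k)$ directly from $\M(a)=I+a\widehat{a}$ in~(\ref{eqn:Mqbyhat}), using the identities $\widehat{a}\,b=\lrbar{a,b}$ and $\lrbar{a,a}=0$. Writing $P_\ell:=\aa_\ell\widehat{\aa_\ell}$, I expand $(I+P_i)(I+P_j)(I+P_k)$ and collapse each rank-one product via $P_pP_q=\lrbar{\aa_p,\aa_q}\,\aa_p\widehat{\aa_q}=\u_{pq}\,\aa_p\widehat{\aa_q}$. Taking traces with $\tr(\aa_p\widehat{\aa_q})=\widehat{\aa_q}\aa_p=\lrbar{\aa_q,\aa_p}$, the single terms vanish because $\lrbar{\aa,\aa}=0$, the three pairwise terms give $\tr(P_pP_q)=-\u_{pq}^2$, and the triple term gives $\tr(P_iP_jP_k)=\u_{ij}\u_{jk}\u_{ki}$. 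This yields
$$\tr\rho(\wg_i\wg_j\wg_k)=2-\u_{ij}^2-\u_{jk}^2-\u_{ki}^2+\u_{ij}\u_{jk}\u_{ki},$$
which lies in $\ufield$ since each of $\u_{ij},\u_{jk},\u_{ki}$ is by definition a generator of $\ufield$. Once all three types of generators are seen to lie in $\ufield$, the inclusion $\trfield\subset\ufield$ follows.

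There is no serious obstacle here; the computation is routine bilinear algebra, and the only point deserving a remark is well-definedness. Although each individual $\u_{ij}$ depends on the fixed sign-type $\ee$ and on the sign choice of the coloring vectors, flipping the sign of any single $\aa_\ell$ changes exactly two of the three factors in the product $\u_{ij}\u_{jk}\u_{ki}$ and hence leaves it invariant, as it must be since it is extracted from a trace. Thus the displayed formula is an honest element of $\ufield$, and the argument applies uniformly to every parabolic representation $\rho$.
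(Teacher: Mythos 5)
Your proposal is correct and follows essentially the same route as the paper: reduce to the length-three trace via the standard trace relations, write $\rho(\wg_\ell)=I+\aa_\ell\widehat{\aa_\ell}$, expand, and collapse the rank-one products using $\widehat{\aa_p}\aa_q=\lrbar{\aa_p,\aa_q}$; your formula $2-\u_{ij}^2-\u_{jk}^2-\u_{ki}^2+\u_{ij}\u_{jk}\u_{ki}$ agrees with the paper's $2-u_{12}^2-u_{13}^2-u_{23}^2-u_{12}u_{23}u_{13}$ after using $\u_{ki}=-\u_{ik}$. The closing remark on sign-invariance of the triple product is a nice sanity check not made explicit in the paper, but the argument is the same.
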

\begin{proof}
	
	Because $\tr\circ\rho(\wg_i) =2  $ and $\tr\circ\rho(\wg_i\wg_j)= 2-(\u_\ij) ^2$, it is enough to show that  $\tr\circ\rho( \wg_i\wg_j\wg_k) \in \ufield$. 
	Without loss of  generality, 	let us  consider $\rho( \wg_1\wg_2\wg_3)$ with simplified the indices. Parabolic quandle map $\M$ is surjective, we have quandle vectors $\aa_i$ such that $\M(\aa_i)=\rho(\wg_i)$ for $i=1,2,3$. As recall the hat-notation in Section \ref{sec:symquan},
	we have $\rho(\wg_i)= I + \aa_i \widehat{\aa_{i}}$ for $i=1,2,3$.
	Then, 
	\begin{align*}
		\rho(\wg_1\wg_2\wg_3) &= (I + \aa_1 \widehat{\aa_{1}})(I + \aa_2 \widehat{\aa_{2}})(I + \aa_3 \widehat{\aa_{3}})\\
		&= 	I+\aa_1 \widehat{\aa_{1}}+\aa_2 \widehat{\aa_{2}}+\aa_3 \widehat{\aa_{3}}
		+\aa_1 \widehat{\aa_{1}}\aa_2 \widehat{\aa_{2}} +\aa_1\widehat{\aa_{1}}\aa_3 \widehat{\aa_{3}}+ \aa_2\widehat{\aa_{2}}\aa_3 \widehat{\aa_{3}} \\
		&~~~~~~~+ \aa_1 \widehat{\aa_{1}}\aa_2 \widehat{\aa_{2}}\aa_3 \widehat{\aa_{3}}.
	\end{align*}
	By the computation of $\tr(\aa_i\widehat{\aa_i})=0$ and
	$\widehat{\aa_i}{\aa_j}=u_\ij=-\tr({\aa_i}\widehat{\aa_j}))$, we have
	$$
	\tr(\rho(\wg_1\wg_2\wg_3))=2-u_{12}^2 - 	u_{13}^2-u_{23}^2 -u_{12}u_{23}u_{13} \in \ufield. \qedhere $$
	
\end{proof}

By the above Theorem \ref{thm:invufield} and Theorem \ref{thm:inclusionFields} , we propose the following conjecture. 

\begin{conj}\label{conj:ufield=trfield}
	$\ufield =\trfield$ 
\end{conj}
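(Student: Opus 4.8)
The inclusion $\trfield\subset\ufield$ is already Theorem \ref{thm:inclusionFields}, so the plan is to establish the reverse inclusion by proving that every $u$-value $u_{ij}=\lrbar{a_i,a_j}$ lies in $\trfield$. First I would isolate the obstruction to a naive attempt. The trace field is generated by the quantities $u_{ij}^2=2-\tr\rho(\wg_i\wg_j)$ and by the triple products $u_{ij}u_{jk}u_{ik}$ arising from $\tr\rho(\wg_i\wg_j\wg_k)$ (see the proof of Theorem \ref{thm:inclusionFields}). Under the sign change $a_i\mapsto -a_i$, which fixes $\rho$ because $\M(a_i)=\M(-a_i)$, each square and each triple product is invariant, whereas a single $u_{ij}$ changes sign. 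Hence no polynomial in the squares and triple products alone can isolate an individual $u_{ij}$; the extra information needed to break this parity must come from the quandle relations, not merely from traces.

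The key input is a rational proportionality supplied by each crossing. At a crossing $\cc_n$ with under-arcs $\arc_i$ (incoming), $\arc_j$ (outgoing) and over-arc $\arc_k$, the sign-type relation of Definition \ref{def:signedrel} reads $\epsilon_n a_j=a_i\pm u_{ik}a_k$ (using \eqref{eqn:symphat}), with $\epsilon_n\in\{\pm1\}$. Pairing this vector identity with $a_k$ and using $\lrbar{a_k,a_k}=0$ yields the clean relation $u_{ik}=\epsilon_n u_{jk}$; that is, two $u$-values built from different index sets are proportional by a rational sign. Substituting into the triple product gives $u_{ij}u_{jk}u_{ik}=\epsilon_n\,u_{ij}u_{jk}^2\in\trfield$, and since $u_{jk}^2\in\trfield$ I would conclude $u_{ij}\in\trfield$ at every crossing where $u_{jk}\neq0$. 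Thus the $u$-value of the two under-arcs at a crossing lies in $\trfield$.

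With the under-arc pairs in hand I would bootstrap to all pairs, using the triple products as dividers. Since consecutive arcs $\arc_i,\arc_{i+1}$ are exactly the under-arcs at their shared crossing, the previous step gives $u_{i,i+1}\in\trfield$ for all $i$. Inductively, the triple product $u_{1,j}u_{j,j+1}u_{1,j+1}\in\trfield$ together with $u_{1,j},u_{j,j+1}\in\trfield$ forces $u_{1,j+1}\in\trfield$, so that all $u_{1j}\in\trfield$; finally $u_{1i}u_{ij}u_{1j}\in\trfield$ delivers $u_{ij}\in\trfield$ for every pair, whence $\ufield\subset\trfield$. Each division is by an element of $\trfield$, and non-abelianness of $\rho$ guarantees at least one nonvanishing pairing to seed the process.

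The main obstacle is precisely this chain of nonvanishing hypotheses: the argument divides repeatedly by $u_{jk}$, by $u_{1j}$, and so on, and a particular parabolic representation may possess enough vanishing pairings (locally commuting meridians) to disconnect the induction and leave some $u_{ij}$ unreachable. Non-abelianness only forbids \emph{all} pairings from vanishing, not the specific ones the bootstrap requires. The natural remedy is to exploit the diagram-independence of $\ufield$ (Theorem \ref{thm:invufield}) and pass to a diagram on which the relevant pairings are nonzero; but controlling the vanishing locus uniformly over every parabolic representation, and ruling out genuinely degenerate configurations, is the delicate point that keeps the statement at the level of a conjecture rather than a theorem.
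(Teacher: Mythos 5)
The first thing to note is that the paper does not prove this statement: it appears as Conjecture \ref{conj:ufield=trfield}, supported only by the one-sided inclusion $\trfield \subset \ufield$ of Theorem \ref{thm:inclusionFields}, the diagram-independence of $\ufield$ (Theorem \ref{thm:invufield}), and computer experiments; a complete argument is available only for two-bridge knots, via the distinctness of the roots of Riley's polynomial and Corollary \ref{cor:distinctroot}. So there is no paper proof to compare yours against, and your proposal has to be judged on its own terms as an attempt at the open reverse inclusion $\ufield \subset \trfield$.

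On those terms, your intermediate identities are correct and are in fact already in the paper in other clothing: pairing the crossing relation with the over-arc vector to get $u_{ik}=\e_n u_{jk}$ is exactly Lemma \ref{lem:signambiguity}, and the membership of the triple products $u_{ij}u_{jk}u_{ik}$ in $\trfield$ is read off from the trace formula in the proof of Theorem \ref{thm:inclusionFields}. Your first step is even slightly stronger than you state: when $u_{jk}=0$ at a crossing, the relation forces $a_j=\pm a_i$ and hence $u_{ij}=0\in\trfield$ anyway, so $u_{i,i+1}\in\trfield$ holds for all consecutive pairs unconditionally. The genuine gap is the one you name in your final paragraph: the bootstrap from consecutive pairs to arbitrary pairs divides at each step by $u_{1,j}u_{j,j+1}$, and for an irreducible parabolic representation the vanishing set $\{(i,j):u_{ij}=0\}$ is governed by the partition of the meridian images by their parabolic fixed points, which can disconnect precisely the chain of pairs your induction follows. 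Irreducibility guarantees only that each arc has \emph{some} non-commuting partner, not the partners the induction needs, and invoking Theorem \ref{thm:invufield} to change the diagram changes the arcs and the vanishing pattern in a way you have not controlled. What you have actually established is the unconditional fact that all consecutive $u$-values lie in $\trfield$, together with a conditional argument for the rest; the statement remains, as in the paper, a conjecture.
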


Furthermore, from   lots of computer experiments,  we also expect the following stronger statement.
\begin{conj}\label{conj:yfield=ufield}
	$\yfield =\ufield$ 
\end{conj}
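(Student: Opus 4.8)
The inclusion $\yfield\subseteq\ufield$ is free: by Lemma~\ref{lem:lambdaalpha} each generator of $\yfield$ satisfies $\yv_{ij}=\u_{ij}^2$, so $\yv_{ij}\in\ufield$. The whole content is the reverse inclusion, and my plan is to attack it by Galois theory. Fix a parabolic representation $\rho$ and a sign-type $\ee$, so that all $\u_{ij}=\lrbar{\aa_i,\aa_j}$ are determined (up to the irrelevant global flip $\aa\mapsto-\aa$). Since $\u_{ij}^2=\yv_{ij}\in\yfield$ for every pair, $\ufield=\yfield(\{\u_{ij}\})$ is the splitting field over $\yfield$ of a family of polynomials $x^2-\yv_{ij}$; hence $\ufield/\yfield$ is Galois with $\mathrm{Gal}(\ufield/\yfield)$ an elementary abelian $2$-group. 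Any $\tau$ in this group acts by a \emph{sign pattern}, $\tau(\u_{ij})=\delta_{ij}\,\u_{ij}$ with $\delta_{ij}\in\{\pm1\}$, and from $\u_{ji}=-\u_{ij}$ one gets $\delta_{ji}=\delta_{ij}$. Proving $\yfield=\ufield$ is then equivalent to showing that the only admissible $\tau$ is the identity, i.e. that every such $\delta$ is identically $+1$.

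The engine I would use is the quandle recursion rather than the bare Grassmannian. Pairing the sign-type relation $\e_{k+1}\aa_{k+1}=\aa_k+\sigma_k\lrbar{\aa_k,\aa_{i_k}}\aa_{i_k}$ of Definition~\ref{def:signedrel} with an arbitrary arc $\aa_l$ (and using bilinearity of $\lrbar{\ ,\ }$) yields, for all $k$ and $l$,
\[
\e_{k+1}\,\u_{k+1,l}=\u_{k,l}+\sigma_k\,\u_{k,i_k}\,\u_{i_k,l},
\]
a relation whose only coefficients are the fixed signs $\e_{k+1},\sigma_k$. Applying $\tau$ and subtracting $\delta_{k,l}$ times the original relation gives
\[
\e_{k+1}(\delta_{k+1,l}-\delta_{k,l})\,\u_{k+1,l}=\sigma_k(\delta_{k,i_k}\delta_{i_k,l}-\delta_{k,l})\,\u_{k,i_k}\,\u_{i_k,l}.
\]
If the two quantities $\u_{k+1,l}$ and $\u_{k,i_k}\u_{i_k,l}$ are $\yfield$-linearly independent, then both coefficients must vanish, forcing the two scalar conditions $\delta_{k+1,l}=\delta_{k,l}$ and $\delta_{k,i_k}\delta_{i_k,l}=\delta_{k,l}$.

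The first condition is where connectedness enters. It says that for each fixed $l$ the sign $\delta_{k,l}$ is independent of the first index as $k$ runs along the under-arcs; because for a \emph{knot} these arcs form a single cycle (Notation~\ref{nota:crossingindex}), $\delta_{k,l}$ is independent of $k$ for every $l$. Combined with the symmetry $\delta_{kl}=\delta_{lk}$ this forces $\delta$ to be a global constant $c\in\{\pm1\}$; the second condition then reads $c^2=c$, so $c=+1$ and $\tau=\mathrm{id}$, whence $\ufield=\yfield$. (A small point to be checked: $\u_{k+1,l}\neq0$ for some $l$, which holds since $\rho$ is non-abelian so not all coloring vectors are parallel.) This is precisely the step that breaks for links, where the arcs split into several cycles and the sign can jump between components, consistent with Remark~\ref{rmk:upolyQcoeff}.

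The main obstacle, and the reason the statement remains conjectural, is exactly the $\yfield$-linear independence of $\u_{k+1,l}$ and $\u_{k,i_k}\u_{i_k,l}$ invoked above. A single $\u$ and a product of two $\u$'s both have squares in $\yfield$ ($\yv_{k+1,l}$ and $\yv_{k,i_k}\yv_{i_k,l}$ respectively), so an accidental dependence $\u_{k+1,l}=\lambda\,\u_{k,i_k}\u_{i_k,l}$ with $\lambda\in\yfield$ is a priori conceivable and would collapse the two scalar conditions into one, letting a nontrivial $\tau$ survive. Ruling out such coincidences is the crux: I would try to supply the missing independence either by invoking the determinantal (Plücker) identities $\u_{ij}\u_{kl}-\u_{ik}\u_{jl}+\u_{jk}\u_{il}=0$ to force the sign pattern $\delta$ into coboundary form $\delta_{ij}=\eta_i\eta_j$ before running the recursion, or by feeding in the triple products $\u_{ij}\u_{jk}\u_{ik}$, which lie in $\trfield$ by the computation in the proof of Theorem~\ref{thm:inclusionFields} and rigidify the relative signs. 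Either route reduces the conjecture to a concrete arithmetic independence statement about the specific algebraic numbers $\u_{ij}$ attached to a parabolic representation; the recursion-plus-connectedness skeleton is robust, and essentially all the genuine difficulty is concentrated in excluding these accidental $\yfield$-dependencies.
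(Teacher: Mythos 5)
This statement is Conjecture \ref{conj:yfield=ufield} in the paper: the authors offer no proof, only computational evidence together with the partial results of Theorem \ref{thm:equivuiny}, Corollary \ref{cor:distinctroot}, and the resolution of the two-bridge case. So there is no argument of the paper's to compare yours against, and the only question is whether your sketch closes the gap. It does not, and you say so yourself. Your framework is correct as far as it goes: $\yfield\subseteq\ufield$ is immediate from $y_{ij}=u_{ij}^2$; the extension $\ufield/\yfield$ is multiquadratic with elementary abelian $2$-group Galois group acting by signs $\delta_{ij}$; pairing the signed quandle relation with $\aa_l$ does give $\e_{k+1}u_{k+1,l}=u_{k,l}+\sigma_k u_{k,i_k}u_{i_k,l}$; and the deduction that a constant symmetric $\delta$ must equal $+1$ is fine. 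But the entire proof lives in the step you flag: the $\yfield$-linear independence of $u_{k+1,l}$ and $u_{k,i_k}u_{i_k,l}$. This is not a technicality one can expect to wave away. The paper's own Theorem \ref{thm:equivuiny} shows that for a single pair the analogous independence genuinely fails whenever the irreducible factor $g_0(u)$ is even (equivalently $u_{ij}\notin\Qbb(y_{ij})$), and Corollary \ref{cor:distinctroot} ties this to multiple roots of the Riley polynomial, which do occur ($7_4$ at $\cc_5$, $9_{29}$, $8_{18}$). So nontrivial sign automorphisms over subfields generated by some of the $y_{ij}$ really exist, and your argument must show they cannot be assembled coherently over all of $\yfield$ --- which is exactly the content of the conjecture restated in different language.

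A second, smaller gap: your connectedness step propagates $\delta_{k+1,l}=\delta_{k,l}$ around the cycle of arcs for a fixed $l$, but this propagation breaks at any $k$ with $u_{k+1,l}=0$ (the constraint becomes vacuous there), and vanishing $u$-values do occur for irreducible representations, not only abelian ones; one would need to reconnect the chain through other indices using $\delta_{ij}=\delta_{ji}$ and the Pl\"ucker relations, which you mention only as a possible tool. In short, the reduction to a concrete arithmetic independence statement about the $u_{ij}$ is a potentially useful reformulation --- and your observation that the link case fails precisely at the connectedness step is consistent with Remark \ref{rmk:upolyQcoeff} --- but it is a reformulation of the conjecture, not a proof of it.
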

Conjecture \ref{conj:yfield=ufield} implies not only Conjecture \ref{conj:ufield=trfield}, but also $\yfield= \trfield$. Hence it follows that the trace field is generated by the trace of words of only length $2$ without worrying the words of length $3$. On the other hand, regardless of the truth of $\yfield=\trfield$, it would be independently interesting whether or not  Riley field $\yfield$ is an invariant, i.e., independent of the choice of a knot diagram.

\begin{rmk}
	We can naturally consider another number fields: $\Qbb(\{y_\cc\})$ and $\Qbb(\{u_\cc\})$ are generated by all $y$-values and $u$-values at each crossing, and these are obviously subfields of $\yfield$ or $\ufield$, respectively. Fortunately, we can find an example of $9_{35}$  such that $\Qbb(\{y_\cc\})$ is a proper subfield of $\trfield$ and, moreover, is not invariant under the choice of knot diagram. On the other hand, we couldn't find an example of $\Qbb({\{u_\cc\}}) \neq \trfield$  for   all knot diagrams in \cite{knotinfo}  up to 12 crossings. For $u$-field case,  we suspect  that $\ufield = \Qbb(\{u_\cc\})$ could be true.
\end{rmk}

%
%

\begin{rmk}
	For two bridge knots, it is easy to see that the trace field is generated by the $\yv$-value at the generating pair, i.e., $\trfield = \Qbb(\yv_\cc)$ where $\cc$ is the top-most (or bottom-most) crossing. Moreover,  in \cite[Proposition 5.5]{jo_symplectic_2020}, it was also shown that  $\u_\cc \in \Qbb(\yv_\cc)$  for  $\yv_\cc=\u_\cc^2$ and hence   Conjecture \ref{conj:yfield=ufield} is true for all two bridge knots.	
\end{rmk}

Although the answer to whether $\ufield=\yfield$ could not be obtained, 
we can obtain an equivalent condition for $\Qbb(\u_\ij)=\Qbb(\yv_\ij)$  which is a kind of `local' statement of Conjecture \ref{conj:yfield=ufield} for each $\ij$ pair.

\begin{thm}\label{thm:equivuiny}
	Consider $\yv$-value   $\yv_\ij$ (resp. $\u$-value  $\u_\ij$) at a pair of arcs $\arc_i$ and $\arc_j$, where  $y_\ij=u_\ij^2$.
	Let the minimal polynomial  of  $\yv_\ij$ (resp. $\u_\ij$) be  $R_0(y)\in \Qbb[y]$ (resp. $g_0(u)\in \Qbb[u]$). 
	Then the followings are equivalent.
	\begin{enumerate}[(i)]
		\item  $\Qbb(\u_\ij)= \Qbb(\yv_\ij)$, or equivalently $\u_\ij \in \Qbb(\yv_\ij)$. 
		\item $R_0(u^2)$ is reducible in $\Qbb[u]$
		\item $g_0(u)$ has an odd-degree term.
	\end{enumerate}
	In particular,   $R_0(u^2)=g_0(u)g_0^*(u)$ if the above equivalent conditions hold and $R_0(u^2)=g_0(u)$ otherwise.
\end{thm}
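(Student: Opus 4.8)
The plan is to establish the chain of equivalences by exploiting the fundamental relation $y_{ij} = u_{ij}^2$ together with the square-splitting identity from Proposition \ref{prop:Rileyg(u)}, namely $R_0(u^2) = g(u)g^*(u)$ where $g$ is the minimal polynomial piece carrying the $u$-value. The central observation is that $\mathbb{Q}(y_{ij}) = \mathbb{Q}(u_{ij}^2)$ is automatically a subfield of $\mathbb{Q}(u_{ij})$, and since $u_{ij}$ satisfies $x^2 - y_{ij} = 0$ over $\mathbb{Q}(y_{ij})$, the extension $[\mathbb{Q}(u_{ij}):\mathbb{Q}(y_{ij})]$ is either $1$ or $2$. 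Condition (i) says precisely that this index is $1$, i.e. that $u^2 - y_{ij}$ factors over $\mathbb{Q}(y_{ij})$, equivalently that $u_{ij}$ is already expressible as a rational function of $y_{ij}$.

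First I would prove (i) $\Leftrightarrow$ (ii). The polynomial $R_0(y)$ is the minimal polynomial of $y_{ij}$, so $\deg R_0 = [\mathbb{Q}(y_{ij}):\mathbb{Q}]$. Comparing degrees, $R_0(u^2)$ has degree $2\deg R_0 = 2[\mathbb{Q}(y_{ij}):\mathbb{Q}]$, while the minimal polynomial of $u_{ij}$ over $\mathbb{Q}$ has degree $[\mathbb{Q}(u_{ij}):\mathbb{Q}]$. Since $u_{ij}$ is a root of $R_0(u^2)$, its minimal polynomial divides $R_0(u^2)$. By the tower law, $[\mathbb{Q}(u_{ij}):\mathbb{Q}] = [\mathbb{Q}(u_{ij}):\mathbb{Q}(y_{ij})]\cdot[\mathbb{Q}(y_{ij}):\mathbb{Q}]$, which equals $\deg R_0(u^2)$ exactly when the index is $2$ (i.e. $R_0(u^2)$ is irreducible, so (i) fails) and equals $\tfrac12\deg R_0(u^2)$ when the index is $1$. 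Thus (i) holds iff $R_0(u^2)$ is \emph{reducible}, which is (ii). Here $g_0(u)$, the minimal polynomial of $u_{ij}$, is the irreducible factor; in the reducible case $R_0(u^2) = g_0(u)g_0^*(u)$ with $g_0^* $ the minimal polynomial of $-u_{ij}$, and in the irreducible case $R_0(u^2)=g_0(u)$ directly. This simultaneously yields the final ``In particular'' clause.

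Next I would handle (ii) $\Leftrightarrow$ (iii). The key is that $R_0(u^2)$ is, by construction, an \emph{even} polynomial in $u$: every monomial has even degree. An even polynomial of degree $2d$ is reducible over $\mathbb{Q}$ precisely when $R_0(y)$ is not itself irreducible in the variable $u^2$-sense, which by the degree argument above is equivalent to $g_0(u)$ having degree $d$ rather than $2d$. I would argue that $g_0$ fails to be even exactly when it is a proper factor: if $g_0(u)$ were even, then $g_0(u) = g_0(-u)$, forcing $g_0 = g_0^*$ up to sign and hence $g_0(u) \mid R_0(u^2)$ as a square or as the whole, which by degree count means $g_0$ has degree $2d$, i.e. $g_0(u) = R_0(u^2)$ and $R_0(u^2)$ is irreducible. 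Conversely, if $R_0(u^2)$ is reducible, then $g_0$ has degree $d < 2d$ and cannot coincide with its own $*$-reflection unless $u_{ij}$ and $-u_{ij}$ share a minimal polynomial, which would reintroduce evenness; tracking the sign shows $g_0$ must contain an odd-degree term.

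The main obstacle I anticipate is the (ii) $\Leftrightarrow$ (iii) step, specifically ruling out the degenerate possibility that $g_0(u)$ is reducible further or that $g_0 = g_0^*$ while still having an odd-degree term. The cleanest route is to use the involution $u \mapsto -u$ acting on roots and the fact that $g_0$ is irreducible (being a minimal polynomial), so $g_0$ is either fixed by this involution or swapped with $g_0^*$; a minimal polynomial is even if and only if it is fixed, and ``having an odd-degree term'' is exactly the negation of being even. I would therefore phrase (iii) as ``$g_0$ is not an even polynomial'' and close the loop by noting $g_0$ even $\iff$ $g_0 = g_0^*$ $\iff$ $u_{ij}$ and $-u_{ij}$ are $\mathbb{Q}$-conjugate $\iff$ $g_0(u) = R_0(u^2)$ is irreducible, which is the negation of (ii).
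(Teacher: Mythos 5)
Your proof of (i) $\Leftrightarrow$ (ii) via the tower law $[\Qbb(u_{ij}):\Qbb]=[\Qbb(u_{ij}):\Qbb(y_{ij})]\cdot[\Qbb(y_{ij}):\Qbb]$ is correct and takes a genuinely different route from the paper, which instead exhibits, for one direction, an explicit annihilating polynomial $F(u)=uh(u^2)-f(u^2)$ of degree less than $\deg R_0(u^2)$, and for the other invokes Selmer's factorization $R_0(u^2)=p(u)p^*(u)$ into irreducibles. Your degree argument also gives $R_0(u^2)=g_0(u)$ in the irreducible case for free; note however that the companion claim $R_0(u^2)=g_0(u)g_0^*(u)$ in the reducible case still needs the coprimality of $g_0$ and $g_0^*$ (or Selmer's result), which your sketch asserts rather than proves.

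The genuine gap is in (ii) $\Leftrightarrow$ (iii), in the direction ``$g_0$ even $\Rightarrow$ $R_0(u^2)$ irreducible''. The inference ``$g_0=g_0^*$, hence $g_0\mid R_0(u^2)$ as a square or as the whole, which by degree count means $\deg g_0=2d$'' does not hold up: $g_0=g_0^*$ only repeats the divisibility $g_0\mid R_0(u^2)$ you already had, and even if you knew $g_0^2\mid R_0(u^2)$ the degree count would give $\deg g_0\le d$, not $2d$. The missing ingredient is the substitution trick: if $g_0$ is even, write $g_0(u)=q(u^2)$; then $q(y_{ij})=g_0(u_{ij})=0$, so $R_0\mid q$ by minimality of $R_0$, whence $\deg g_0=2\deg q\ge 2\deg R_0=\deg R_0(u^2)$ and therefore $g_0=R_0(u^2)$ is irreducible. (The paper runs the same idea in contrapositive form: from $R_0(u^2)=g_0g_0^*$ and $g_0=q(u^2)$ it deduces $R_0(y)=q(y)^2$, contradicting irreducibility of $R_0$.) Separately, your closing equivalence ``$g_0$ even $\iff g_0=g_0^*$'' fails for the irreducible odd polynomial $g_0(u)=u$, which does occur (the value $u_{ij}=0$ arises, e.g.\ for $8_{18}$); all three conditions of the theorem hold in that case, so the statement survives, but the chain as you phrased it does not.
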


\begin{proof}
	First, suppose that $\u_\ij\in  \Qbb(\yv_\ij)$. Then $\u_\ij = {f(y_\ij)}/{h(y_\ij)}$ where $$\deg_y f(y) ,  \deg_y h(y) < \deg_y R_0(y).$$ 
	Let $F(u):=u h(u^2)- f(u^2)$.  
	Since
	$$\deg_u F(u) \leq 2 \deg_y R_0(y) -1 < \deg_uR_0(u^2)$$ and 		 $F(\u_\ij)=0$,     $R_0(\u^2)$ is never irreducible in $\Qbb[u]$ and we have (ii).
	Second, suppose $R_0(u^2)$ is reducible. 
	Then we have
	$R_0(\u^2)=p(u)p^*(u)$ for an irreducible $p(u) \in \Qbb[u]$   by \cite[(9.1)]{selmer_irreducibility_1956}.
	Without loss of generality, we can assume $p(u_\ij)=0$. Since two monic irreducible polynomials $p(u)$ and $g_0(u)$  have a common root $\u_\ij$,  $g_0(u)=p(u)$.
		Suppose that $g_0(u)$ has no odd-degree term and  $g_0(u)=q(u^2)$ for some $q(y)\in\Qbb[y]$, then $R_0(y)=(q(y))^2$ and it contradicts  $R_0(y)$ is irreducible. Hence we obtain (iii). 
	Finally, suppose $g_0(u)$ has an odd-degree term. Then $g_0(u)=u f(u^2) +h(u^2)$ with non-zero $f(u)$ and we have $u_\ij= -h(y_\ij) / f(y_\ij) \in \Qbb(y_\ij)$ of (i). 
	It completes the equivalence and
	the final assertions are trivially obtained in the proof.
\end{proof}
In the above theorem,  the minimal polynomials $R_0(y)$ and $g_0(u)$ are irreducible factors of Riley polynomial $R_\ij(y)$ and  $u$-polynomial $g_\ij(u)$, respectively.
Note that the degree of $R_\ij(y)$ and $g_\ij(u)$ should be the same by the definition. The fact of  $\Qbb(\yv_\ij)\neq\Qbb(\u_\ij)$ implies that the Riley polynomial $R_\ij(y)$ at the pair $\ij$ must have multiple roots as follows.

\begin{cor}\label{cor:distinctroot}
	If $\Riley_\ij(y)$ has distinct roots, then   $\Qbb(\yv_\ij)=\Qbb(\u_\ij)$.
\end{cor}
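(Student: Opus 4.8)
The plan is to derive the corollary directly from the equivalence established in Theorem~\ref{thm:equivuiny}, so that no new computation is needed beyond a short bijection argument. Recall that $\Qbb(\yv_\ij)=\Qbb(\u_\ij)$ holds if and only if the minimal polynomial $g_0(u)$ of $\u_\ij$ has an odd-degree term (condition (iii) there). I would therefore argue by contraposition: assuming $\Qbb(\yv_\ij)\neq\Qbb(\u_\ij)$, I will exhibit a repeated root of $\Riley_\ij(y)$.

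So suppose the equivalent conditions of Theorem~\ref{thm:equivuiny} fail. Then $g_0(u)$ has no odd-degree term, i.e.\ $g_0$ is an even polynomial, and by the final assertion of that theorem $R_0(u^2)=g_0(u)$. First I would dispose of the degenerate case: if $\u_\ij=0$, its minimal polynomial would be $g_0(u)=u$, which does contain an odd-degree term, contradicting our assumption; hence $\u_\ij\neq 0$ and $-\u_\ij\neq\u_\ij$. Since $g_0$ is even we have $g_0(-\u_\ij)=g_0(\u_\ij)=0$, so $-\u_\ij$ is another root of $g_0$, and therefore a root of the full $u$-polynomial $g_\ij(u)$ because $g_0\mid g_\ij$. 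Now I invoke the bijection of \eqref{eqn:1-1nab} between the roots of $g_\ij(u)$, the roots of $\Riley_\ij(y)$, and $\Xpar$ (valid for a pair of arcs exactly as for a crossing): the two distinct roots $\u_\ij$ and $-\u_\ij$ correspond to two \emph{distinct} classes $\rho,\rho'\in\Xpar$. Squaring sends both to the same $\yv$-value, $\u_\ij^2=(-\u_\ij)^2$, so the two factors $(y-\yv_\ij(\rho))$ and $(y-\yv_\ij(\rho'))$ of $\Riley_\ij(y)=\prod_{\rho\in\Xpar}(y-\yv_\ij(\rho))$ coincide. Thus $\Riley_\ij(y)$ has a multiple root, which is precisely the contrapositive of the claim.

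The only points requiring care are the degenerate $\u_\ij=0$ case handled above, and the bijectivity step: one must be sure that $-\u_\ij$, being a genuine root of $g_\ij(u)$, is the $\u$-value of an \emph{actual} second representation rather than a mere Galois conjugate with no representation attached to it. This is exactly what the correspondence \eqref{eqn:1-1nab} guarantees, and it is where the bijection from Theorem~\ref{thm:perfect1-1} (hence the restriction to knots) is used. Everything else is formal bookkeeping, so I expect no serious obstacle beyond invoking these two facts correctly.
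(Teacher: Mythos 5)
Your proof is correct and takes essentially the same route as the paper: both argue by contraposition through condition (iii) of Theorem \ref{thm:equivuiny}, deducing that the minimal polynomial $g_0(u)$ of $\u_\ij$ is even and that this forces a multiple root of $\Riley_\ij(y)$. The only cosmetic difference is in the last step --- the paper writes $g_0(u)=h(u^2)$ and concludes $h(y)^2\mid\Riley_\ij(y)$ via Proposition \ref{prop:Rileyg(u)}, whereas you exhibit a second representation with $u$-value $-\u_\ij$ and the same $y$-value; both rest on the same underlying facts.
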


\begin{proof}
	Suppose that $\u_\ij \notin \Qbb(\yv_\ij)$. Then the minimal polynomial $g(u)$ of $\u_\ij$ is an even polynomial by (iii) of Theorem \ref{thm:equivuiny} and hence $g(u)=g^*(u)$.
	Let us put $g(u)=h(u^2)$. Since $g(u)g^*(u) \mid R_\ij(u^2)$ by Proposition \ref{prop:Rileyg(u)}, $(h(y))^2$ should divide $\Riley_\ij(y)$. Therefore $\Riley_\ij(y)$ has a multiple root and it contradicts  the assumption.
\end{proof}

As an immediate corollary, we obtain the answer to Conjecture \ref{conj:yfield=ufield} for two bridge knots.

\begin{prop}[Proposition 5.5 in \cite{jo_symplectic_2020}]
	For two bridge knots, Conjecture \ref{conj:yfield=ufield} is true, i.e.,  
	$\yfield=\trfield=\ufield$. 
\end{prop}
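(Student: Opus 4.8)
\section*{Proof proposal}

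The plan is to squeeze all three fields between $\yfield$ and $\ufield$ using the inclusions already available, namely $\yfield \subseteq \trfield \subseteq \ufield$ (the first is the obvious fact recalled after Theorem~\ref{thm:inclusionFields}, and the second is Theorem~\ref{thm:inclusionFields} itself), and then to collapse the sandwich using two facts special to the two bridge case. First I would fix the generating pair of arcs $\arc_i,\arc_j$ meeting at the top-most crossing $\cc$ of the Conway normal form, where the generalized Riley polynomial $\Riley_\cc(\yv)$ coincides with Riley's original one. For this crossing I recall that $\trfield=\Qbb(\yv_\cc)$ and that $\Riley_\cc(\yv)$ has distinct roots (Section~\ref{sec:originalRiley}(3)). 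Since the distinct-roots hypothesis holds at $\cc$, Corollary~\ref{cor:distinctroot} applies and gives $\Qbb(\u_\cc)=\Qbb(\yv_\cc)$; combined with $\yv_\cc=\u_\cc^2$ this yields the chain $\Qbb(\u_\cc)=\Qbb(\yv_\cc)=\trfield$, which drives everything that follows.

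Next I would dispose of $\yfield=\trfield$. As $\yv_\cc$ is one of the generators of the Riley field, we get $\trfield=\Qbb(\yv_\cc)\subseteq\yfield$, and since $\yfield\subseteq\trfield$ always holds, equality follows immediately.

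The substantive step is $\ufield\subseteq\trfield$, for which I would show that every $\u$-value already lies in $\trfield$. Normalizing the generating pair by Proposition~\ref{prop:normalizeQ} as $\aa_i=\sm{1\\0}$ and $\aa_j=\sm{0\\\u_\cc}$, one starts from two coloring vectors whose entries lie in $\Qbb(\u_\cc)=\trfield$. Because $K$ is two bridge, every arc of the Conway diagram is colored by a vector obtained from this pair by finitely many quandle operations $\qr$ and $\ql$ (up to the signs $\pm1$ of a fixed sign-type); by \eqref{eqn:symphat} each such operation is a polynomial with integer coefficients in the entries of the vectors involved, so by induction every coloring vector $\aa_k$ has entries in $\Qbb(\u_\cc)=\trfield$. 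Consequently $\u_{k\ell}=\lrbar{\aa_k,\aa_\ell}=\det(\aa_k,\aa_\ell)\in\trfield$ for every pair, whence $\ufield\subseteq\trfield$. Together with $\trfield\subseteq\ufield$ this gives $\ufield=\trfield$, and with the previous paragraph we conclude $\yfield=\trfield=\ufield$.

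The one step carrying real content is the assertion that every coloring vector of a two bridge diagram lies in the sub-structure generated by the two bridge arcs; this is the quandle incarnation of the two-generator property of two bridge knot groups, and it is exactly where two-bridgeness enters. I expect this to be the main obstacle to make fully rigorous, precisely because one cannot shortcut it by applying Corollary~\ref{cor:distinctroot} pair-by-pair: for a general pair $\ij$ the polynomial $\Riley_\ij(\yv)$ may have multiple roots, as the $7_4$ computation in Remark~\ref{rmk:constantcounterexample} already shows for a two bridge knot, so the direct generation argument is genuinely needed.
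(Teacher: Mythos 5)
Your proposal is correct and follows essentially the same route as the paper: sandwich the three fields via $\yfield\subseteq\trfield\subseteq\ufield$, reduce everything at the generating crossing $\cc$ to $\Qbb(\yv_\cc)$ and $\Qbb(\u_\cc)$, and collapse the chain using Corollary \ref{cor:distinctroot} together with Riley's distinct-roots theorem. The only difference is that you spell out the generation step $\ufield\subseteq\Qbb(\u_\cc)$ (all coloring vectors of the Conway diagram lie in the quandle generated by the two bridge arcs), which the paper simply declares to be clear.
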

\begin{proof}
	Considering a generating crossing $\cc$ of the two bridge knot, 
	it is clear that $\yfield= \Qbb(\yv_\cc)$ and  $\ufield= \Qbb(\uv_\cc)$ and	
	we have 
	$$
	\Qbb(\yv_\cc)=\yfield\subset\trfield\subset\ufield =\Qbb(\uv_\cc). 
	$$
By Corollary \ref{cor:distinctroot} and  the fact that $\Lambda_{\alpha,\beta}(y)$ has distinct roots due to Riley, we obtain $\Qbb(\yv_\cc)=\Qbb(\u_\cc)$ and it completes the proof.
\end{proof}

\section{Homomorphisms between knot groups}


Let us consider crossing loops $\alpha$ and $\alpha'$ at the crossings $\cc$ and $\cc'$, as in Figure \ref{fig:crossingloop}, of two knots $K$ and $K'$ respectively. 
Let $\Riley(y)$ and $\Riley'(y)$ denote Riley polynomials at $\cc$ and $\cc'$ of $K$ and $K'$, respectively.

\begin{thm}\label{thm:homomorphism}
	If there is a homomorphism $\phi:\Gk \to \Gk'$ with $\phi(\alpha)=\alpha'$ then any irreducible factor of $\Riley'(y)$  divides $\Riley(y)$. In particular, if $\phi$ is an epimorphism then $\Riley'(y)\,|\,\Riley(y)$, and if $\phi$ is isomorphism then $\Riley'(y) = \Riley(y)$.  	
\end{thm}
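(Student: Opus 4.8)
The plan is to transfer parabolic representations from $K'$ to $K$ by precomposition with $\phi$, and to read off the divisibility of Riley polynomials from the resulting correspondence of $\yv$-values. Given a non-abelian parabolic representation $\rho'$ of $K'$, i.e.\ a point of $\Xpar(K')$, I would set $\rho := \rho'\circ\phi : \Gk \to \sl$. The hypothesis $\phi(\alpha)=\alpha'$ then yields the decisive identity
\begin{equation*}
\yv_\alpha(\rho) \;=\; 2-\tr\bigl(\rho'(\phi(\alpha))\bigr) \;=\; 2-\tr\bigl(\rho'(\alpha')\bigr)\;=\;\yv_{\alpha'}(\rho'),
\end{equation*}
so that every value attained by $\yv_{\alpha'}$ on $\Xpar(K')$ is attained by $\yv_\alpha$ on the pulled-back representation. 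Provided $\rho$ lies in $\Xpar(K)$, this already shows that each root of $\Riley'(y)$ is a root of $\Riley(y)$.

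The step requiring genuine care --- and the one I expect to be the main obstacle --- is verifying that $\rho=\rho'\circ\phi$ is again a \emph{non-abelian parabolic} representation. Since $\Gk$ is normally generated by a single meridian, $\rho$ is parabolic as soon as $\phi$ carries one meridian to an element whose $\rho'$-image has trace $2$; the condition $\phi(\alpha)=\alpha'$ at least forces the induced map $H_1(\Gk)\to H_1(\Gk')$ to be the identity on $\Zbb$ (both $\alpha$ and $\alpha'$ are homologous to twice a meridian), so meridians go to homology class one, but pinning the trace to exactly $2$ is the delicate point. Non-abelianity, by contrast, is automatic when $\phi$ is an epimorphism, because then $\rho(\Gk)=\rho'(\Gk')$ is non-abelian; for a general homomorphism the pullback could degenerate to an abelian representation, in which case Proposition \ref{prop:crossing}(4) forces the corresponding root $\yv_{\alpha'}(\rho')$ to vanish, and this degenerate situation has to be treated separately.

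Granting that $\rho\in\Xpar(K)$, I would finish by an elementary Galois argument. Both $\Riley(y)$ and $\Riley'(y)$ have rational coefficients by Theorem \ref{thm:R=gg}, so an irreducible factor $p(y)$ of $\Riley'(y)$ is the minimal polynomial over $\Qbb$ of each of its roots; since one such root is a root of $\Riley(y)\in\Qbb[y]$, the entire Galois orbit consists of roots of $\Riley(y)$, whence $p(y)\mid \Riley(y)$. For the epimorphism case I would upgrade this to a divisibility respecting multiplicities: when $\phi$ is surjective the precomposition map $\rho'\mapsto \rho'\circ\phi$ is injective on conjugacy classes (two pullbacks agreeing on $\Gk$ agree on $\phi(\Gk)=\Gk'$), so it embeds the root multiset of $\Riley'(y)=\prod_{\rho'\in\Xpar(K')}(y-\yv_{\alpha'}(\rho'))$ into that of $\Riley(y)$, giving $\Riley'(y)\mid\Riley(y)$. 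Finally, if $\phi$ is an isomorphism the same construction applied to $\phi^{-1}$ (which satisfies $\phi^{-1}(\alpha')=\alpha$) provides the reverse divisibility, and hence $\Riley'(y)=\Riley(y)$.
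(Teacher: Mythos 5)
Your proposal is essentially the paper's own argument: precompose to get $\phi^*(\rho')=\rho'\circ\phi$, use $\yv_\alpha(\rho'\circ\phi)=2-\tr(\rho'(\phi(\alpha)))=\yv_{\alpha'}(\rho')$ to see that every root of $\Riley'(y)$ occurs as a root of $\Riley(y)$, pass to irreducible factors using that both polynomials lie in $\Qbb[y]$, note that $\phi^*$ is injective on $\Xpar(K')$ when $\phi$ is onto so as to get divisibility with multiplicity, and run the same argument for $\phi^{-1}$ in the isomorphism case. The one point you single out as delicate --- that $\rho'\circ\phi$ is again a non-abelian parabolic representation, i.e.\ that $\phi^*$ really lands in $\Xpar(K)$ --- is exactly the point the paper's proof passes over in silence: it simply writes down $\phi^*:\Xpar(K')\to\Xpar(K)$ with no justification. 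Your worry is legitimate: the hypothesis $\phi(\alpha)=\alpha'$ alone does not force $\phi$ to carry meridians of $K$ to meridians of $K'$, so the trace-$2$ condition on the pullback is not automatic; and if the pullback were abelian, its $\yv$-value would be $0$ by Proposition \ref{prop:crossing}(4), and $0$ need not be a root of $\Riley(y)$. So you have not missed an ingredient that the paper supplies; rather, you have correctly located an implicit hypothesis (meridian-preservation, which is the setting the introduction advertises) that both your argument and the paper's need in order to be complete.
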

\begin{proof}
	We have $\phi^*:\Xpar(K') \to \Xpar(K)$ by $\phi^*(\rho')=\rho:=\rho'\circ\phi$ for $[\rho'] \in \Xpar(K')$.
	Then,	the $\yv$-value	$\y_{\cc'}$ of $K'$  should be  the same as  $y_\cc$  of $K$ 
 since 
	$y_\cc'=2-\tr\circ\rho'(\alpha')=2-\tr\circ\rho(\alpha)= y_\cc$. Therefore, the set of $\{\y_{\cc'}(\rho')\mid \rho' \in \Xpar'\}$ is the subset of $\{\y_\cc(\rho)\mid \rho \in \Xpar\}$ (with ignoring multiplicity). So we conclude that $ \Riley(y)= \prod_{\rho\in \Xpar(K)} (y-y_{\cc}(\rho))$ contains all irreducible factors of $\Riley'(y)$. If $\phi$ is an epimorphism, $\phi^*$ becomes monomorphism. So the roots of $R(y)$ should contain all roots of $R'(y)$ concerning the multiplicity and we conclude that the $R'(y)$ itself should divides $R(y)$. If $\phi$ is an automorphism, $R(y)$ also divides $R'(y)$ and hence $R(y)=R'(y)$.
\end{proof}

\begin{rmk}\label{rmk:linkhomo}
	Theorem \ref{thm:homomorphism}  holds for oriented links. 
	For knot cases, the Riley polynomial is independent of the orientation of knot and hence the theorem is for  unoriented knots. 	 However, for link cases, Riley polynomial depends on the choice of orientation of a link-component since the arc-coloring vector $\aa_i$ is transformed into $\ii \aa_i$ under the change of orientation. This phenomenon will be found in the example of $5_1^2$ in Section \ref{sec:whiteheadlink}.
\end{rmk}

\begin{exam}
	Let us see the Riley polynomials of $7_4$ knots in Section \ref{sec:7_4}. By Theorem \ref{thm:homomorphism}, we can easily prove that there is no homomorphism sending  $\cc_5$ to any other crossing $R_{\cc_i}(y)$   with $i\neq 5$ since $R_{\cc_5}(y)$  is not divided by any factor of $R_{\cc_i}(y)$ and vice versa.
\end{exam}

If $\Xpar$ is decomposed into several irreducible $\Qbb$-subvarieties, i.e.,  $$\Xpar=\Xpar^1 \cup \Xpar^2 \cup \cdots\cup \Xpar^n,$$ then $\Xpar^i \cap \Xpar^j=\varnothing$ for all $i\neq j$  because of the assumption that $\Xpar$ is zero-dimensional. Therefore the Riley polynomial $\Riley_\cc(y)$ is factored  as  $\Riley_\cc(y)=\Riley^1_\cc(y)\Riley^2_\cc(y)\cdots\Riley_\cc^n(y) \in \Qbb[y]$ by definition. Moreover, we can think of a kind of `partial' Riley polynomial $\Riley_\cc^i(y)$ for a given component $\Xpar^i$, i.e. 
\begin{equation*}
	\Riley^i_\cc(y):= \prod_{ \rho\in \Xpar^i } (y- y_\cc(\rho) ). 
\end{equation*}
Remark that each $\Riley_\cc^i(y)$  may not be irreducible although $\Xpar^i$ is  irreducible. See the example for $R_{\cc_5}(y)$ of $7_{4}$ in Section \ref{sec:7_4}.

The above Theorem \ref{thm:homomorphism} can be  elaborated for each irreducible component $\Xpar^i$  by the same proof.  
\begin{thm}\label{thm:homomorfactor}
	Let $\phi^*:\Xpar(K')\to\Xpar(K)$ with $\Xpar(K)=\cup_i \Xpar^i(K)$ and $\Xpar(K')=\cup_j \Xpar^j(K')$. Let $\phi^* (\Xpar^i(K')) \subset \Xpar^j(K)$.
	Under the same condition of Theorem \ref{thm:homomorphism}, any irreducible factor of the Riley polynomial 
	${\Riley^j}(y)$ for $\Xpar^j(K')$
	divides $\Riley^i(y)$ for $\Xpar^i(K)$. In particular, if $\phi$ is epimorphism then ${\Riley^j}(y)\,|\,\Riley^i(y)$, and if $\phi$ is isomorphism then ${\Riley^j}(y) = \Riley^i(y)$.  	
\end{thm}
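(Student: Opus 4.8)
The plan is to rerun the proof of Theorem \ref{thm:homomorphism} essentially verbatim, the only new ingredient being an initial restriction of the pullback $\phi^*$ to the prescribed components. By the hypothesis $\phi^*(\Xpar^j(K'))\subset\Xpar^i(K)$ we obtain a restricted map $\phi^*\colon\Xpar^j(K')\to\Xpar^i(K)$, and every subsequent step only needs to be checked for this restriction rather than for all of $\Xpar$. Thus no new geometric input is required; the content is entirely in transporting the $\yv$-value identity along $\phi^*$ componentwise.

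First I would record the key identity on the component. For $\rho'\in\Xpar^j(K')$ put $\rho:=\phi^*(\rho')=\rho'\circ\phi\in\Xpar^i(K)$. Since $\phi(\alpha)=\alpha'$, exactly as in Theorem \ref{thm:homomorphism} we have
\[
\yv_{\cc'}(\rho')=2-\tr\circ\rho'(\alpha')=2-\tr\circ\rho(\alpha)=\yv_{\cc}(\rho).
\]
Hence the root set $\{\yv_{\cc'}(\rho')\mid\rho'\in\Xpar^j(K')\}$ of $\Riley^j(y)$ is contained, ignoring multiplicity, in the root set $\{\yv_{\cc}(\rho)\mid\rho\in\Xpar^i(K)\}$ of $\Riley^i(y)$. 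Since each partial Riley polynomial lies in $\Qbb[y]$ (the components $\Xpar^i$, $\Xpar^j$ being $\Qbb$-irreducible), any irreducible factor $p(y)\in\Qbb[y]$ of $\Riley^j(y)$ is the minimal polynomial of one of its roots $y_0$; as $y_0$ is also a root of the rational polynomial $\Riley^i(y)$, we get $p(y)\mid\Riley^i(y)$, which is the general statement.

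For the refinements I would upgrade the set inclusion to an inclusion with multiplicity. If $\phi$ is an epimorphism then $\phi^*$ is injective, hence so is its restriction to $\Xpar^j(K')$; because $\yv_{\cc'}(\rho')=\yv_{\cc}(\phi^*(\rho'))$, this restriction maps the fibre $\{\rho'\mid\yv_{\cc'}(\rho')=y_0\}$ injectively into $\{\rho\mid\yv_{\cc}(\rho)=y_0\}$, so the multiplicity of $y_0$ in $\Riley^j$ is at most its multiplicity in $\Riley^i$, giving $\Riley^j(y)\mid\Riley^i(y)$. If $\phi$ is an isomorphism, applying the epimorphism case to both $\phi$ and $\phi^{-1}$ (whose pullback carries $\Xpar^i(K)$ back onto $\Xpar^j(K')$ since a bijection of varieties permutes irreducible components) yields divisibility in both directions between the two monic polynomials, whence $\Riley^j(y)=\Riley^i(y)$. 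The only point needing care—and the mild obstacle—is precisely this last multiplicity bookkeeping: the zero-dimensionality of $\Xpar$ makes all fibres finite, so once the injectivity of the restricted $\phi^*$ is noted, the passage from root-set inclusion to divisibility of the partial polynomials themselves is elementary.
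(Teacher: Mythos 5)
Your proof is correct and takes essentially the same route as the paper, which simply observes that the proof of Theorem \ref{thm:homomorphism} goes through verbatim once $\phi^*$ is restricted to the prescribed components; the extra details you supply (rationality of the partial Riley polynomials via $\Qbb$-irreducibility of the components, and the component-permutation argument in the isomorphism case) only make explicit what the paper leaves implicit. You also silently corrected the $i$/$j$ index swap between the hypothesis and the conclusion of the stated theorem, which is clearly the intended reading.
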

We use this theorem to analyze the structure of $\Xpar(8_{18})$ in Section \ref{sec:8_18str}.

\section{Computations}\label{sec:examplecomputations}
We present several computation examples. The brief procedure is as follows. Firstly, we consider a knot diagram\footnote{We will use the knot diagrams in Rolfsen table \cite{rolfsen_knots_1990}.} with the indexed arcs and crossings.  Then we create  the system of equation for the normalization in Theorem \ref{sec:normalization} and solve them.  
Then we obtain $\Xpar$ the set of parabolic representations and their arc-colorings $(\aa_1,\dots,\aa_N) \in \QQ = \Qu \cup \Qv $. As  solving the equation, we can also check the sign-types and obtain the obstruction classes by total-sign. To obtain $u$-polynomials and Riley polynomials, we compute the $u$-value and $y$-value at each crossing by finding the polynomial whose roots are $u$-values and $y$-values, respectively.
Note that from the relation of $R(u^2)=g(u)g^*(u)$, we obtain an $u$-polynomial just by factoring $R(u^2)$ with up to the choice of $g(u)$ and $g^*(u)$  for each irreducible factor, which depends on the sign-type.  For the canonical  $u$-polynomials in the sense of Remark \ref{rmk:canonicalsigntype}, we need to change the sign-type for each crossing by Theorem \ref{thm:signchaging}. 

Finally,
we compute complex volumes and cusp-shapes of $\rho$ by $z$-variables (resp.  $w$-variables) of the knot diagram as in \cite[Section 6]{kim_octahedral_2018}, where these variables can be obtained from the parabolic quandles computed above by the formula of  \cite[Theorem 3.2]{cho_quandle_2018} (resp. \cite[Section 2.3]{cho_optimistic_2016}).
One can verify these numerical results are consistent with the computations by SnapPy \cite{SnapPy} except for the opposite sign of cusp shape since the orientation convention of the boundary torus might be different.

%

\subsection{$4_1$ knot}\label{sec:4_1}
Let us consider a knot diagram and number  each arc and crossing respectively as in Figure \ref{fig:4_1fig}. We have four arc-colorings $\aa_1,\aa_2,\aa_3,\aa_4$ and quandle relations at each crossing $\cc_1,\cc_2,\cc_3,\cc_4$. 

\begin{figure}[!ht]
	\begin{center}
		\includegraphics{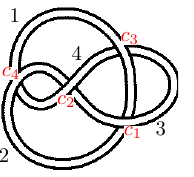}
	\end{center}
	\caption{Labeling a knot diagram of $4_1$ knot.}
	\label{fig:4_1fig}
\end{figure}

\subsubsection{Quandle equations for $\Qu$}
Let us begin with  $a_1=\qvec{1}{0}$ and $a_3=\qvec{0}{u}$. Then each arc-coloring is computed successively along $ \cc_1, \cc_4$  as follows,
\begin{equation}\label{eq:4_1coloring}\aligned
	a_2&\overset{\cc_1}{=}a_1-\lrbar{ a_1, a_3 } a_3=\begin{pmatrix} 1 \\ -u^2 \end{pmatrix},\\ 
	a_4&\overset{\cc_4}{=}a_1-\lrbar {a_1, a_2} a_2=\begin{pmatrix} 1+u^2 \\ -u^4 \end{pmatrix}.
	\endaligned
\end{equation}
Note that there are two relations at $\cc_2$ and $\cc_3$ that have not been used yet. 
We have all arc-colorings $\aa_i$ which should satisfy the equation at $c_2$ as well,
$$a_3 - (a_2 + \lrbar {a_2, a_4 } a_4) = \qvec{h_1(u)}{h_2(u)}=0.$$
By polynomial GCD of $h_1$ and $h_2$, we have  
\begin{equation}\label{eq:4_1 u}
	g(\u):=u^2+u+1.
\end{equation}
Note that every root of $g(u)$ corresponds to each conjugacy class of parabolic representations.  

\subsubsection{Obstruction classes}
The final remaining equation at $\cc_3$ should be satisfied up to sign,
$$\pm a_4 =a_3- \lrbar {a_3, a_1} a_1.$$
We can check that it holds with only the minus sign, not the plus sign and hence every parabolic representation of $4_1$ knot has the negative obstruction class.
Remark that  the $g(u)$ of (\ref{eq:4_1 u}) is the $\u$-polynomial at $\cc_1$ with sign-type $(1,1,-1,1)$.

\subsubsection{Quandle equations for $\Qv$}
As putting  $a_1=\qvec{1}{0}$ and $a_3=\qvec{v}{0}$, and solving the equations for $\Qv$ in a similar way to $Q_u$, one can check that there is  only a solution of abelian representation. Note that all $u$-values (and $y$-values as well) are zero for abelian representations. 

\subsubsection{Riley polynomials}

By Lemma \ref{lem:lambdaalpha}, we obtain $y$-values from arc-colorings as follows. 
$$
\begin{aligned}
	\yv_{c_1}=\yv_{c_3}=&~ u^2,\\
	\yv_{c_2}=\yv_{c_4}=&~ u^4.  \\
\end{aligned}
$$
Note that this $y$-values do not depend on the choice of  sign-type.
By the bijection between the roots of $g(u)$ and representations in $\Qu$ by Theorem \ref{thm:perfect1-1}, we compute Riley-polynomials by the following definition,
$$\Riley_{c_i}(y)=\prod\limits_{\rho} (y-\yv_{c_i})=\prod\limits_{\Qu} (y-\yv_{c_i})\prod\limits_{\Qv} (y-\yv_{c_i}),$$
where $\Qv$ is an empty set.
Then, we obtain
\begin{align*}
	\Riley_{c_1}=\Riley_{c_2}=\Riley_{c_3}=\Riley_{c_4}=&~ 1 + y + y^2.
\end{align*}

\subsubsection{$u$-polynomials}
First, we compute $u$-values from the arc-colorings of (\ref{eq:4_1coloring}),
$$
\begin{aligned}
	\u_{c_1}=&~ u, &
	\u_{c_2}=&~   u^2 ,  \\
	\u_{c_3}=&~ -u, &
	\u_{c_4}=&~- u^2 .
\end{aligned}
$$
Recall that these $\u$-values  has the sign-type of (1,1,-1,1). So  we change the $u$-values to be with the canonical sign-types at each crossing by using the formula of Theorem \ref{thm:signchaging}. Then we have  
$$
\begin{aligned}
	\u_{c_1}=&~ - u, &
	\u_{c_2}=&~   u^2 ,  \\
	\u_{c_3}=&~ -u, &
	\u_{c_4}=&~ u^2 .
\end{aligned}
$$
By $g(u)=u^2+u+1=0$, we also derive canonical $u$-polynomials as follows,
\begin{align*}
	g_{c_1}=g_{c_3}=&~ 1 -  u + u^2 \\
	g_{c_2}= g_{c_4}= &~ 1 +  u + u^2.
\end{align*}

\subsubsection{Complex volumes and Cusp shapes}
The complex volumes and cusp shapes are as follows.
\begin{figure}[H]
	$$\begin{array}{|c|c|c|}
		\hline
		\text{Solution to } g(u) & \ii (\vol + \ii \cs)  & \text{Cusp shape}\\ 
		\hline
		\begin{aligned}
			u ~&=&\!\!\!-0.5&- 0.86603 \ii \rule{0em}{1.1em} \\
			u ~&=&\!\!\! -0.5&+ 0.86603 \ii \\
		\end{aligned} &
		\begin{aligned}
			&+ 2.02988 \ii  \rule{0em}{1.1em}\\
			&- 2.02988 \ii\\
		\end{aligned} &
		\begin{aligned}
			&-3.46410 \ii  \rule{0em}{1.1em}\\
			&+3.46410 \ii\\
		\end{aligned}\\
		\hline
	\end{array}
	$$
\end{figure}

\subsection{$7_4$ knot}\label{sec:7_4}
Let us label each arc and each crossing of $7_4$ knot diagram as in Figure \ref{fig:7_4fig}. We have $7$ arc-colorings $\aa_1,\dots,\aa_7$ and quandle relations at each crossing $\cc_1,\dots,\cc_7$. 

\begin{figure}[h!]
	\begin{center}
		\includegraphics{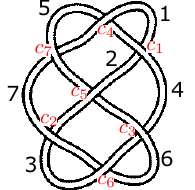}
	\end{center}
	\caption{Labeling a knot diagram of $7_4$ knot.}\label{fig:7_4fig}
\end{figure}

\subsubsection{Quandle equations for $\Qu$}
Let us begin with  $a_1=\qvec{1}{0}$ and $a_4=\qvec{0}{u}$. Then,  each arc-coloring is computed successively along $\cc_1, \cc_4, \cc_5, \cc_3, \cc_6$  as follows,
\begin{equation}\label{eq:7_4coloring}\aligned
	a_2&\overset{\cc_1}{=}a_1+\lrbar{ a_1, a_4 } a_4=\begin{pmatrix} 1 \\ u^2 \end{pmatrix},\\ 
	a_5&\overset{\cc_4}{=}a_4+\lrbar {a_4, a_1} a_1=\begin{pmatrix} -u \\ u \end{pmatrix},\\ a_6&\overset{\cc_5}{=}a_5+\lrbar {a_5, a_2} a_2=\begin{pmatrix} -u^3-2 u \\ -u^5-u^3+u \end{pmatrix},\\
	a_3&\overset{\cc_3}{=}a_4-\lrbar {a_4, a_6} a_6=\begin{pmatrix} u^3 (u^2+2)^2 \\ u^9+3u^7+u^5-2u^3+u \end{pmatrix},\\
	a_7&\overset{\cc_6}{=}a_6+\lrbar {a_6, a_3} a_3=\begin{pmatrix} -u^{11}-6 u^9-12 u^7-8 u^5-u^3-2 u \\ -u^{13}-5 u^{11}-7 u^9+2 u^5-3 u^3+u \end{pmatrix}.\\
	\endaligned
\end{equation}
Note that there are two relations at $\cc_2$ and $\cc_7$ that have not been used yet. 
We have all arc-colorings $\aa_i$ which should satisfy the equation at $c_2$ as well,
$$a_3 - (a_2 + \lrbar {a_2, a_7 } a_7) = \qvec{h_1(u)}{h_2(u)}=0.$$
By polynomial GCD of $h_1$ and $h_2$, we have  
\begin{equation}\label{eq:7_4 u}
	g(\u):=(u^3+2u-1)(u^4+u^3+2u^2+2u+1)=0.
\end{equation}
Note that every root of $g(u)$ corresponds to each conjugacy class of parabolic representations.  

\subsubsection{Obstruction classes}
The final remaining equation at $\cc_7$ should be satisfied up to sign,
$$\pm a_1 =a_7+ \lrbar {a_7, a_5} a_5.$$
We can check that it holds with  only the minus sign, not the plus sign and hence every parabolic representation of $7_4$ knot has the negative obstruction class.
Remark that  the $g(u)$ of (\ref{eq:7_4 u}) is the $\u$-polynomial at $\cc_1$ with sign-type $(1,1,1,1,1,1,-1)$.


\subsubsection{Arc-coloring vectors}
There are two irreducible components of $g(\u)=g_1(\u)g_2(\u)$
with $g_1(u)=u^3+2u-1$ and $g_2(u)=u^4+u^3+2u^2+2u+1$ and the arc-colorings of each component can be written in the number field $\Qbb(u_i)$ with a root $u_i$ of $g_i$ as follows.
$$
\def\arraystretch{1.3}
\begin{array}{c|c|c}
	\hline
	\text{arc} & g_1(u)=u^3+2u-1 & g_2(u)=u^4+u^3+2u^2+2u+1 \\
	\hline
	a_1 & \qvec{ 1}{ 0} & \qvec{ 1}{ 0}  \rule{0em}{1.7em} \\ [1.5ex] 
	\hline
	a_2 & \qvec{ 1}{ u^2 } &\qvec{ 1}{ u^2 } \rule{0em}{1.7em} \\ [1.5ex]
	\hline
	a_3 & \qvec{ u}{ u^2-2u+1 } & \qvec{2u^3+u^2+3u+3}{ -u^3-u-1 } \rule{0em}{1.7em} \\ [1.5ex]
	\hline
	a_4 & \qvec{ 0}{ u } & \qvec{ 0}{ u }\rule{0em}{1.7em} \\ [1.5ex]
	\hline
	a_5 & \qvec{ -u}{ u } & \qvec{ -u}{ u } \rule{0em}{1.7em} \\ [1.5ex]
	\hline
	a_6 & \qvec{ -1}{ -u^2-u+1 } & \qvec{ -u^3-2u}{ -1 } \rule{0em}{1.7em} \\ [1.5ex]
	\hline
	a_7 & \qvec{ -u^2-1}{ u^2  } & \qvec{ -u^2-1}{u^2  } \rule{0em}{1.7em} \\ [1.5ex]
	\hline
\end{array}
$$

\subsubsection{Quandle equations for $\Qv$}

In a similar way to the case of $4_1$, one can check that there is no representation in $\Qv$ except  abelian representations.

\subsubsection{Riley polynomials}

We obtain $y$-values as follows. 
$$
\begin{aligned}
	\yv_{c_1}=\yv_{c_4}=\yv_{c_7}=&~ u^2,\\
	\yv_{c_2}=\yv_{c_3}=\yv_{c_6}=&~ u^6+u^5+3 u^4+2 u^3+2 u^2+u-1,\\
	\yv_{c_5}=&~ u^6+2 u^4+u^2.  \\
\end{aligned}
$$
%
Since a Riley polynomial $R_{\cc_i}(y)$ at $\cc_i$ is the polynomial whose roots are the above $y$-value $y_{\cc_i}$, each $R_{\cc_i}(y)$ is determined as follows,
\begin{align*}
	\Riley_{c_1}=\Riley_{c_4}=\Riley_{c_7}=\Riley_{c_2}=\Riley_{c_3}=\Riley_{c_6} =&~ (y^3+4y^2+4y-1)(y^4+3y^3+2y^2+1)\\
	\Riley_{c_5} =&~ (y^3+y^2+13y-4)(y^2+y+1)^2.
\end{align*}

\subsubsection{$u$-polynomials}
First, we compute $u$-values from the arc-colorings of (\ref{eq:7_4coloring}),
$$
\begin{aligned}
	\u_{c_1}=&~ u,\\
	\u_{c_2}=\u_{c_3}=&~  2 u^2 + u^4,  \\
	\u_{c_4}=\u_{c_7}=&~ -u,\\
	\u_{c_5}=&~-u - u^3, \\
	\u_{c_6}=&~ -2 u^2 - u^4.
\end{aligned}
$$
Recall that these $\u$-values have the sign-type of (1,1,1,1,1,1,-1). So  we change the $u$-values to be with the canonical sign-types at each crossing by using the formula of Theorem \ref{thm:signchaging}. Then we have  
$$
\begin{aligned}
	\u_{c_1}=\u_{c_4}=u_{c_7}=&~ -u,\\
	\u_{c_2}=\u_{c_3}=\u_{c_6}=&~ -2 u^2 - u^4 ,\\
	\u_{c_5}=&~-u - u^3.  \\
\end{aligned}
$$
After straightforward computation, we also derive canonical $u$-polynomials as follows,
\begin{align*}
	g_{c_1}=g_{c_4}=g_{c_7}=g_{c_2}=g_{c_3}=g_{c_6} =&~ (1 + 2 u + u^3) (1 - 2 u + 2 u^2 - u^3 + u^4)\\
	g_{c_5} =&~  (2 + 5 u + 3 u^2 + u^3)(1 - u + u^2)^2.
\end{align*}
Note that these canonical $\u$-polynomials respect the diagrammatic symmetry as in the Riley polynomials, whereas the $u$-polynomials with sign-type $(1,1,1,1,1,1,-1)$ does not.

\subsubsection{Complex volumes and Cusp shapes}
The complex volumes  and  cusp shapes are  as follows.
\begin{figure}[H]
	$$\begin{array}{|c|r|r|}
		\hline
		\text{Solution to } g(u) & \ii (\vol + \ii \cs)\rule{1.5em}{0em} & \text{Cusp shape}\rule{2em}{0em}\\ 
		\thickhline
		\begin{aligned}
			u ~&=&\hspace{-2ex} -0.22670 &- 1.46771 \ii \rule{0em}{1.1em} \\
			u ~&=&\hspace{-2ex} -0.22670 &+ 1.46771 \ii \\
			u ~&=&\hspace{-2ex} 0.45340 & \\
		\end{aligned} &
		\begin{aligned}
			9.44074&- 5.13794 \ii  \rule{0em}{1.1em}\\
			9.44074&+ 5.13794 \ii\\
			-0.78720 &\\
		\end{aligned} &
		\begin{aligned}
			-0.68207 &+ 3.20902 \ii  \rule{0em}{1.1em}\\
			-0.68207 &- 3.20902 \ii\\
			-12.63587 &\\
		\end{aligned}\\
		\hline
		\begin{aligned}
			u ~&=&\hspace{-2ex} -0.62174 &- 0.44060 \ii \rule{0em}{1.1em} \\
			u ~&=&\hspace{-2ex} -0.62174 &+ 0.44060 \ii \\
			u ~&=&\hspace{-2ex} 0.12174&- 1.30662 \ii \\
			u ~&=&\hspace{-2ex} 0.12174&+ 1.30662 \ii \\
		\end{aligned} &
		\begin{aligned}
			3.28987&- 2.02988 \ii  \rule{0em}{1.1em}\\
			3.28987&+ 2.02988 \ii\\
			3.28987&+ 2.02988 \ii\\
			3.28987&- 2.02988 \ii\\
		\end{aligned}&
		\begin{aligned}
			-4 &+ 3.46410 \ii  \rule{0em}{1.1em}\\
			-4 &- 3.46410 \ii\\
			-4 &- 3.46410 \ii\\
			-4 &+ 3.46410 \ii\\
		\end{aligned}\\
		\hline
	\end{array}
	$$
	\label{table:7_4}
\end{figure}

Let us recall that Chern-Simons invariant  is defined by modulo $\pi^2$. The CURVE project \cite{curveproject} shows that the Chern-Simons invariant is $0.42887...$ which is consistent with  $9.44074...$ in our table since $0.42887=-(9.44074-\pi^2)$. 
Remark that the Chern-Simons invariants in the CURVE project are computed only modulo $\pi^2/6$, but our computation is  modulo $\pi^2$. For example, the complex volume ($\vol+\ii \cs$) of geometric representation is $2.02988- 3.28987\ii$  but the CURVE data shows $2.02988...$ which  agrees modulo $\frac{\pi^2}{6} \ii$.

%
%
\begin{rmk}\label{rmk:7_4}
The Riley polynomial at $\cc_5$ has the Riley polynomial of $4_1$ as a factor. We can see that the complex volumes and cusp shapes  corresponding to such a factor are closely related to the complex volume and cusp shape of $4_1$  as follows.
$$\cv(\rho) \equiv \cv(4_1) \mod{\frac{\pi^2}{6}},$$    
$$\text{cusp}(\rho)\equiv \text{cusp}(4_1) \mod{\Zbb},$$
where $\rho$ is a representation  of the factor of $y^2+y+1$ which is the same as  the Riley polynomial of $4_1$.    
This phenomenon  with factor-sharing Riley polynomials is observed very often and the geometric interpretation seems worthy of further study.         
\end{rmk}

\subsection{ $9_{29}$ knot}\label{sec:9_29}
Let us label each arc and each crossing of $9_{29}$ knot diagram as in Figure \ref{fig:9_29fig}. We have $9$ arc-colorings $\aa_1,\dots,\aa_9$ and quandle relations at each crossing $\cc_1,\dots,\cc_9$.

\begin{figure}[h!]
	\begin{center}
		\includegraphics{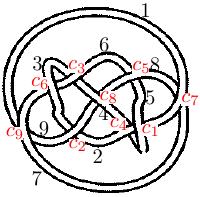}	\end{center}
	\caption{Labeling the diagram of $9_{29}$ knot.}\label{fig:9_29fig}
\end{figure}

\subsubsection{Quandle equations for $\Qu$}\label{sec:9_29Qu}
Let us begin with  $a_1=\qvec{1}{0}$, $a_5=\qvec{0}{u}$ and $a_8=\qvec{a}{b}$ with indeterminate variables $u$, $a$, $b$.
Then, each arc-coloring is computed successively along $\cc_1, \cc_5, \cc_4, \cc_3, \cc_7,\cc_9$  as follows,

\begin{equation}\label{eq:9_29coloring}\aligned
	a_2&\overset{\cc_1}{=}a_1+\lrbar{ a_1, a_5 } a_5=\begin{pmatrix} 1 \\ u^2 \end{pmatrix},\\ 
	a_6&\overset{\cc_5}{=}a_5-\lrbar {a_5, a_8} a_8=\begin{pmatrix} a^2 u\\ a b u+u \end{pmatrix},\\ a_4&\overset{\cc_4}{=}a_5-\lrbar {a_5, a_2} a_2=\begin{pmatrix} u\\ u^3+u \end{pmatrix},\\
	a_3&\overset{\cc_3}{=}a_4-\lrbar {a_4, a_6} a_6=\begin{pmatrix} a^4 u^5+a^4 u^3-a^3 b u^3-a^2 u^3+u \\ a^3 b u^5+a^3 b u^3-a^2 b^2 u^3+a^2 u^5+a^2 u^3-2 a b u^3+u \end{pmatrix},\\
	a_7&\overset{\cc_7}{=}a_8+\lrbar {a_8, a_1} a_1=\begin{pmatrix} a-b \\ b \end{pmatrix},\\
	a_9&\overset{\cc_9}{=}a_1+\lrbar {a_1, a_7} a_7=\begin{pmatrix} a b-b^2+1 \\ b^2 \end{pmatrix}.\\
	\endaligned
\end{equation}
Note that there are three relations at $\cc_2$, $\cc_6$, and $\cc_8$ that have not been used yet. 
We put two of them as follows,
$$\aligned
a_3 - (a_2 - \lrbar {a_2, a_9 } a_9) &= \qvec{h_1(u,a,b)}{h_2(u,a,b)}=0,\\
a_7 - (a_6 + \lrbar {a_6, a_3 } a_3) &= \qvec{h_3(u,a,b)}{h_4(u,a,b)}=0.\endaligned
$$

\subsubsection{Gro\"ebner basis}
We obtained four equations $h_1,h_2,h_3$ and $h_4$ $\in \Qbb[u,a,b]$ for $\Qu$. By using any mathematics software, we have a Gro\"ebner basis as the following form,
\begin{equation}\label{eq:9_29}
	\{ g(u),a-f(u),b-h(u) \},	
\end{equation}
where $g(u)=g_1(u)g_2(u)g_3(u)$ and
\begin{align*}
	g_1(u)&=u-1, \\	
	g_2(u)&=u^{10}-u^9+4 u^8-7 u^7+8 u^6-14 u^5+11 u^4-10 u^3+7 u^2-2 u-1,\\
	g_3(u)&=u^{16}+3 u^{15}+10 u^{14}+22 u^{13}+43 u^{12}+73 u^{11}+101 u^{10}+129 u^9+136 u^8\\
	&\,\,\,\,\, +124 u^7+100 u^6+60 u^5+32 u^4+12 u^3+2 u^2+2 u+1,
\end{align*}
and $f(u)$ and $g(u)$ are large polynomials of degree $26$ in $\Qbb[u]$. (For example, the leading coefficient and the constant term of $f(u)$ are $15822968441895089$ and $80779430690551789$.)
Now we can see that the decomposition of $\Xpar$ is determined by factoring $g(u)$ and consists of three irreducible components $\Xpar^i$ represented by $g_i(u)$ for $i=1,2,3$.

\subsubsection{Quandle equations for $\Qv$}
For $\Qv$, we also have the system of equations
by the similar procedure with the initials $a_1=\qvec{1}{0}$, $a_5=\qvec{v}{0}$ and $a_8=\qvec{a}{b}$,  and compute the Gro\"ebner basis as well. We can check that  there are only abelian representations in $\Qv$.

\subsubsection{Obstruction classes}
The last unused equation at $\cc_8$ should be satisfied up to sign, which produces the obstruction class as follows.
$$\pm a_9 =a_8- \lrbar {a_8, a_4} a_4.$$

We can test  which sign holds for each $\Xpar^i$ by putting $u,a,b$ of (\ref{eq:9_29}). The single  representation in $\Xpar^1$ is of the only positive obstruction and the other $26$ representations in $\Xpar^2$ and $\Xpar^3$ are of negative obstruction.
Remark that  the $g(u)$ of (\ref{eq:9_29}) is the $\u$-polynomial at $\cc_1$ with sign-type $(1, 1, 1, 1, 1, 1, 1, -1, 1)$.

\subsubsection{Riley polynomials}

By Lemma \ref{lem:lambdaalpha}, we obtain $y$-values as follows. 

$$
\begin{aligned}
	\yv_{c_1}=\yv_{c_4}=&~ u^2,\\
	\yv_{c_2}=&~ (a b u^2-b^2 u^2- b^2+u^2)^2,\\
	\yv_{c_3}=\yv_{c_6}=&~ u^4(a^2 u^2+a^2-a b-1)^2,\\
	\yv_{c_5}=&~ a^2 u^2  \\
	\yv_{c_7}=\yv_{c_9}=&~  b^2,\\
	\yv_{c_8}=&~ u^2(a u^2 +a -b)^2.  \\
\end{aligned} 
$$

Since a Riley polynomial $R_{\cc_i}(y)$ at $\cc_i$ is the polynomial whose roots are the above $y$-value $y_{\cc_i}$, each $R_{\cc_i}(y)$ is determined as follows,
\begin{align*}
	\Riley_{c_1}=\Riley_{c_3}=\Riley_{c_4}=\Riley_{c_6}=&~ (y-1)(y^{10}+7y^9+18y^8+9y^7-50y^6-110y^5-83y^4-18y^3-13y^2\\&\phantom{x}-18y+1)(y^{16}+11y^{15}+54y^{14}+140y^{13}+155y^{12}-143y^{11}-689y^{10}\\&\phantom{x}-741y^9+198y^8+1160y^7+926y^6+54y^5-240y^4-56y^3+20y^2+1)\\
	\Riley_{c_2}=\Riley_{c_5} =&~ \frac {1}{16}(4y-1)(4y^{10}-25y^9+100y^8-198y^7+245y^6-161y^5+29y^4+28y^3\\&\phantom{x}-14y^2-y+1)(y^{16}-9y^{15}+66y^{14}-324y^{13}+1319y^{12}-4279y^{11}\\&\phantom{x}+11631y^{10}-26121y^9+49082y^8-76624y^7+99630y^6-107334y^5\\&\phantom{x}+95488y^4-68368y^3+37044y^2-13044y+2209)\\
	\Riley_{c_7}=\Riley_{c_9} =&~ (y-1)(y^{10}-8y^9+26y^8-17y^7-79y^6+123y^5+236y^4-598y^3\\&\phantom{x}+349y^2-65y+16)(y^8-7y^7+19y^6-22y^5+3y^4+14y^3-6y^2\\&\phantom{x}-4y+1)^2\\
	\Riley_{c_8} =&~ y(y^{10}-3y^9+43y^8-126y^7+731y^6-978y^5+2754y^4+1048y^3-639y^2\\&\phantom{x}-244y+64)(y^8-3y^7+7y^6-10y^5+11y^4-10y^3+6y^2-4y+1)^2.
\end{align*}
Note that the factorization of Riley polynomial exactly corresponds to the decomposition of $\Xpar$ and the Riley polynomials for $\Xpar^3$ has multiple roots at $\cc_7$, $\cc_8$, $\cc_9$.

\subsubsection{$u$-polynomials}
First, we compute $u$-values from the arc-colorings of (\ref{eq:9_29coloring}),
$$
\begin{aligned}
	\u_{c_1}=&~ u,\\
	\u_{c_2}=&~ -a b u^2+b^2 u^2+b^2-u^2,  \\
	\u_{c_3}=&~ u^2 - a^2 u^2 + a b u^2 - a^2 u^4,\\
	\u_{c_4}=&-u, \\
	\u_{c_5}=&~-a u, \\
	\u_{c_6}=&-u^2 + a^2 u^2 - a b u^2 + a^2 u^4 ,\\
	\u_{c_7}=&~ -b,\\
	\u_{c_8}=&~ a u - b u + a u^3,\\
	\u_{c_9}=&~ b.
\end{aligned}
$$
Recall that these $\u$-values   has the sign-type of  $(1, 1, 1, 1, 1, 1, 1, -1, 1)$. So  we change the $u$-values to be with the canonical sign-types at each crossing by using the formula of Theorem \ref{thm:signchaging}. Be careful that the transformation formulas into the canonical sign-type  are different along the obstruction classes.
After straightforward computation, we derive canonical $u$-polynomials as follows,
\begin{align*}
	g_{\cc_1}=g_{\cc_3}=&~
	(-1 + u) (-1 + 2 u + 7 u^2 + 10 u^3 + 11 u^4 + 14 u^5 + 8 u^6 + 
   7 u^7 + 4 u^8 + u^9 + u^{10})\\ &~~~~ (1 - 2 u + 2 u^2 - 12 u^3 + 32 u^4 - 
   60 u^5 + 100 u^6 - 124 u^7 + 136 u^8 - 129 u^9 + 101 u^{10}\\&~~~~ - 
   73 u^{11} + 43 u^{12} - 22 u^{13} + 10 u^{14} - 3 u^{15} + u^{16})\\
   g_{\cc_4}=g_{\cc_6}=&~
   (1 + u) (-1 + 2 u + 7 u^2 + 10 u^3 + 11 u^4 + 14 u^5 + 8 u^6 + 
   7 u^7 + 4 u^8 + u^9 + u^{10})\\
   &~~~~ (1 - 2 u + 2 u^2 - 12 u^3 + 32 u^4 - 
   60 u^5 + 100 u^6 - 124 u^7 + 136 u^8 - 129 u^9 + 101 u^{10} \\ &~~~~- 
   73 u^{11} + 43 u^{12} - 22 u^{13} + 10 u^{14} - 3 u^{15} + u^{16})\\
   g_{\cc_2}=&\frac{1}{4} (-1 + 2 u) (1 - u - 2 u^3 - 5 u^4 + 7 u^5 + 9 u^6 - 8 u^7 - 
   4 u^8 + 3 u^9 + 2 u^{10})\\
   &~~~~ (47 + 136 u + 58 u^2 - 110 u^3 + 40 u^4 + 
   226 u^5 + 6 u^6 - 146 u^7 + 40 u^8 + 101 u^9 - 19 u^{10}\\ &~~~~ - 33 u^{11} + 
   15 u^{12} + 10 u^{13} - 4 u^{14} - u^{15} + u^{16})\\
   g_{\cc_5}=&\frac{1}{4} (1 + 2 u) (1 - u - 2 u^3 - 5 u^4 + 7 u^5 + 9 u^6 - 8 u^7 - 
   4 u^8 + 3 u^9 + 2 u^{10})\\
   &~~~~ (47 + 136 u + 58 u^2 - 110 u^3 + 40 u^4 + 
   226 u^5 + 6 u^6 - 146 u^7 + 40 u^8 + 101 u^9 - 19 u^{10} \\ &~~~~- 33 u^{11} + 
   15 u^{12} + 10 u^{13} - 4 u^{14} - u^{15} + u^{16})\\
   g_{\cc_7}=&~ (1 + u) (-1 + 2 u + 3 u^4 - 2 u^5 - 3 u^6 + u^7 + u^8)^2 (-4 - 11 u - 
   7 u^2 + 18 u^3 + 12 u^4 - 3 u^5 \\&~~~~+ 5 u^6 + u^7 - 4 u^8 + u^{10})\\
   g_{\cc_9}=&~(-1 + u) (-1 + 2 u + 3 u^4 - 2 u^5 - 3 u^6 + u^7 + u^8)^2 (-4 - 
   11 u - 7 u^2 + 18 u^3 + 12 u^4 \\&~~~~- 3 u^5 + 5 u^6 + u^7 - 4 u^8 + 
   u^{10})\\
   g_{\cc_8}=&~u (-1 - 2 u + 2 u^3 + u^4 - 2 u^5 - u^6 + u^7 + u^8)^2 (8 - 30 u + 
   41 u^2 - 60 u^3 + 80 u^4 - 30 u^5\\&~~~~ - 7 u^6 + 8 u^7 + 3 u^8 - 3 u^9 +
    u^{10}).
\end{align*}

Note that these canonical $\u$-polynomials can be changed for the reflection of the diagram, while Riley polynomials are always preserved under the reflection. Such a phenomenon can also be in the $8_{18}$ case in Section \ref{sec:8_18}.

\subsubsection{Complex volumes and Cusp shapes}
The complex volumes  and  cusp shapes are  as in Figure \ref{table:9_29vol}.
\begin{figure}
	$$\begin{array}{|c|c|c|r|r|}
		\hline
		\multicolumn{2}{|c|}{\Xirr} &
		\lambda& \ii (\vol + \ii \cs)\rule{1.5em}{0em} & \text{Cusp shape}\rule{2em}{0em}\\ 
		\thickhline

		\Xpar^1&  \begin{aligned}
			u ~&=&\hspace{-2ex} -1  \rule{0em}{1.1em} \\
		\end{aligned} & +&
		\begin{aligned}
			0 \hspace{11.5ex} \rule{0em}{1.1em}
		\end{aligned}&
		\begin{aligned}
			-9/4  \hspace{11.5ex}\rule{0em}{1.1em}
		\end{aligned}\\
		\hline
		
		\begin{aligned}\Xpar^2 \\ {\scriptscriptstyle (geom)}
		\end{aligned}	
		 &	\begin{aligned}
			u ~&=&\hspace{-2ex} -0.504 &-1.408  \rule{0em}{1.1em}
			 \ii\\
			u ~&=&\hspace{-2ex} -0.504 &+1.408 \ii\\
			u ~&=&\hspace{-2ex} -0.297 &-1.222 \ii\\
			u ~&=&\hspace{-2ex} -0.297 &+1.222 \ii\\
			u ~&=&\hspace{-2ex} -0.231 &  \\
			u ~&=&\hspace{-2ex} 0.090 &-1.266 \ii\\
			u ~&=&\hspace{-2ex} 0.090 &+1.266 \ii\\
			u ~&=&\hspace{-2ex} 0.643 &-0.378 \ii\\
			u ~&=&\hspace{-2ex} 0.643 &+0.378 \ii\\
			u ~&=&\hspace{-2ex} 1.367
			&  
		\end{aligned} &-&
		\begin{aligned}
			10.4508  & -12.2059 \ii \rule{0em}{1.1em}\\
			10.4508  & +12.2059 \ii\\
			4.7313  & -5.9624 \ii\\
			4.7313  & +5.9624 \ii\\
			1.2631 & \\
			8.9245  & +2.3689 \ii\\
			8.9245  & -2.3689 \ii\\
			-1.1801  & +1.0383 \ii\\
			-1.1801  & -1.0383 \ii\\
			0.5871 & 
		\end{aligned} &
		\begin{aligned}
			7.0577 &+6.5891 \ii \rule{0em}{1.1em} \\
			7.0577 &-6.5891 \ii\\
			6.5576 &+6.4524 \ii\\
			6.5576 &-6.4524 \ii\\
			9.0988 &\\
			11.5357 &-2.9643 \ii\\
			11.5357 &+2.9643 \ii\\
			-4.7369 &-3.7117 \ii\\
			-4.7369 &+3.7117 \ii\\
			12.3229 &
		\end{aligned}\\
		\hline
		
		\begin{aligned}
			\Xpar^3 
		\end{aligned}
		& \begin{aligned}
		u ~&=&\hspace{-2ex} -1.142& -0.105 \ii \rule{0em}{1.1em}\\
		u ~&=&\hspace{-2ex}-1.142 & +0.105 \ii\\
		u ~&=&\hspace{-2ex}-0.603 & -1.446 \ii\\
		u ~&=&\hspace{-2ex}-0.603 & +1.446 \ii\\
		u ~&=&\hspace{-2ex}-0.597 & -0.027 \ii\\
		u ~&=&\hspace{-2ex}-0.597 & +0.027 \ii\\
		u ~&=&\hspace{-2ex}-0.182 & -1.049 \ii\\
		u ~&=&\hspace{-2ex}-0.182 & +1.049 \ii\\
		u ~&=&\hspace{-2ex}-0.073 & -1.153 \ii\\
		u ~&=&\hspace{-2ex}-0.073 & +1.153 \ii\\
		u ~&=&\hspace{-2ex}0.281 & - 0.319 \ii\\
		u ~&=&\hspace{-2ex}0.281 & + 0.319 \ii\\
		u ~&=&\hspace{-2ex}0.309 & - 1.112 \ii\\
		u ~&=&\hspace{-2ex}0.309 & + 1.112 \ii\\
		u ~&=&\hspace{-2ex}0.507 & - 1.457 \ii\\
		u ~&=&\hspace{-2ex}0.507 & + 1.457 \ii

		\end{aligned} &-&
		\begin{aligned}
			5.6695 & -6.4435 \ii \rule{0em}{1.1em}\\
			5.6695 & +6.4435 \ii\\
			9.7926 & \\
			9.7926 & \\
			1.1305 & +2.5785 \ii\\
			1.1305 & -2.5785 \ii\\
			4.1349 & \\
			4.1349 & \\
			4.3305 & -1.1312 \ii\\
			4.3305 & +1.1312 \ii\\
			4.3305 & +1.1312 \ii\\
			4.3305 & -1.1312 \ii\\
			1.1305 & +2.5785 \ii\\
			1.1305 & -2.5785 \ii \\
			5.6695 & +6.4435 \ii\\
			5.6695 & -6.4435 \ii
		\end{aligned}
		&
		\begin{aligned}
			5.4284 &+5.2942 \ii  \rule{0em}{1.1em}\\
			5.4284 &-5.2942 \ii\\
			9.8640 &\\
			9.8640 &\\
			0.2771 &-3.5680 \ii\\
			0.2771 &+3.5680 \ii\\
			7.8945 &\\
			7.8945 &\\
			3.4152 &+0.5108 \ii\\
			3.4152 &-0.5108 \ii\\
			3.4152 &-0.5108 \ii\\
			3.4152 &+0.5108 \ii\\
			0.2771 &-3.5680 \ii\\
			0.2771 &+3.5680 \ii\\
			5.4284 &-5.2942 \ii\\
			5.4284 &+5.2942 \ii
		\end{aligned}\\
		\hline
		
		\hline

	\end{array}
	$$
	\caption{Complex volume and Cusp shape for each representation $\rho\in\Xpar(9_{29})$, where $\lambda$ is the obstruction class.}
	\label{table:9_29vol}
\end{figure}
We remark that there is a missing representation of positive obstruction in the CURVE project \cite{curveproject}.

\subsection{Computations for links: {$5_1^2$} Whitehead link}\label{sec:whiteheadlink}

Essentially,  the link cases are similar to the knot case, but we need some modification for the theory. 
The parabolic quandle map $\M$  without sign-type in Proposition \ref{prop:surjM} is  true  for any link diagram and 
the formula computing obstruction class in Theorem \ref{thm:obs} is also valid if one considers the longitude and the total-sign for each link component.
But,  $\Xpar \neq \bRnt$ for link cases  and the parabolic quandle system $\Qe$ with sign-type in Section \ref{sec:paraquandlesigntype} is not bijective to $\Xpar$ nor $\bRnt$. In particular,  the splitting property of Riley polynomials fails for link cases.
Moreover, even if one fixes a sign-type at each crossing, the sign of arc-colorings $\pm(a_1,\dots,a_{N_1})$ and $\pm(b_1,\dots,b_{N_2}) $  can be chosen arbitrarily where $a_i$'s and $b_j$'s are the arcs in different link components. So we can not define the sign of $u$-value $u_\ij=\lrbar{a_i,b_j}$ when the indices $i$ and $j$ are in the  different  components.  
We will see these things in the computation of Whitehead link in Section \ref{sec:whiteheadlink}.

Let us consider a link diagram of the Whitehead link  and label each arc and crossing by $\aa_i$ and $\cc_i$ respectively as in Figure \ref{fig:5_1^2fig}. 

\begin{figure}[!ht]
	\begin{center}
		\includegraphics{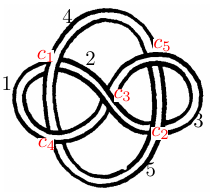}	\end{center}
	\caption{Labeling the diagram of $5_1^2$ knot.}\label{fig:5_1^2fig}
\end{figure}

\subsubsection{Quandle equations for $\Qu$}
Let us begin with  $a_1=\qvec{1}{0}$ and $a_4=\qvec{0}{u}$. Then each arc-coloring is computed successively along $ \cc_1, \cc_4, \cc_2$  as follows,
\begin{equation}\label{whiteheadlink}\aligned
	a_2&=a_1+\langle a_1, a_4 \rangle a_4=\begin{pmatrix} 1 \\ u^2 \end{pmatrix},\\ a_5&=a_4+\langle a_4, a_1 \rangle a_1=\begin{pmatrix} -u \\ u \end{pmatrix},\\ a_3&=a_2-\langle a_2, a_5 \rangle a_5=\begin{pmatrix} u^4+u^2+1 \\ -u^4 \end{pmatrix}.\\
	\endaligned
\end{equation}

Note that there are two relations at $c_3$ and $c_5$ that have not been used yet.

$$\aligned a_1 \pm  (a_3 - \langle a_3, a_2 \rangle a_2) = 0,\\
a_4 \pm  (a_5 - \langle a_5, a_3 \rangle a_3) = 0.
\endaligned $$
The Whitehead link has two link components and  there are 4 kinds of total signs $(-,-)$, $(-,+)$, $(+,-)$ and $(+,+)$ along the choice of sign for each link component, which are determined by  the signs of the trace of longitude for the associate representation.  

By straightforward computation,  the  non-abelian representations are only in  the $(-,-)$ case and the system of equation is equivalent to  
$g(u)=u^4+2u^2+2=0,$ which has four roots $\pm \frac{\sqrt{2\sqrt{2}-2}}{2}\pm \frac{\sqrt{2\sqrt{2}+2}}{2}\ii$.
Note that the correspondence from this normalized quandle solution in $\Qu$ to $\bRpar$ is not bijective but  $2$-to-$1$, and hence two roots of $u_0$ and $-u_0$ of $g(u)=0$ 
gives the same representation by the parabolic quandle map in Proposition \ref{prop:surjM}. 
In summary, we can conclude that there are two irreducible representations up to conjugate.

\subsubsection{Quandle equations for $\Qv$}
As putting  $a_1=\qvec{1}{0}$ and $a_4=\qvec{v}{0}$, and solving the equations for $\Qv$ in a similar way to $Q_u$, one can check that there is a 1-dimensional solution of abelian representation:
$$a_1=a_2=a_3=\qvec{1}{0},~~ a_4=a_5=\qvec{v}{0}.$$

We can see that $\overline{\Rpar^{ab}}$ is 1-dimensional, but $\Xpar^{ab}$ is 0-dimensional and  a singleton set of constant trace $2$ as in the case of knot.

\subsubsection{Abelian representations for $5_1^2$}
For link cases, the abelian representations are more complicated than knot cases.
For example, 
the $\overline{\Rpar^{ab}} (5_1^2)$ has 4 abelian (three 0-dimensional and one 1-dimensional) representation classes up to conjugation as follows,
$$
\begin{aligned}
\bRab =&\{(\bm\wg_1,\bm\wg_4) \mid  \bm\wg_1=\bm\wg_4=\id; ~\bm\wg_1=\id, \bm\wg_4=\sm{1&1\\0&1}; \\
&\bm\wg_1=\sm{1&1\\0&1}, \bm\wg_4=\id; ~\bm\wg_1=\sm{1&1\\0&1},\bm\wg_4=\sm{1&v^2\\0&1},\text{ where }v\ne 0 \},
\end{aligned}
$$
where the other $\bm\wg_2$, $\bm\wg_3$, $\bm\wg_5$ are expressed by regular functions of $\bm\wg_1$ and $\bm\wg_4$.
If one considers only the case of non-trivial meridian, i.e., $\rho(\bm\wg_i)\neq \sm{1&0\\0&1}$ for any $i$. There is only one component of dimension $1$.


\subsubsection{Riley polynomials}

By the formula $\yv_\ij=\lrbar{\aa_i,\aa_j}^2$,
the $y$-values are 
$$
\begin{aligned}
	\yv(c_1)=\yv(c_4)=&~ u^2,\\
	\yv(c_2)=\yv(c_5)=&~ -u^2,\\
	\yv(c_3)=&~ -2u^2-2, \\
\end{aligned}
$$
and  we obtain Riley polynomials as follows
\begin{align*}
	R_{\cc_1}=R_{\cc_4}=&~ y^2+2y+2,\\
	R_{\cc_2}=R_{\cc_5}=&~ y^2-2y+2,\\
	R_{\cc_3} =&~ y^2+4.
\end{align*}

As we mentioned 
in Remark \ref{rmk:linkhomo} that the Riley polynomials for link crossings depend on the choice of the orientation.
If we reverse the orientation of the figure-8 component and preserve the orientation of the figure-0 component in Figure \ref{fig:5_1^2fig}, then the Riley polynomials at $\cc_1, \cc_4$ and $\cc_2, \cc_5$ are transformed  each other as $y^2+2y+2$ $\longleftrightarrow$ $y^2-2y+2$, which is from  $\langle \ii a_i, a_j \rangle^2=-\langle a_i, a_j \rangle^2$ by (\ref{eqn:reverse}) in Section \ref{sec:symquan}. 
If we apply a $\pi$-rotation of the link diagram in Figure \ref{fig:5_1^2fig}, then the orientation of the figure-8 component is reversed  and we can check that Theorem \ref{thm:homomorphism} holds for such a $\pi$-rotation symmetry.

Note that the square-splitting property is only satisfied for the Riley polynomial $R_{\cc_3}$ which is the self-crossing in the diagram. 
The canonical $u$-polynomial is also only defined at the self-crossing $\cc_3$. 
The canonical sign-type is $\e_3=-1$ at the only self-crossing. The corresponding $u$-values $u_{\cc_3}=\lrbar{\aa_3,\aa_2}$ are $1\pm\ii$ and the canonical $u$-polynomial is as follows,
 $$g_{\cc_3}(u)=(u-1-\ii)(u-1+\ii)=u^2-2u+2.$$

 \subsubsection{Complex volumes and Cusp shapes}
 The complex volumes  and  cusp shapes are  as follows. 
 \begin{figure}[H]
	 $$\begin{array}{|c|c|c|c|}
		 \hline
		  g(u)=0 & \ii (\vol + \ii \cs)\rule{1.5em}{0em} & \text{Cusp shape of }L_1 & \hspace{1em}\text{Cusp shape of }L_2 \rule{2em}{0em}\\ 
		 \thickhline
		 \begin{aligned}
			 u^2 ~&=&\hspace{-2ex} -1 &- \ii \rule{0em}{1.1em} \\
			 u^2 ~&=&\hspace{-2ex} -1  &+  \ii
		 \end{aligned} &
		 \begin{aligned}
			2.4674 &+ 3.6638 \ii  \rule{0em}{1.1em}\\
			2.4674 &- 3.6638  \ii
		 \end{aligned} &
		 \begin{aligned}
			2 &- 2 \ii  \rule{0em}{1.1em}\\
			2 &+ 2 \ii
		 \end{aligned}&
		 \begin{aligned}
			2 &- 2 \ii  \rule{0em}{1.1em}\\
			2 &+ 2 \ii
		\end{aligned}\\
		 \hline
	 \end{array}
	 $$
	 \label{table:5_1^2}
 \end{figure}
 Remark that, different from knot cases, two conjugacy classes of representations in $\Xpar$ are indicated by the square of the root of $g(u)$. There are two longitudes of $L_1$ and $L_2$ where $L_1$ is the figure-8 component and $L_2$ is the figure-0 component in Figure \ref{fig:5_1^2fig}. We can check that the Chern-Simons invariant is $\pi^2/4$ numerically, but, usually, it is not easy to prove rigorously what the exact value is. 

\section{Computation of $\Xpar(8_{18})$ and diagrammatic Symmetry}\label{sec:8_18}
The $8_{18}$ is a very symmetrical knot. The structure of $\Xpar(8_{18})$ is exceptionally complicated among the knots with a small number of crossings. In this section, we compute $\Xpar$ and study the behavior for the diagrammatic symmetries. 

\subsection{Computation}
Let us consider a knot diagram of $8_{18}$ and label each arc and crossing by $a_i$ and $\cc_i$ as in Figure \ref{fig:8_18fig}.

\begin{figure}[H]
	\begin{center}
		\includegraphics{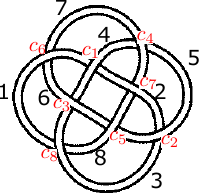}
	\end{center}
	\caption{A diagram of $8_{18}$ knot in Rolfsen table.}\label{fig:8_18fig}
\end{figure}
\subsubsection{Quandle equations for $\Qu$}\label{sec:8_18Qu}
Let us begin with  $a_1=\qvec{1}{0}$, $a_4=\qvec{0}{u}$ and $a_6=\qvec{a}{b}$ with indeterminate variables $u$, $a$, $b$.
Then, each arc-coloring is computed successively along $\cc_1, \cc_6, \cc_7, \cc_4, \cc_2$  as follows,
\begin{equation*}\label{eq:8_18coloring}\aligned
	a_2&\overset{\cc_1}{=}a_1-\lrbar{ a_1, a_4 } a_4=\begin{pmatrix} 1 \\ -u^2 \end{pmatrix},\\ 
	a_7&\overset{\cc_6}{=}a_6+\lrbar {a_6, a_1} a_1=\begin{pmatrix} a-b \\ b \end{pmatrix},\\ 
	a_8&\overset{\cc_7}{=}a_7-\lrbar {a_7, a_2} a_2=\begin{pmatrix} a + a u^2 - b u^2 \\ b - b u^2 - a u^4 + b u^4 \end{pmatrix},\\
	a_5&\overset{\cc_4}{=}a_4+\lrbar {a_4, a_7} a_7=\begin{pmatrix} -a^2 u + 2 a b u - b^2 u\\ u - a b u + b^2 u\end{pmatrix},\\
	a_3&\overset{\cc_2}{=}a_2+\lrbar {a_2, a_5} a_5=\begin{pmatrix*}
		\begin{aligned}
			1 - a^2 u^2 + 2 a b u^2 + a^3 b u^2 - b^2 u^2 - 3 a^2 b^2 u^2 + 
			3 a b^3 u^2 \\- b^4 u^2 + a^4 u^4 - 4 a^3 b u^4 + 6 a^2 b^2 u^4 - 
			4 a b^3 u^4 + b^4 u^4
		\end{aligned} \\[2.5ex]
		\begin{aligned}
			-2 a b u^2 + 2 b^2 u^2 + a^2 b^2 u^2 - 
			2 a b^3 u^2 + b^4 u^2 - a^2 u^4\\ + 2 a b u^4 + a^3 b u^4 - b^2 u^4 - 
			3 a^2 b^2 u^4 + 3 a b^3 u^4 - b^4 u^4
		\end{aligned}
	\end{pmatrix*}.\\
	\endaligned
\end{equation*}

When we obtain all colorings, there still remains three unused relations at the crossings $\cc_3, \cc_5$, and $\cc_8$. 
We put two of them as follows,
\begin{equation}
	\begin{aligned}
		a_4&-(a_3- \lrbar{ a_3, a_6} a_6) = \begin{pmatrix*}
			h_1(u,a,b)\\
			h_2(u,a,b)
		\end{pmatrix*} \text{  at $\cc_3$},\\
		a_6&-(a_5- \lrbar{ a_5, a_8} a_8)=\begin{pmatrix*}
			h_3(u,a,b)\\
			h_4(u,a,b)
		\end{pmatrix*} \text{  at $\cc_5$},
	\end{aligned}	
\end{equation}

and the last equation at $\cc_8$ should be satisfied up to sign, which produces the obstruction class.
\begin{align}
	\pm a_1 =a_8+ \langle a_8, a_3 \rangle a_3, \text{ at $c_8$}.
\end{align}

\subsubsection{Primary decomposition}
We obtained four equations $h_1$, $h_2$, $h_3$, and $h_4$ $\in \Qbb[u,a,b]$ for $\Qu$ and apply primary decomposition algorithms of mathematical softwares\footnote{We  tested Magma, Maple, Singular and Macaulay2 to do the primary decomposition and compared them. All the results are the same. }.
We also have the system of equation for $\Qv$
by the similar procedure with the initials $a_1=\qvec{1}{0}$, $a_4=\qvec{v}{0}$ and $a_6=\qvec{a}{b}$,  and compute the primary decomposition as well.

The irreducible component $\Xpar^2=\left<-1 + v, -1 + b \right>$ (see Figure \ref{table:8_18Xpar}) looks  1-dimensional with coloring of 
$$\aligned 
a_1=a_2=a_4=\begin{pmatrix} 1\\ 0 \end{pmatrix},
a_3=\begin{pmatrix} 1+ a\\  1 \end{pmatrix},
a_5=a_6=a_8=\begin{pmatrix} a\\ 1 \end{pmatrix},
a_7=\begin{pmatrix} a - 1\\ 1 \end{pmatrix}. \endaligned$$ 
Then by multiplication $\begin{pmatrix} 1&-a\\ 0&1 \end{pmatrix} a_i$, we get the following modified solution up to conjugate to the above one: 
$$\aligned 
a_1=a_2=a_4=\begin{pmatrix} 1\\ 0 \end{pmatrix},
a_3=\begin{pmatrix} 1\\  1 \end{pmatrix},
a_5=a_6=a_8=\begin{pmatrix} 0\\ 1 \end{pmatrix},
a_7=\begin{pmatrix}  - 1\\ 1 \end{pmatrix}. \endaligned$$ 
Hence $\Xpar^2$ becomes 0-dimensional representation.

The results are summarized in Figure \ref{table:8_18Xpar}. 
$\Qu$ has 9 irreducible factors where one has  positive obstruction and the other remaining 8 cases have negative obstructions.  $\Qv$ has 2 irreducible factors where  the one is for abelian representations of $\Xpar$ with positive obstruction and the other one $\Xpar^2$ has negative obstruction. 
We index each irreducible component of $\Xpar$ as in Figure \ref{table:8_18Xpar}. 
Remark that  the generating set of $\Xpar^{i}$ can be  differently  expressed. For example,  $\Xpar^7$ is determined by the generating set of  $$\left<b^4 - b^3 + 2b + 1, b^3 - 2b^2 + b + u + 2, 2b^3 - 3b^2 + a + b + 3\right>.$$

\begin{figure}[H]
	\[ 
	\def\arraystretch{1.3}
	\begin{array}{|c|c|c|c|c|}
		\hline
		\multicolumn{2}{|c|}{\text{Decomposition}}  & \text{$\lambda$}& 
		\text{Generating set} &\\
		\thickhline
		\multicolumn{2}{|c|}{\Xab}  & + & \left< v-1,a-1,b \right>&\Qv\\
		\hline
		\multirow{10}{*}{$\Xpar$} & \Xpar^1&+ &\left<{1 + u + u^2, 1 + b + u, 1 + a} \right>&\Qu\\
		\cline{2-5}
		& \Xpar^2&- &\left<-1 + v, -1 + b \right>&\Qv\\
		\cline{2-5}
		& \Xpar^3&- &\left<-1 + u, -1 + b, a \right>&\Qu\\
		\cline{2-5}
		& \Xpar^4&- &\left<1 + u, b, 1 + a \right>&\Qu\\
		\cline{2-5}
		& \Xpar^5&- &\left< {1 + u, 1 + b, 1 + a} \right>&\Qu\\
		\cline{2-5}
		& \Xpar^6&- &
		\rule{0em}{4ex}
		\begin{aligned}
			&\left<-1 + 2 u + u^2 - 2 u^3 + u^4,\right.  \\[-1ex] &~~\left. -1 + b + u^2 - u^3, 1 + a \right>
		\end{aligned} &\Qu\\
		\cline{2-5}
		& \Xpar^7&- &\left<{1 + u + u^2, -1 - b + b^2 - u - b u, a - u + b u} \right>&\Qu\\
		\cline{2-5}
		& \Xpar^8&- &\rule{0em}{4ex}\begin{aligned}
			&\left< 3 - 6 u + 6 u^2 - 3 u^3 + u^4,\right.  \\[-1ex] &~~\left. -4 + b + 5 u - 3 u^2 + u^3, -3 + 
			3 a - u^3 \right>
		\end{aligned}
		&\Qu\\
		\cline{2-5}
		& \Xpar^9&- & \rule{0em}{4ex}
		\begin{aligned}
			&\left<1 - 2 u + u^3 + u^4,\right.  \\[-1ex] &~~\left. -2 + b + u + 2 u^2 + u^3,  a - 2 u - 2 u^2 - u^3 \right>
		\end{aligned}&\Qu\\
		\cline{2-5}	
		& \Xpar^{10}&- &\rule{0em}{4ex}\begin{aligned}
			&\left<1 - 2 u + u^3 + u^4,\right.  \\[-1ex] &~~\left. -2 + b + 2 u + 3 u^2 + 2 u^3, 1 + a \right>
		\end{aligned}		
		&\Qu\\
		\hline
	\end{array}
	\]
	\caption{Primary decomposition of $\Xpar(8_{18})$ where $\lambda$ is the obstruction class.}
	\label{table:8_18Xpar}
\end{figure}

\subsubsection{Riley polynomials}
In a similar way to the previous examples, we calculate the $y$-value at each crossing. We, however, do not explicitly write down them here  since the expression itself is not essential and depends on the choice of initial arcs with indeterminate variables $u$, $a$, $b$, and $v$.  
As we compute Riley polynomials from the $y$-values we observe that
all Riley polynomials are the same, i.e.,
\[
\begin{aligned}
	R_{\cc_i}(y)= 
	y(y-1)^3(y^2+y+1)^3(y^4-2y^3+7y^2-6y+1)\\
	(y^4+3y^3+6y^2+9)(y^4-y^3+6y^2-4y+1)^2,
\end{aligned}
\]
for all $1\leq i\leq 8$.

Moreover, we compute the corresponding irreducible factors of  Riley polynomials for  each $\Xpar^i$, $1\leq i\leq 10$ in Figure \ref{table:8_18Xpar}.

\begin{figure}[H]
	\[ 
	\def\arraystretch{1.3}
	\begin{array}{|c|c|c|c|c|c|c|}
		\hline
		& \Xpar^1 &\Xpar^2&\Xpar^3&\Xpar^4&\Xpar^5&\Xpar^6\\
		\thickhline
		\cc_1 &1 + y + y^2&  y& -1+ y& -1+ y& -1+ y &1 - 6 y + 7 y^2 - 2 y^3 + y^4 \\\hline
		\cc_2 &1 + y + y^2& -1+ y& -1+ y&  y& -1+ y &1 - 6 y + 7 y^2 - 2 y^3 + y^4 \\\hline
		\cc_3 &1 + y + y^2& -1+ y&  y& -1+ y& -1+ y &1 - 6 y + 7 y^2 - 2 y^3 + y^4\\\hline
		\cc_4 &1 + y + y^2& -1+ y& -1+ y& -1+ y&  y &1 - 6 y + 7 y^2 - 2 y^3 + y^4\\\hline
		\cc_5 &1 + y + y^2&  y& -1+ y& -1+ y& -1+ y &1 - 6 y + 7 y^2 - 2 y^3 + y^4\\\hline
		\cc_6& 1 + y + y^2& -1+ y& -1+ y&  y& -1+ y &1 - 6 y + 7 y^2 - 2 y^3 + y^4\\\hline
		\cc_7 &1 + y + y^2& -1+ y&  y& -1+ y& -1+ y &1 - 6 y + 7 y^2 - 2 y^3 + y^4\\\hline
		\cc_8 &1 + y + y^2& -1+ y& -1+ y& -1+ y&  y &1 - 6 y + 7 y^2 - 2 y^3 + y^4\\\hline
	\end{array}
	\]
	\[ 
	\def\arraystretch{1.3}
	\begin{array}{|c|c|c|c|c|}
		\hline
		& \Xpar^7 &\Xpar^8&\Xpar^9&\Xpar^{10}\\
		\thickhline
		\cc_1 &\scriptstyle (1 + y + y^2)^2&\scriptstyle 9 + 6 y^2 + 3 y^3 + y^4&\scriptstyle 1 - 4 y + 
		6 y^2 - y^3 + y^4&\scriptstyle 1 - 4 y + 6 y^2 - y^3 + y^4 \\\hline
		\cc_2 &\scriptstyle1 - 4 y + 6 y^2 - y^3 + y^4&\scriptstyle 1 - 4 y + 6 y^2 - y^3 + y^4&\scriptstyle (1 + y + y^2)^2 &\scriptstyle 9 + 6 y^2 + 
		3 y^3 + y^4 \\\hline
		\cc_3 &\scriptstyle9 + 6 y^2 + 3 y^3 + y^4&\scriptstyle (1 + y + y^2)^2&\scriptstyle 1 - 4 y + 
		6 y^2 - y^3 + y^4&\scriptstyle 1 - 4 y + 6 y^2 - y^3 + y^4\\\hline
		\cc_4 &\scriptstyle1 - 4 y + 6 y^2 - y^3 + y^4&\scriptstyle 1 - 4 y + 
		6 y^2 - y^3 + y^4&\scriptstyle 9 + 6 y^2 + 3 y^3 + y^4 &\scriptstyle (1 + y + y^2)^2\\\hline
		\cc_5 &\scriptstyle(1 + y + y^2)^2& \scriptstyle 9 + 6 y^2 + 3 y^3 + y^4&\scriptstyle 1 - 4 y + 
		6 y^2 - y^3 + y^4&\scriptstyle 1 - 4 y + 6 y^2 - y^3 + y^4\\\hline
		\cc_6&\scriptstyle 1 - 4 y + 6 y^2 - y^3 + y^4&\scriptstyle 1 - 4 y + 6 y^2 - y^3 + y^4&\scriptstyle (1 + y + y^2)^2&\scriptstyle 9 + 6 y^2 + 
		3 y^3 + y^4 \\\hline
		\cc_7 &\scriptstyle9 + 6 y^2 + 3 y^3 + y^4&\scriptstyle (1 + y + y^2)^2&\scriptstyle 1 - 4 y + 
		6 y^2 - y^3 + y^4&\scriptstyle 1 - 4 y + 6 y^2 - y^3 + y^4\\\hline
		\cc_8 &\scriptstyle1 - 4 y + 6 y^2 - y^3 + y^4&\scriptstyle 1 - 4 y + 
		6 y^2 - y^3 + y^4&\scriptstyle 9 + 6 y^2 + 3 y^3 + y^4&\scriptstyle (1 + y + y^2)^2\\\hline
	\end{array}
	\]
	\caption{Factorization of Riley polynomials along the decomposition of $\Xpar(8_{18})$.}
	\label{table:8_18Riley}
\end{figure}

\subsubsection{$u$-polynomials}
As previously computed examples, we first obtain the $\uv$-values for the sign-type  $(1, 1, 1, 1, 1, 1, 1, -1)$  used to obtain arc-colorings in Section \ref{sec:8_18Qu}. Then the canonical $u$-polynomials are obtained by transforming into the canonical sign-type at each crossing.
Although Riley polynomials are the same at all crossings, there are two kinds of canonical $u$-polynomials as follows,
\[
\begin{aligned}
	g_{\cc_i}(u)=&(-1 + u)^2 u (1 + u) (1 - u + u^2)^2 (1 + u + 
	u^2) (1 + 2 u - u^3 + u^4)^2\\&~~~ (-1 - 2 u + u^2 + 2 u^3 + 
	u^4) (3 + 6 u + 6 u^2 + 3 u^3 + u^4)
\end{aligned}
\] 
for $i=1,3,5,7$ and 
\[
\begin{aligned}
	g_{\cc_i}(u)=& 
	(-1 + u) u (1 + u)^2 (1 - u + u^2) (1 + u + 
	u^2)^2(1 - 2 u + u^3 + u^4)^2\\&~~~ (-1 + 2 u + u^2 - 2 u^3 +
	u^4)  (3 - 6 u + 6 u^2 - 3 u^3 + u^4)
\end{aligned}
\]
for $i=2,4,6,8$.

Each irreducible factor of $u$-polynomials for  each $\Xpar^i$, $1\leq i\leq 10$ in Figure \ref{table:8_18upoly}.

\begin{figure}[H]
	\[ 
	\def\arraystretch{1.3}
	\begin{array}{|c|c|c|c|c|c|c|}
		\hline
		& \Xpar^1 &\Xpar^2&\Xpar^3&\Xpar^4&\Xpar^5&\Xpar^6\\
		\thickhline
		\cc_1 &1 + u + u^2& u& 1 + u& -1 + u& -1 + u& -1 - 2 u + u^2 + 2 u^3 + u^4 \\\hline
		\cc_2 &1 - u + u^2& 1 + u& 1 + u& u& -1 + u& -1 + 2 u + u^2 - 2 u^3 + u^4 \\\hline
		\cc_3 &1 + u + u^2& 1 + u& u& -1 + u& -1 + u& -1 - 2 u + u^2 + 2 u^3 + u^4\\\hline
		\cc_4 &1 - u + u^2& 1 + u& 1 + u& -1 + u& u& -1 + 2 u + u^2 - 2 u^3 + u^4\\\hline
		\cc_5 &1 + u + u^2& u& 1 + u& -1 + u& -1 + u& -1 - 2 u + u^2 + 2 u^3 + u^4\\\hline
		\cc_6& 1 - u + u^2& 1 + u& 1 + u& u& -1 + u& -1 + 2 u + u^2 - 2 u^3 + u^4\\\hline
		\cc_7 &1 + u + u^2& 1 + u& u& -1 + u& -1 + u& -1 - 2 u + u^2 + 2 u^3 + u^4\\\hline
		\cc_8 &1 - u + u^2& 1 + u& 1 + u& -1 + u& u& -1 + 2 u + u^2 - 2 u^3 + u^4\\\hline
	\end{array}
	\]
	\[ 
	\def\arraystretch{1.3}
	\begin{array}{|c|c|c|c|c|}
		\hline
		& \Xpar^7 &\Xpar^8&\Xpar^9&\Xpar^{10}\\
		\thickhline
		\cc_1 &~\scriptstyle(1 - u + u^2)^2&~\scriptstyle 3 + 6 u + 6 u^2 + 3 u^3 + u^4&~\scriptstyle 1 + 
		2 u - u^3 + u^4&~\scriptstyle 1 + 2 u - u^3 + u^4 \\\hline
		\cc_2 &\scriptstyle1 - 2 u + u^3 + u^4&\scriptstyle 1 - 2 u + u^3 + u^4&\scriptstyle(1 + u + u^2)^2&\scriptstyle 3 - 6 u + 6 u^2 - 
		3 u^3 + u^4 \\\hline
		\cc_3 &\scriptstyle3 + 6 u + 6 u^2 + 3 u^3 + u^4&\scriptstyle (1 - u + u^2)^2&\scriptstyle 1 + 
		2 u - u^3 + u^4&\scriptstyle 1 + 2 u - u^3 + u^4 \\\hline
		\cc_4 &\scriptstyle1 - 2 u + u^3 + u^4&\scriptstyle 1 - 2 u + u^3 + u^4&\scriptstyle 3 - 6 u + 
		6 u^2 - 3 u^3 + u^4 &\scriptstyle (1 + u + u^2)^2 \\\hline
		\cc_5 &\scriptstyle(1 - u + u^2)^2&\scriptstyle 3 + 6 u + 6 u^2 + 3 u^3 + u^4&\scriptstyle 1 + 
		2 u - u^3 + u^4&\scriptstyle 1 + 2 u - u^3 + u^4 \\\hline
		\cc_6&\scriptstyle1 - 2 u + u^3 + u^4&\scriptstyle 1 - 2 u + u^3 + u^4&\scriptstyle (1 + u + u^2)^2&\scriptstyle 3 - 6 u + 6 u^2 - 
		3 u^3 + u^4 \\\hline
		\cc_7 &\scriptstyle3 + 6 u + 6 u^2 + 3 u^3 + u^4&\scriptstyle (1 - u + u^2)^2&\scriptstyle 1 + 
		2 u - u^3 + u^4&\scriptstyle 1 + 2 u - u^3 + u^4 \\\hline
		\cc_8 &\scriptstyle1 - 2 u + u^3 + u^4&\scriptstyle 1 - 2 u + u^3 + u^4&\scriptstyle 3 - 6 u + 
		6 u^2 - 3 u^3 + u^4&\scriptstyle (1 + u + u^2)^2 \\\hline
	\end{array}
	\]
	\caption{Factorization of $u$-polynomials along the decomposition of $\Xpar(8_{18})$.}
	\label{table:8_18upoly}
\end{figure}

\subsection{$\Xpar(8_{18})$ and complex volume}\label{sec:8_18str}
As far as the authors know, the complete list of parabolic representations of $8_{18}$ was not known until this paper. So we would write it down as a proposition.
\begin{prop}
	For $8_{18}$ knot, there are 26  parabolic representations. In particular, 
	there are 10 irreducible components as $\Qbb$-variety  and each component has 1, 2 or 4  parabolic representations. The complex volumes and the cusp shapes are listed in Figure \ref{table:8_18vol}.
\end{prop}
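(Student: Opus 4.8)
The plan is to realize $\Xpar(8_{18})$ explicitly through the normalization of Proposition \ref{prop:normalizeQ} together with the bijection of Theorem \ref{thm:perfect1-1}, and then to read off both the count and the geometric data by direct computation. Since $8_{18}$ is a knot, Theorem \ref{thm:perfect1-1} guarantees that $\wMe : \wQe \to \Xpar$ is a bijection, while Proposition \ref{prop:normalizeQ} splits $\wQe$ as $\Qu \sqcup \Qv$. Fixing the three independent coloring vectors $a_1 = \qvec{1}{0}$, $a_4 = \qvec{0}{u}$, $a_6 = \qvec{a}{b}$ for the $\Qu$ branch (and $a_4 = \qvec{v}{0}$ for $\Qv$) exactly as in Section \ref{sec:8_18Qu}, I would propagate the quandle relation of Definition \ref{def:signedrel} along the diagram to express every arc-coloring as an explicit polynomial in the parameters $u,a,b$ (resp. $v,a,b$). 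The three relations left unused at $\cc_3,\cc_5,\cc_8$ then furnish a polynomial system in $\Qbb[u,a,b]$, with the relation at $\cc_8$ additionally recording the obstruction sign.

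The first substantive step is to solve the two systems by primary decomposition in a computer algebra system, cross-checking the output across Magma, Maple, Singular, and Macaulay2 to guard against implementation-dependent errors; this yields the abelian locus together with the ten irreducible $\Qbb$-components $\Xpar^1,\dots,\Xpar^{10}$ of Figure \ref{table:8_18Xpar}. The second step is to count representations component by component: by the bijections above, the number of parabolic representations in a zero-dimensional component equals its degree, and reading off the generating sets gives degrees $2,1,1,1,1,4,4,4,4,4$, which sum to $26$, so each component carries $1$, $2$, or $4$ representations. Two points require care here. The $\Qv$ branch returns abelian representations that must be discarded by hand, as already noted after Proposition \ref{prop:normalizeQ}; and the component $\Xpar^2$ appears one-dimensional in the raw solution set but collapses to a single conjugacy class once the residual upper-triangular conjugation built into the definition of $\Qv$ is applied, precisely the reduction carried out in Section \ref{sec:8_18Qu}.

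For the last step, for each of the $26$ arc-colorings I would compute the associated $z$- or $w$-variables from the quandle vectors via \cite[Theorem 3.2]{cho_quandle_2018} (resp. \cite[Section 2.3]{cho_optimistic_2016}) and evaluate the complex volume and cusp shape through the diagrammatic formulas of \cite[Section 6]{kim_octahedral_2018}, thereby populating Figure \ref{table:8_18vol}. Consistency with SnapPy \cite{SnapPy} and with the CURVE project \cite{curveproject} provides an independent verification, up to the sign convention for the cusp shape and the modulus $\pi^2$ versus $\pi^2/6$ for the Chern--Simons part already discussed for $7_4$.

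The main obstacle is the completeness and correctness of the primary decomposition. Unlike gluing-equation methods, which can silently omit components, this step must be certified to return every component, so the decomposition has to be run to termination on the full ideal and confirmed across several independent algebra systems; this is exactly where the efficiency of the parabolic quandle system over the raw Wirtinger system becomes essential. A secondary but genuine subtlety is the dimension bookkeeping for $\Xpar^2$: overlooking the conjugation quotient there would spuriously inflate a one-point conjugacy class into a positive-dimensional family and spoil the total count of $26$.
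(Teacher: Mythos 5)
Your proposal follows essentially the same route as the paper: the authors establish this proposition exactly by the normalization of Proposition \ref{prop:normalizeQ} with initial vectors $a_1,a_4,a_6$, primary decomposition of the resulting ideals in $\Qbb[u,a,b]$ (cross-checked over several computer algebra systems), the degree count $2+1+1+1+1+4+4+4+4+4=26$ via the bijection of Theorem \ref{thm:perfect1-1}, the same reduction of the apparently one-dimensional $\Xpar^2$ by the residual upper-triangular conjugation, and the same diagrammatic formulas for the complex volumes and cusp shapes. The two subtleties you flag (discarding the abelian locus in $\Qv$ and the dimension bookkeeping for $\Xpar^2$) are precisely the ones the paper handles.
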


\begin{figure}
	$$\begin{array}{|c|c|c|r|r|}
		\hline
		\multicolumn{2}{|c|}{\Xirr}&\lambda & \ii (\vol + \ii \cs)\rule{1.5em}{0em} & \text{Cusp shape}\rule{2em}{0em}\\ 
		\thickhline

		\Xpar^1& \begin{aligned}
			u ~&=&\hspace{-2ex} -0.5\phantom{00} &- 0.866\ii  \rule{0em}{1.1em} \\
			u ~&=&\hspace{-2ex} -0.5\phantom{00} &+ 0.866 \ii \\
		\end{aligned} & +&
		\begin{aligned}
			& +  4.05977\ii \rule{0em}{1.1em}\\
			&-4.05977 \ii\\
		\end{aligned}&
		\begin{aligned}
			& -6.92820 \ii \rule{0em}{1.1em}\\
			&+6.92820 \ii\\
		\end{aligned}\\
		\hline

		\Xpar^2& \begin{aligned}
			v ~&=&\hspace{-2ex} 1   \rule{0em}{1.1em} \\
		\end{aligned} &-&
		\begin{aligned}
			-1.64493 \hspace{11.5ex} \rule{0em}{1.1em}
		\end{aligned}&
		\begin{aligned}
			-6  \hspace{11.5ex}\rule{0em}{1.1em}
		\end{aligned}\\
		\hline
		
		\Xpar^3&
		\begin{aligned}
			u ~&=&\hspace{-2ex} 1  \rule{0em}{1.1em} \\
		\end{aligned}
		&-&
		\begin{aligned}
			-1.64493 \hspace{11.5ex} \rule{0em}{1.1em}
		\end{aligned}&
		\begin{aligned}
			-6  \hspace{11.5ex}\rule{0em}{1.1em}
		\end{aligned}\\
		\hline
		
		\Xpar^4&  \begin{aligned}
			u ~&=&\hspace{-2ex} -1  \rule{0em}{1.1em} \\
		\end{aligned} &-&
		\begin{aligned}
			1.64493 \hspace{11.5ex} \rule{0em}{1.1em}
		\end{aligned}&
		\begin{aligned}
			6  \hspace{11.5ex}\rule{0em}{1.1em}
		\end{aligned}\\
		\hline
		
		\Xpar^5&  \begin{aligned}
			u ~&=&\hspace{-2ex} -1  \rule{0em}{1.1em} \\
		\end{aligned} &-&
		\begin{aligned}
			1.64493 \hspace{11.5ex} \rule{0em}{1.1em}
		\end{aligned}&
		\begin{aligned}
			6  \hspace{11.5ex}\rule{0em}{1.1em}
		\end{aligned}\\
		\hline
		
		\begin{aligned}\Xpar^6 \\ {\scriptscriptstyle (geom)}
		\end{aligned}	
		& 	\begin{aligned}
			u ~&=&\hspace{-2ex} -0.883 &  \rule{0em}{1.3em} \\
			u ~&=&\hspace{-2ex} 0.468 & \\
			u ~&=&\hspace{-2ex} 1.207&- 0.978 \ii \\
			u ~&=&\hspace{-2ex} 1.207&+ 0.978 \ii \\
		\end{aligned} &-&
		\begin{aligned}
			1.71901 & \rule{0em}{1.1em}\\
			-1.71901 &  \\
			& - 12.3509 \ii\\ 
			& +12.3509 \ii\\
		\end{aligned} &
		\begin{aligned}
			5.40877 & \rule{0em}{1.1em}\\
			-5.40877 &  \\
			& +7.82655 \ii\\ 
			& -7.82655 \ii\\
		\end{aligned}\\
		\hline

		\Xpar^7 & \begin{aligned}
			b ~&=&\hspace{-2ex} -0.621 &- 0.187\ii   \rule{0em}{1.1em} \\
			b ~&=&\hspace{-2ex} -0.621 &+ 0.187 \ii  
		 \\
			b ~&=&\hspace{-2ex} 1.121&- 1.053 \ii
		 \\
			b ~&=&\hspace{-2ex} 1.121&+ 1.053 \ii
		 \\
		\end{aligned} &-&
		\begin{aligned}
			-1.64493 &- 4.05977 \ii \rule{0em}{1.1em}\\
			-1.64493 &+4.05977 \ii \\
			-1.64493 &+4.05977 \ii\\ 
			-1.64493 &- 4.05977 \ii\\
		\end{aligned}
		&
		\begin{aligned}
			-6& + 6.92820 \ii \rule{0em}{1.1em}\\
			-6& - 6.92820 \ii \\
			-6& - 6.92820 \ii\\ 
			-6& + 6.92820 \ii \\
		\end{aligned}\\
		\hline
		
		\Xpar^8 &\begin{aligned}
			u ~&=&\hspace{-2ex}\phantom{-} 0.648&- 1.498 \ii \rule{0em}{1.3em} \\
			u ~&=&\hspace{-2ex} 	0.648&+ 1.498 \ii \\
			u ~&=&\hspace{-2ex}  	0.851&- 0.632 \ii\\
		u ~&=&\hspace{-2ex}  	0.851&+ 0.632 \ii\\
		\end{aligned} &-&
		\begin{aligned}
			-1.64493 &+ 4.05977 \ii \rule{0em}{1.3em}\\
			-1.64493 &- 4.05977 \ii\\
			-1.64493 &- 4.05977 \ii\\ 
			-1.64493 &+ 4.05977 \ii\\
		\end{aligned}&
		\begin{aligned}
			-6& - 6.92820 \ii \rule{0em}{1.3em}\\
			-6& + 6.92820 \ii \\
			- 6& + 6.92820 \ii\\ 
			- 6& - 6.92820 \ii\\
		\end{aligned}\\
		\hline
		
		\Xpar^9&\begin{aligned}
			u ~&=&\hspace{-2ex} -1.121 &- 1.053 \ii \rule{0em}{1.3em} \\
			u ~&=&\hspace{-2ex} -1.121 &+ 1.053 \ii \\
			u ~&=&\hspace{-2ex} 0.621 &- 0.187 \ii \\
			u ~&=&\hspace{-2ex} 0.621 &+ 0.187 \ii \\
		\end{aligned} &-&
		\begin{aligned}
			1.64493 &+ 4.05977 \ii \rule{0em}{1.3em}\\
			1.64493 &- 4.05977 \ii \\
			1.64493 &- 4.05977 \ii\\ 
			1.64493 &+ 4.05977 \ii\\
		\end{aligned}&
		\begin{aligned}
			6& - 6.92820 \ii \rule{0em}{1.3em}\\
			6& + 6.92820 \ii \\
			6& + 6.92820 \ii\\ 
			6& - 6.92820 \ii\\
		\end{aligned}\\
		\hline

		\Xpar^{10}&\begin{aligned}
			u ~&=&\hspace{-2ex} -1.121 &- 1.053 \ii \rule{0em}{1.3em} \\
			u ~&=&\hspace{-2ex} -1.121 &+ 1.053 \ii \\
			u ~&=&\hspace{-2ex} 0.621 &- 0.187 \ii \\
			u ~&=&\hspace{-2ex} 0.621 &+ 0.187 \ii \\
		\end{aligned} &-&
		\begin{aligned}
			1.64493 &+ 4.05977 \ii \rule{0em}{1.3em}\\
			1.64493 &- 4.05977 \ii \\
			1.64493 &- 4.05977 \ii\\ 
			1.64493 &+ 4.05977 \ii\\
		\end{aligned}&
		\begin{aligned}
			6& - 6.92820 \ii \rule{0em}{1.3em}\\
			6& + 6.92820 \ii \\
			6& + 6.92820 \ii\\ 
			6& - 6.92820 \ii\\
		\end{aligned}\\
		\hline

	\end{array}
	$$
	\caption{Complex volume and Cusp shape for each representation $\rho\in\Xpar(8_{18})$, where $\lambda$ is the obstruction class.}
	\label{table:8_18vol}
\end{figure}
To specify each representation in $\Xpar^i$ in Figure \ref{table:8_18vol},  the second column of the table indicates the numerical value of $u$. For $\Xpar^8$, the values of $b$ instead of $u$ are given because $u$ does not indicate a single representation in $\Xpar^8$.
\begin{rmk}
	The  $\Xpar^6$ is the geometric component  containing a discrete faithful representation $\rho_{geo}$ since the maximal hyperbolic volume $12.3509...$ appears in the component. We can verify that the Chern-Simons invariant of $\rho_{geo}$ is zero and it should be so, since $8_{18}$ is an amphichiral knot.
	Note that 20 non-geometric representations have non-trivial Chern-Simons invariants all of them being $\pm 1.64493...=\pm\pi^2/6$ which is the Chern-Simons invariant of $3_1$. We can observe that,  except in the geometric component $\Xpar^6$, all complex volumes of $\Xpar(8_{18})$ are of the form $\pm 2 \vol(4_1)\pm   \ii{\cs}(3_1)$. This phenomenon is also found in the non-geometric component of $\Xpar(7_4)$, see Remark \ref{rmk:7_4}.

\end{rmk}

Let us say that two representations $\rho$ and $\rho'$ are \emph{essentially equivalent} if $\rho=\rho'\circ \sigma_*$ for an orientation preserving homeomorphism $\sigma$ of $S^3\setminus K$ and its induced knot group automorphism $\sigma_*:\Gk \to \Gk$. Remark  that  the induced full-back map  $\sigma^*: \Xpar \to \Xpar$ preserves the complex volumes, i.e.  $\cv(\rho)=\cv(\sigma^*(\rho)))$, since it is determined by the group homological fundamental class.  
The 26 representations in $\Xpar$ are classified by the essential equivalence and they are completely determined by the complex volumes as follows.
\begin{thm}\label{thm:818essential}
	For $8_{18}$, there are exactly 12 classes of essentially equivalent representations in $\Xpar$. In particular, the complex volumes completely classify the parabolic representations up to essential  equivalence.
\end{thm}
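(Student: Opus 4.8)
The plan is to prove the statement as the conjunction of two halves: distinct complex volumes force essential inequivalence, and equal complex volumes force essential equivalence. Combined with the observation that the $26$ representations realize exactly $12$ complex volumes, these two halves give both the count and the classification.

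First I would record the easy half. By the Remark preceding the theorem, the pull-back $\sigma^{*}$ of an orientation-preserving self-homeomorphism of $S^{3}\setminus K$ preserves the complex volume $\cv=\vol+\ii\cs$, so $\cv$ is constant on each essential-equivalence class and descends to a map on the set of classes. Reading the values of $\ii(\vol+\ii\cs)$ off Figure \ref{table:8_18vol}, the $26$ representations take exactly the $12$ pairwise-distinct values $\pm 4.05977\ii$ (from $\Xpar^{1}$), $\pm 1.64493$ (from $\Xpar^{2}\cup\Xpar^{3}$ and from $\Xpar^{4}\cup\Xpar^{5}$), $\pm 1.71901$ and $\pm 12.3509\ii$ (the four points of the geometric component $\Xpar^{6}$), and $-1.64493\pm 4.05977\ii$, $\,1.64493\pm 4.05977\ii$ (from $\Xpar^{7}\cup\Xpar^{8}$ and from $\Xpar^{9}\cup\Xpar^{10}$). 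It is essential here that only \emph{orientation-preserving} homeomorphisms are admitted, since an orientation-reversing one would conjugate $\cv$ and collapse the pairs $\pm\cv$; this is precisely what keeps the two points of $\Xpar^{1}$, and the four points of $\Xpar^{6}$, in distinct classes. Hence there are \emph{at least} $12$ classes, and it remains only to show that two representations with equal complex volume are essentially equivalent.

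For the converse I would exhibit the identifications through the diagrammatic symmetries of the $8_{18}$ diagram of Figure \ref{fig:8_18fig}. The diagram carries a dihedral symmetry group generated by the quarter-turn $\cc_{i}\mapsto\cc_{i+2}$ and a reflection; each such symmetry $\sigma$ induces a knot-group automorphism $\sigma_{*}:\Gk\to\Gk$ sending a crossing loop to a crossing loop, hence a permutation $\sigma^{*}$ of $\Xpar$. By Theorem \ref{thm:homomorfactor}, $\sigma^{*}$ carries the irreducible Riley factor of a component at the crossing $\cc$ to its factor at $\sigma(\cc)$, so comparing rows and columns of Figures \ref{table:8_18Riley} and \ref{table:8_18upoly} reads off the induced permutation of components. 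In this way one checks that the symmetry group acts transitively on each of $\{\Xpar^{2},\Xpar^{3}\}$, $\{\Xpar^{4},\Xpar^{5}\}$, $\{\Xpar^{7},\Xpar^{8}\}$, and $\{\Xpar^{9},\Xpar^{10}\}$, fixes $\Xpar^{1}$ and $\Xpar^{6}$ setwise, and moreover realizes the non-trivial permutations \emph{within} the four-point components needed to connect the two characters of a common complex volume inside $\Xpar^{7}$ (and likewise $\Xpar^{8},\Xpar^{9},\Xpar^{10}$). Since $\sigma^{*}$ preserves $\cv$, every orbit lies in a single complex-volume fibre; the transitivity just described shows conversely that each of the $12$ fibres is one orbit, hence one essential-equivalence class, so the classes are exactly the $12$ fibres and both assertions follow.

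The main obstacle I anticipate is the within-component transitivity in the previous paragraph: Theorem \ref{thm:homomorfactor} only tracks how the symmetries permute the Riley factors, i.e.\ the components, and does not by itself separate the individual characters sharing a component and a complex volume. One therefore has to follow the actual arc-colorings (equivalently the crossing $u$-values) under $\sigma_{*}$ to verify that a symmetry fixing, say, $\Xpar^{7}$ setwise genuinely interchanges its two representations of complex volume $-1.64493-4.05977\ii$, and to confirm that the diagram symmetry group is large enough to produce the full four-point orbits. Determining the order and structure of this group, cross-checked against the SnapPy computation of $\mathrm{Isom}(S^{3}\setminus 8_{18})$, together with Mostow rigidity forcing the geometric characters of $\Xpar^{6}$ to be symmetry-fixed points, is what closes this gap.
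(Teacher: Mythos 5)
Your proposal follows essentially the same route as the paper's proof: complex volume is constant on essential-equivalence classes and takes exactly $12$ distinct values on the $26$ representations, which gives the lower bound, while the orientation-preserving quarter-turn $\cc_i\mapsto\cc_{i+2}$ combined with Theorem \ref{thm:homomorfactor} and the tables of Riley factors yields the identifications $\Xpar^2\sim\Xpar^3$, $\Xpar^4\sim\Xpar^5$, $\Xpar^7\sim\Xpar^8$, $\Xpar^9\sim\Xpar^{10}$. The only point where you go beyond the paper is the ``obstacle'' you flag, and it is a real one that the paper's own proof leaves implicit: establishing $\sigma^*(\Xpar^7)=\Xpar^8$ at the level of components shows that each representation of $\Xpar^7$ is equivalent to one of $\Xpar^8$ with the same complex volume, but not that the two representations of $\Xpar^7$ sharing a complex volume are equivalent to each other, and your remarks that this must be checked on the actual characters (e.g.\ via the action of $(\sigma^*)^2$ or the $u$-values) and that the reflection $\tau$ cannot be used here --- being orientation-reversing, it sends $\Xpar^7$ to $\Xpar^{10}$ and does not preserve the complex volume --- are both correct.
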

\begin{proof}
	In $\Xpar^1$ and $\Xpar^6$, all complex volumes are distinct and separate from the other representations.  
	Let us consider an automorphism $\sigma_*:\Gk \to \Gk$ given by that the crossing labels changed by  $i\to i+2 \mod 8$, which comes from nothing but the $\frac{\pi}{2}$-counterclockwise rotation $\sigma$ of the diagram. 
	$\sigma^*(\Xpar^2)$ should coincide with some irreducible component of $\Xpar$. The automorphism  $\sigma$ sends the crossing label $\cc_1$ to $\cc_3$, hence  the Riley polynomial of $\Xpar^2$ at $\cc_1$ is $y$ and $\sigma^*(\Xpar^2)=\Xpar^3$, since only $\Xpar^3$ has the same Riley polynomial $y$ at $\cc_3$. Therefore two representations in $\Xpar^2$ and $\Xpar^3$ are essentially equivalent. 
	The same holds true for  $\Xpar^4$ and $\Xpar^5$.
	
	Similarly, we obtain $\sigma^*(\Xpar^7)=\Xpar^8$ and $\sigma^*(\Xpar^9)=\Xpar^{10}$ by checking the Riley polynomials. 
\end{proof}


Finally, we analyze the structure of $\Xpar$ concerning  diagrammatic symmetry.
Let us consider two automorphism $\sigma$ and $\tau$ of $\Gk$ from the diagrammatic symmetry. The $\sigma$ is given by that the crossing labels changed by  $i\to i+2 \mod 8$, i.e., $\frac{\pi}{2}$-counterclockwise rotation of the diagram. The $\tau$ is given by that the crossing labels exchanged by $2i\leftrightarrow 2i+1$, where outer crossings $\{2,4,6,8\}$ transformed into inner crossings $\{3,5,7,1\}$ by an  isotopy and reflection. 

Note that the  $\sigma$ and $\tau$ induce automorphisms $\sigma^*$ and $\tau^*$ of $\Xpar$ and also 
we can analyze these actions on $\Xpar$ as follows.

\begin{thm}
	\begin{enumerate}
		\item $\sigma^*\circ\sigma^*=\tau^*\circ\tau^*=id$
		\item $\Xpar^i = \sigma^*(\Xpar^i)=\tau^*(\Xpar^i)$ for $i=1,6$	
		\item $\sigma^*:(\Xpar^2, \Xpar^4,
		\Xpar^7, \Xpar^9) \leftrightarrow 
		(\Xpar^3, \Xpar^5,
		\Xpar^8, \Xpar^{10})$
		\item $\tau^*:(\Xpar^2, \Xpar^3,
		\Xpar^7, \Xpar^8) \leftrightarrow 
		(\Xpar^5, \Xpar^4,
		\Xpar^{10}, \Xpar^9)$.
		
	\end{enumerate}
\end{thm}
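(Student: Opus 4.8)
The plan is to treat $\sigma^*$ and $\tau^*$ as $\Qbb$-automorphisms of the finite scheme $\Xpar$ and to pin down how they permute its irreducible $\Qbb$-components $\Xpar^1,\dots,\Xpar^{10}$ using the already-tabulated Riley polynomials. The core observation is a transformation rule for $\yv$-values: since $\sigma_*$ sends the crossing loop $\alpha_{\cc_i}$ to $\alpha_{\cc_{i+2}}$ and $\tau_*$ sends $\alpha_{\cc_i}$ to $\alpha_{\cc_{\tau(i)}}$ with $\tau=(1\,8)(2\,3)(4\,5)(6\,7)$, the identity $\yv_{\cc_i}(\rho\circ\sigma_*)=\yv_{\cc_{i+2}}(\rho)$ (and its $\tau$-analogue) holds for every $\rho$. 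Consequently, if $\sigma^*(\Xpar^m)=\Xpar^n$ then the partial Riley polynomials satisfy $\Riley^{n}_{\cc_i}(y)=\Riley^{m}_{\cc_{i+2}}(y)$ for all $i$ (indices mod $8$), which is exactly the automorphism case of Theorem \ref{thm:homomorfactor}.

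First I would record that $\sigma^*,\tau^*$ are automorphisms of $\Xpar$ as a $\Qbb$-variety (they are induced by group automorphisms of $\Gk$), hence they permute the set of irreducible $\Qbb$-components. Next, reading off Figure \ref{table:8_18Riley}, I would compute for each $m$ the cyclically shifted Riley-tuple $(\Riley^{m}_{\cc_{i+2}})_{i=1}^{8}$ (resp. the $\tau$-permuted tuple) and match it against the Riley-tuples of the ten components. Because the ten tuples are pairwise distinct, the target component $\Xpar^n$ is uniquely determined; carrying this out yields $\sigma^*$ fixing $\Xpar^1,\Xpar^6$ and interchanging $(\Xpar^2,\Xpar^4,\Xpar^7,\Xpar^9)\leftrightarrow(\Xpar^3,\Xpar^5,\Xpar^8,\Xpar^{10})$, and $\tau^*$ fixing $\Xpar^1,\Xpar^6$ and realizing the pairing in (4). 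This establishes (2), (3), (4) at once, and in particular shows that $\sigma^{*2}$ and $\tau^{*2}$ fix every component setwise.

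It remains to upgrade ``fixes every component setwise'' to the pointwise identity of (1), and this is the step I expect to be the real obstacle. For $\tau$ the difficulty evaporates: $\tau$ is realized by an involutive symmetry of the diagram (its label permutation is a product of transpositions), so $\tau_*^2$ is inner and $\tau^{*2}=\mathrm{id}$ since inner automorphisms act trivially on $\Xpar$. For $\sigma$, which is an order-$4$ symmetry, $\sigma^{*2}$ corresponds to the $\pi$-rotation $i\mapsto i+4$ and is not obviously trivial, so I would argue componentwise. On $\Xpar^2,\dots,\Xpar^5$ (singletons) and on $\Xpar^6$ (where Figure \ref{table:8_18vol} shows four distinct complex volumes, which $\sigma^*$ preserves) the map $\sigma^{*2}$ is immediately forced to be the identity, while on $\Xpar^1$ (two points) $(\sigma^*|_{\Xpar^1})^2=\mathrm{id}$ holds automatically.

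The genuinely delicate components are $\Xpar^7,\dots,\Xpar^{10}$, which $\sigma^*$ interchanges in pairs, so that $\sigma^{*2}$ maps each to itself but the complex volume is only $2$-to-$1$ there and hence does not separate their points. Here I would use that each $\Xpar^i$ is a single $\mathrm{Gal}(\overline{\Qbb}/\Qbb)$-orbit, i.e. $\mathrm{Spec}$ of a number field $L_i$, so that any $\Qbb$-automorphism fixing $\Xpar^i$ setwise restricts to an element of $\mathrm{Aut}(L_i/\Qbb)$; computing the defining quartics (equivalently the relevant factors of the $u$-polynomials in Figure \ref{table:8_18upoly}) and checking that these fields admit no nontrivial $\Qbb$-automorphism then forces $\sigma^{*2}|_{\Xpar^i}=\mathrm{id}$. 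Assembling these cases gives $\sigma^{*2}=\mathrm{id}$ and completes (1). The main obstacle is precisely this last point, since neither the Riley polynomials nor the complex volume separate the points of $\Xpar^7,\dots,\Xpar^{10}$, so the proof must invoke the Galois-orbit/field-automorphism structure, or, alternatively, exhibit $\sigma_*^2$ directly as an inner automorphism of $\Gk$.
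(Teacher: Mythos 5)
Your proposal is correct, and for parts (2)--(4) it is essentially the paper's argument: the paper's entire proof is the single sentence that everything follows from Theorem \ref{thm:homomorfactor}, i.e., from matching the $\sigma$- and $\tau$-permuted Riley-polynomial tuples of Figure \ref{table:8_18Riley} against the ten pairwise distinct tuples of the components, exactly as you describe. Where you go beyond the paper is part (1). The paper's one-line proof only yields that $\sigma^*$ and $\tau^*$ square to the identity as permutations of the set of irreducible components (which is already immediate once (2)--(4) are established); it does not address whether $\sigma^{*2}$ and $\tau^{*2}$ fix $\Xpar$ pointwise. You correctly isolate this as the delicate point --- the complex volume is $2$-to-$1$ on $\Xpar^7,\dots,\Xpar^{10}$, so neither the Riley data nor Figure \ref{table:8_18vol} separates their points --- and you supply a genuine remedy via the Galois-orbit structure: each such $\Xpar^i$ is a single $\mathrm{Gal}(\overline{\Qbb}/\Qbb)$-orbit, hence $\mathrm{Spec}$ of a number field, so a $\Qbb$-automorphism fixing it setwise restricts to a field automorphism over $\Qbb$, which can be checked to be trivial for the relevant quartic fields. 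This buys a pointwise statement the paper never actually proves; if the authors intend (1) only at the level of components your extra work is unnecessary but harmless, and if they intend it pointwise your argument (or alternatively exhibiting $\sigma_*^2$ as an inner automorphism of $\Gk$) is what is genuinely needed.
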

\begin{proof}
	All of these arguments are easily derived by   Theorem \ref{thm:homomorfactor}. 
\end{proof}

\bibliographystyle{amsalpha}
\bibliography{bibliog}
\end{document}